\newtheorem{theorem}{Theorem} [section]
\newtheorem{maintheorem}{Theorem}
\newtheorem{lemma}[theorem]{Lemma}
\newtheorem{proposition}[theorem]{Proposition}
\newtheorem{remark}[theorem]{Remark} 
\newtheorem{definition}{Definition}
\newtheorem{corollary}[theorem]{Corollary}
\DeclareMathOperator*{\intt}{\int}
\DeclareMathOperator{\MAX}{MAX}
\newcommand{\noi}{\noindent}
\newcommand{\Z}{\mathbb{Z}}
\newcommand{\R}{\mathbb{R}}
\newcommand{\T}{\mathbb{T}}
\newcommand{\al}{\alpha}
\newcommand{\dl}{\delta}
\newcommand{\Dl}{\Delta}
\newcommand{\eps}{\varepsilon}
\newcommand{\g}{\gamma}
\newcommand{\G}{\Gamma}
\newcommand{\ld}{\lambda}
\newcommand{\s}{\sigma}
\newcommand{\ft}{\widehat}
\newcommand{\wt}{\widetilde}
\newcommand{\cj}{\overline}
\newcommand{\dx}{\partial_x}
\newcommand{\I}{\hspace{0.5mm}\text{I}\hspace{0.5mm}}
\newcommand{\II}{\text{I \hspace{-2.8mm} I} }
\newcommand{\III}{\text{I \hspace{-2.9mm} I \hspace{-2.9mm} I}}
\newcommand{\jb}[1]
{\langle #1 \rangle}
\begin{document}

\title
[ Invariance of Gibbs measure  for KdV Systems]
{\bf  Invariant Gibbs Measures and a.s. Global Well-Posedness for Coupled KdV Systems}

\author{Tadahiro Oh}

\address{Tadahiro Oh\\
Department of Mathematics\\
University of Toronto\\
40 St. George St, Rm 6290,
Toronto, ON M5S 2E4, Canada}

\email{oh@math.toronto.edu}


\subjclass[2000]{ 35Q53}

\keywords{KdV; well-posedness; Gibbs measure; Diophantine Conditions}

\begin{abstract}
We continue our study of the well-posedness theory of  a one-parameter family of  coupled KdV-type systems in the periodic setting.
When the value of a coupling parameter $\al \in (0, 4) \setminus \{1\}$, we show that
the Gibbs measure is invariant under the flow and 
the system is globally well-posed almost surely on the statistical ensemble,
provided that certain Diophantine conditions are satisfied.
\end{abstract}

\maketitle

\section{Introduction}

In this paper, we  consider coupled KdV systems of the form:
\begin{equation} \label{KDVsystem1}
\begin{cases}
u_t + a_{11} u_{xxx} + a_{12} v_{xxx} +  b_1 u u_x + b_2 u v_x + b_3 u_x v + b_4 v v_x = 0 \\ 
v_t + a_{21} u_{xxx} + a_{22} v_{xxx} + b_5 u u_x + b_6 u v_x + b_7 u_x v + b_8 v v_x  = 0 \\
(u, v) \big|_{t = 0} = (u_0, v_0)  
\end{cases}
\end{equation}

\noindent
in the periodic setting, where $A = \bigl(\begin{smallmatrix}
a_{11} & a_{12} \\ 
a_{21} & a_{22}
\end{smallmatrix} \bigr)$
is self-adjoint, and $u$ and $v$ are real-valued functions.
There are several systems of this type: the Gear-Grimshaw system \cite{GG}, the Hirota-Satsuma system \cite{HS}, 
the Majda-Biello system \cite{MB}, etc.
By applying the space-time scale changes along with the diagonalization of $A$, one can reduce \eqref{KDVsystem1} to 
\begin{equation} \label{KDVsystem2}
\begin{cases}
u_t +  u_{xxx} + \wt{b_1} u u_x + \wt{b_2} u v_x + \wt{b_3} u_x v + \wt{b_4} v v_x  = 0 \\ 
v_t + \al v_{xxx} + \wt{b_5} u u_x + \wt{b_6} u v_x + \wt{b_7} u_x v + \wt{b_8} v v_x  = 0 \\
(u, v) \big|_{t = 0} = (u_0, v_0),  
\end{cases}
\end{equation}

\noindent 
where $\al \ne 0$, 
$(x, t) \in \T\times\R$ with $\T = [0, 2\pi)$. 

In this paper, we assume that 
\eqref{KDVsystem2} has a Hamiltonian $H$ of the form 
``$H (u, v) = \frac{1}{2} \int u_x^2 + \al v_x^2+$nonlinear terms"
and that both $H$ and $N(u, v)  = \frac{1}{2} \int u^2 + b v^2$, for some $b > 0$, are conserved under the flow of \eqref{KDVsystem2}. 
(Note that this is the case for the Gear-Grimshaw and the Majda-Biello systems among other coupled KdV systems.)
When $\al \ne 1$, it is shown in \cite{OH1} that 
there is an interval $I_0$ around $\al = 1$ such that particular resonances occur for $\al \in I_0 \setminus \{1\}$
which are not present when $\al = 1$.
We show that,  for $\al \in I_0 \setminus \{1\}$, 
 the Gibbs measure $d \mu = Z^{-1} \exp(-\beta H(u, v) ) \prod_{x\in \T} d u(x) \otimes dv(x)$ is invariant under the flow \eqref{KDVsystem2}
and that \eqref{KDVsystem2} is globally well-posed almost surely on the statistical ensemble,
provided that certain Diophantine conditions are satisfied.

As a model example, we consider the Majda-Biello system:
\begin{equation} \label{MB}
\begin{cases}
u_t + u_{xxx} + vv_x = 0\\
v_t + \al v_{xxx} + (uv)_x = 0 \\
(u, v) \big|_{t = 0} = (u_0, v_0),  
\end{cases}
 \ 
(x, t) \in \T\times\R,
\end{equation}

\noindent
where $ \T = [0, 2\pi)$, $0< \alpha \leq 4$, and $u$ and $v$ are real-valued functions.
This system has been proposed by Majda and Biello \cite{MB} 
as a reduced asymptotic model to study the nonlinear resonant interactions of 
long wavelength equatorial Rossby waves and barotropic Rossby waves with a significant mid-latitude projection,
in the presence of suitable horizontally and vertically sheared zonal mean flows.
In \cite{MB},  the values of $\al $ are numerically determined and they are
$0.899$, $0.960$, and $0.980$ for different equatorial Rossby waves.
Of particular interest to us is the periodic case because of its challenging mathematical nature
as well as its physical relevance of the proposed model (the spatial period for the system before scaling is set as $40, 000$ km in \cite{MB}.)

Several conservation laws are known for the system:
\begin{equation} \label{Wconserved}
 E_1 = \int u \, dx, \ E_2 = \int v \, dx, \ N(u, v)  = \frac{1}{2}\int u^2 + v^2 dx, \ H(u, v)  =\frac{1}{2} \int u_x^2 + \al v_x^2 - u v^2 dx, 
\end{equation}

\noindent
where $H(u, v)$ is the Hamiltonian of the system.
There seems to be no other conservation law, suggesting that the Majda-Biello system may not be completely integrable.
The system has scaling which is similar to that of KdV and the critical Sobolev index $s_c$ is $-\frac{3}{2}$ just like KdV.

First, we review the local and global well-posedness (LWP and GWP) results of \eqref{MB} from \cite{OH1}, \cite{OH2}.
Note that all the results,  except for the global well-posedness on $\T$ for $\al \in (0, 1) \cup (1, 4)$,  are sharp in the sense that 
the smoothness/uniform continuity of the solution map fails below the specified regularities.
When $\al =1$, we showed in \cite{OH2} that \eqref{MB} is globally well-posed  
in $H^{-\frac{1}{2}}(\T) \times H^{-\frac{1}{2}}(\T)$
without the mean 0 condition on the initial data,
via the $I$-method developed by Colliander-Keel-Staffilani-Takaoka-Tao \cite{CKSTT4}
in the vector-valued variants $X^{s, b}_{p, q}$ of the Bourgain space $X^{s, b}$ \cite{BO1}.

Now, let's turn to the case $\al \in (0, 1) \cup (1, 4]$.
In this case, we have two distinct linear semigroups 
$S(t)= e^{ -t \dx^3}$ and $S_\al(t)= e^{ -\al t \dx^3}$ corresponding to the linear equations for $u$ and $v$.
Thus, we need to define two distinct Bourgain spaces $X^{s, b}$ and $X_\al^{s, b}$ to encompass the situation.
For $s, b \in \mathbb{R}$, let $X^{s, b}(\mathbb{T}\times\mathbb{R})$ and $X_\al^{s, b}(\mathbb{T}\times\mathbb{R})$ be the completion of 
the Schwartz class $\mathcal{S} (\mathbb{T} \times \mathbb{R})$ with respect to the norms
\begin{align} \label{WXsb1}
& \|u\|_{X^{s, b}(\mathbb{T} \times \mathbb{R})} = \big\| \jb{n}^s \jb{\tau -n^3}^b \ft{u}(n, \tau) \big\|_{L^2_{n, \tau}(\mathbb{Z} \times \mathbb{R})} 
\\ \label{WXsb2}
&  \|v\|_{X_{\al}^{s, b}(\mathbb{T} \times \mathbb{R})} = \big\| \jb{n}^s \jb{\tau - \al n^3}^b \ft{v}(n, \tau) \big\|_{L^2_{n, \tau}(\mathbb{Z} \times \mathbb{R})} ,
\end{align}

\noindent
where $\jb{\, \cdot \, } = 1 + |\cdot|$.
Then, two of the crucial bilinear estimates in establishing the LWP of \eqref{MB} are:
\begin{align} \label{bilinear1}
\| \dx (v_1 v_2)  \|_{X^{s, -\frac{1}{2}} (\mathbb{T} \times \mathbb{R} )} & \lesssim \|v_1\|_{X_\al^{s, \frac{1}{2}}(\mathbb{T} \times \mathbb{R} )}
\|v_2\|_{X_\al^{s, \frac{1}{2}}(\mathbb{T} \times \mathbb{R} ).} 
\\
\label{bilinear2}
\| \dx (u v)  \|_{X_\al^{s, -\frac{1}{2}} (\mathbb{T} \times \mathbb{R} )} & \lesssim \|u\|_{X^{s, \frac{1}{2}}(\mathbb{T} \times \mathbb{R} )}
\|v\|_{X_\al^{s, \frac{1}{2}}(\mathbb{T} \times \mathbb{R} ).} 
\end{align}

\noindent
First, consider the first bilinear estimate \eqref{bilinear1}.  
As in Kenig-Ponce-Vega  \cite{KPV4},  we define the bilinear operator $\mathcal{B}_{s, b} (\cdot, \cdot)$ by
\[ \mathcal{B}_{s, b} (f, g) (n, \tau) =  \frac{n \jb{n}^s }{\jb{\tau - n^3}^{\frac{1}{2}}}\frac{1}{2\pi}
\sum_{n_1+ n_2 = n} \intt_{\tau_1 + \tau_2 = \tau} \frac{f(n_1, \tau_1) g(n_2, \tau_2)}{\jb{n_1}^s 
\jb{n_2}^s \jb{\tau_1 - \al n_1^3}^\frac{1}{2}  \jb{\tau_2 - \al n_2^3}^\frac{1}{2}} d\tau_1.
\]
Then, \eqref{bilinear1} holds if and only if
$ \left\| \mathcal{B}_{s,b}(f, g) \right\|_{L^2_{n, \tau}} \lesssim \| f \|_{L^2_{n, \tau}} \| g \|_{L^2_{n, \tau}} . $
As in the KdV case,  $\dx$ appears on the left hand side of \eqref{bilinear1} and thus 
we need to make up for this loss of derivative from 
$\jb{\tau - n^3}^{\frac{1}{2}} \jb{\tau_1 - \al n_1^3}^\frac{1}{2}  \jb{\tau_2 - \al n_2^3}^\frac{1}{2}$ in the denominator.
Recall that we basically gain $\frac{3}{2}$ derivatives in the KdV case (with $n, n_1, n_2 \ne 0$) 
thanks to the algebraic identity
\begin{equation} \label{P1algebra}
n^3 - n_1^3 - n_2^3 = 3 n n_1n_2
\end{equation}

\noindent
for $n = n_1 + n_2$.
However, when $\al \ne 1$, we no longer have such an identity and 
we have
\begin{align} \label{resonance1}
\max \big( & \jb{\tau - n^3},    \jb{\tau_1 - \al n_1^3},   \jb{\tau_2 - \al n_2^3} \big)
\sim \jb{\tau - n^3}+  \jb{\tau_1 - \al n_1^3}+   \jb{\tau_2 - \al n_2^3} 
  \notag \\
& \gtrsim \big| (\tau - n^3) - (\tau_1 - \al n_1^3) - (\tau_2 - \al n_2^3) \big|  
 = | n^3 -  \al n_1^3 - \al n_2^3 | ,  
\end{align}
where $n = n_1 + n_2$ and  $\tau = \tau_1 + \tau_2$.
Note that the last expression in \eqref{resonance1} can be 0 for infinitely many (nonzero) values of $n, \: n_1,$ and $ n_2$,
causing resonances. 
By solving  the resonance equation:
\begin{equation} \label{JJreseq1}
n^3 - \al n_1^3 - \al n_2^3 = 0 \text{ with }  n = n_1 + n_2, 
\end{equation} 
we have $( n_1, n_2) = (c_1 n, c_2 n)$ or $(c_2 n, c_1 n)$, where 
\begin{equation} \label{c_1} 
c_1 = \tfrac{1}{2} + \tfrac{\sqrt{-3 + 12 \al^{-1}}}{6} \ \text{ and } \
c_2 = \tfrac{1}{2} - \tfrac{\sqrt{-3 + 12 \al^{-1}}}{6}.
\end{equation}
Note that $c_1 + c_2 = 1$ and that $c_1, c_2 \in \R$ if and only if $ 0 < \al \leq 4$. 
If $c_1 \in \mathbb{Q}$ (and thus $c_2 \in \mathbb{Q}$), then there are infinitely many values of $n \in \mathbb{Z}$ 
such that  $c_1n, \: c_2 n \in \mathbb{Z}$.  This causes resonances for infinitely many values of $n$, and 
thus we do not have any gain of derivative from $\jb{\tau - n^3} \jb{\tau_1 - \al n_1^3}  \jb{\tau_2 - \al n_2^3}  $ in this case.

If $c_1 \in \mathbb{R} \setminus \mathbb{Q}$,  then $c_1 n\notin \mathbb{Z}$ for any $n \in \mathbb{Z}$.
i.e. $n - \al n_1^3 - \al n_2^3 \ne 0 $ for any $n,  n_1, n_2 \in \mathbb{Z}$. 
However,  generally speaking, $n - \al n_1^3 - \al n_2^3$ can be  arbitrarily close to 0,
since $c_1 n$ can be arbitrarily close to an integer.
Therefore, we need  to measure {\it how ``close" $c_1$ is to rational numbers}.
In \cite{OH1}, we  used  the following definition regarding the Diophantine conditions commonly used in dynamical systems.

\begin{definition} [{Arnold \cite{AR}}]
A real number $\rho$ is called of type $(K, \nu)$ \textup{(}or simply of type ${\nu}$\textup{)} 
if there exist positive $K$ and $\nu$ such that for all pairs of integers $(m, n)$, we have
\begin{equation} \label{lowerbd}
\left| \rho - \frac{m}{n} \right| \geq \frac{K}{ |n|^{2+\nu}} .
\end{equation}
\end{definition}

\noindent
Also, for our purpose, we defined {\it the minimal type index} of a given real number $\rho$.
\begin{definition}
Given a real number $\rho$, define the minimal type index ${\nu_{\rho}}$ of ${\rho}$ by
\[ \nu_{\rho} = \begin{cases}
\infty \text{, if } \rho \in \mathbb{Q} \\
\inf \{ \nu > 0 : \rho \text{ is of type } \nu \} \text{, if } \rho \notin \mathbb{Q} .
\end{cases} \]
\end{definition}

\begin{remark} \label{JJDIO} \rm
Then, by Dirichlet Theorem {\cite[p.112]{AR}} and {\cite[p.116, lemma 3]{AR}},
it follows that $\nu_\rho \geq 0$ for   any $\rho \in \mathbb{R}$ and  $\nu_\rho = 0$ 
for almost every  $\rho \in \mathbb{R}$. 
\end{remark}

Using the minimal type index $\nu_{c_1}$ of $c_1$, for any $\eps >0$, we have
\begin{equation}\label{lowerbdC}
| n^3 -  \al n_1^3 - \al n_2^3 | \gtrsim |n|^{1-\nu_{c_1} -\eps}
\end{equation}
for all sufficiently large $n \in \mathbb{Z}$, which provides a good lower bound on \eqref{resonance1}.
With \eqref{lowerbdC}, we proved that \eqref{bilinear1} holds for $s > \frac{1}{2} + \frac{1}{2} \nu_{c_1}$.

The resonance equation of the second bilinear estimate \eqref{bilinear2}
 is given by
\begin{equation} \label{resonance2}
 \al n^3 - n_1^3 - \al n_2^3 = 0 \text{ with } n = n_1 + n_2.
\end{equation}

\noindent By solving \eqref{resonance2}, we obtain
$(n_1, n_2) = \big(d_1 n, (1-d_1) n\big),  \big(d_2 n, (1-d_2) n\big), (0, n)$, where 
\begin{equation} \label{d_1 and d_2}
d_1 = \tfrac{- 3 \al + \sqrt{3 \al(4 - \al)}}{2 (1 - \al)} \ 
\text{ and } \ d_2 = \tfrac{- 3 \al - \sqrt{3 \al(4 - \al)}}{2 (1 - \al)}.
\end{equation}
Note that $d_1, d_2 \in \mathbb{R}$ if and only if $\al \in [0, 1) \cup (1, 4]$.
Then, 
for any $\eps >0$, we have
\begin{equation}\label{lowerbdD}
|  \al n^3 - n_1^3 - \al n_2^3 | \gtrsim |n|^{1-\max( \nu_{d_1}, \nu_{d_2}) -\eps}
\end{equation}
for all sufficiently large $n \in \mathbb{Z}$.
With \eqref{lowerbdD}, we proved that 
\eqref{bilinear2} holds for $s > \frac{1}{2} + \frac{1}{2} \max( \nu_{d_1}, \nu_{d_2})$
with the mean 0 assumption on $u$.
Note that the mean 0 assumption on $u$ is needed 
since $n_1 = 0$ is a solution of \eqref{resonance2} for any $n \in \mathbb{Z}$.
Indeed, it is shown in \cite{OH1} that \eqref{bilinear2} fails for any $s \in \R$ without the mean 0 assumption on $u$.

\begin{remark} \label{REM:bilinear} \rm
We point out that the bilinear estimates \eqref{bilinear1} and \eqref{bilinear2} hold for $s \geq 0$ 
away from the resonance sets, i.e.
\eqref{bilinear1} holds for $s \geq 0$ on
$\{ (n, n_1) : |n| \gtrsim 1, |n_1 - c_1 n| \geq 1 \text{ and } |n_1 - c_2 n| \geq 1\}$,
and 
\eqref{bilinear2} holds for $s \geq 0$ on
$\{ (n, n_1) : |n| \gtrsim 1, |n_1 - d_1 n| \geq 1 \text{ and } |n_1 - d_2 n| \geq 1\}$.

\end{remark}

Now, let $s_0 (\al) = \frac{1}{2} + \frac{1}{2} \max( \nu_{c_1}, \nu_{d_1},  \nu_{d_2})$.
Note that $s_0 = \frac{1}{2}$ for almost every $\alpha \in (0, 4]$
in view of Remark \ref{JJDIO}.
In \cite{OH1}, we proved that, for $ \al \in (0, 4] \setminus \{1\}$, 
the Majda-Biello system \eqref{MB}  is locally well-posed in $H^s(\mathbb{T}) \times H^s(\mathbb{T})$
for  $s \geq s^\ast(\al) := \min ( 1,s_0 + ) $, 
assuming  the mean 0 condition on $u_0$.

We'd like to point out the following. 
On the one hand, we have  $s^\ast(\al) = s_0 (\al) =  \frac{1}{2}+$ 
for almost every $\al \in (0, 4] \setminus \{1\}$. 
On the other hand,   for any interval $I \subset (0, 4]$, there exists $\al \in I$ such that $s^\ast(\al) = 1$.
This shows that the well-posedness (below $H^1$) of the periodic Majda-Biello system  
is very unstable under a slight perturbation of the parameter $\al$.

Using the Hamiltonian $H(u, v)$, 
one can easily obtain the GWP of \eqref{MB} in $H^1(\T) \times H^1(\T)$.
This result is sharp when $s^\ast = 1$, 
i.e. when $\max( \nu_{c_1}, \nu_{d_1},  \nu_{d_2}) \geq 1$.
In particular, it is sharp for $\al = 4$, since $c_1 \in \mathbb{Q}$ for $\al = 4$.

When $\max( \nu_{c_1}, \nu_{d_1},  \nu_{d_2}) < 1$, we used the $I$-method
to generate a sequence of modified energies (modified Hamiltonians) $H^{(j)}(u, v) (t)  $.
Using the second modified energy $H^{(2)}$, 
it is shown in \cite{OH2} that 
\eqref{MB} is globally well-posed in $H^s(\T) \times H^s(\T)$
for 
$ s \geq s^{**} := \max \Big( \frac{6 (s_0+) - 2(s_0+)^2}{5 - (s_0+)}, \frac{2(s_0+) + 9}{14}\Big)$.
In particular, we have $s^{**} (\al) = \frac{5}{7} + $ for almost every $\al \in (0, 4] \setminus \{1\}$.
We also showed that the Fourier multiplier for the time derivative of the third modified energy $H^{(3)}$ is unbounded on a nontrivial set.
i.e. the $I$-method fails before the GWP result matches the LWP one.
This shows that the GWP obtained via $H^{(2)}$ is the best possible result using the $I$-method.
Note that there is a gap between $s^*(\al) $ and $s^{**}(\al)$, unless $s^* = 1$.
In particular, $s^* = \frac{1}{2} + > s^{**} = \frac{5}{7}+$ for a.e. $\al \in (0, 4]\setminus \{1\}$.

In an attempt to fill the gap between the LWP and the GWP results, 
we consider the Gibbs measure of the form ``$d\mu = Z^{-1}\exp(-\beta H(u, v) ) \prod_{x\in \T} d u(x) \otimes dv(x)$".
First recall the following; Given a Hamiltonian flow
\begin{equation*}
\begin{cases}
\dot{p}_i = \frac{\partial H}{\partial q_i} \\
\dot{q}_i = - \frac{\partial H}{\partial p_i} 
\end{cases}
\end{equation*}
on $\mathbb{R}^{2n}$ with Hamiltonian $ H = H(p_1, \cdots, p_n, q_1, \cdots, q_n)$,
Liouville's theorem states that the Lebesgue measure on $\mathbb{R}^{2n}$ is invariant under the flow.
From the conservation of the Hamiltonian $H$, the Gibbs measures
$e^{-\beta H} \prod_{i = 1}^{n} dp_i dq_i$ are also invariant,
where $\beta$ is the reciprocal temperature. 
In our context, the Hamiltonian $H(u, v)$ is conserved under the flow of \eqref{MB}.
Then, we'd like to use the invariance of the Gibbs measure $\mu$ 
(which holds true in finite dimensional case)
to prove a GWP result.
At this point, everything is merely formal, which needs to be made rigorous.

In the context of NLS, Lebowitz-Rose-Speer \cite{LRS} considered the Gibbs measure of the form 
$d \mu = \exp (-\beta H(u)) \prod_{x\in \T} d u(x)$ where 
$H(u)$ is the Hamiltonian given by $H(u) = \frac{1}{2} \int |u_x|^2 \pm \frac{1}{p} \int |u|^p dx$.
In the focusing case (with  $-$), $H(u)$ is not bounded from below and this causes a problem.
Using the conservation of the $L^2$ norm,
they instead considered the Gibbs measure of the form 
$d \mu = \exp (-\beta H(u)) \chi_{\{\|u\|_{L^2} \leq B \}} \prod_{x\in \T} d u(x)$, i.e. with an $L^2$-cutoff.
This turned out to be a well-defined measure on $H^{\frac{1}{2}-}(\T) = \bigcap_{s<\frac{1}{2}} H^s(\T)$
(for $p < 6$ with any $B>0$, and $ p = 6$ with sufficiently small $B$.)
Bourgain \cite{BO4} continued this study and 
proved the invariance of $\mu$ under the flow of NLS
and the global well-posedness almost surely on the statistical ensemble.
Note that  \cite{BO4} appeared before the so-called Bourgain's method \cite{BO2} or the $I$-method \cite{CKSTT4},
i.e. there was virtually no method available to establish any GWP result 
from a LWP result whose regularity was between two conservation laws.
This was the case for NLS for $  4< p \leq 6$.
We use this idea to obtain a.s. GWP of the Majda-Biello system \eqref{MB}.
Recently, Burq-Tzvetkov \cite{BT1} independently and simultaneously used 
similar ideas to prove a.s. GWP for 
the nonlinear wave equation on the unit ball in $\mathbb{R}^3$ under the radial symmetry.
Also, see other work by Tzvetkov related to this subject \cite{TZ1}, \cite{TZ2}.

For the Majda-Biello system \eqref{MB}, we have $H(u, v) = \frac{1}{2} \int u_x^2 + \al v_x^2 - u v^2 dx$. 
It is known (c.f. Zhidokov \cite{Z}) that the Wiener measure 
$d\rho = \wt{Z}^{-1} \exp( -\frac{1}{2}\int u_x^2 + \al v_x^2dx) \prod_{x \in \T} d u(x) \otimes dv(x)$
is a well-defined countably additive measure on $H^{\frac{1}{2}-} (\T) \times H^{\frac{1}{2}-} (\T)$.
Since $-\frac{1}{2}\int u v^2 dx$ is not sign-definite, 
we need to add an $L^2$ cutoff in considering the Gibbs measure (weighted Wiener measure) as in \cite{LRS} and \cite{BO4}.
Then, $d\mu = Z^{-1} \exp(\frac{1}{2} \int u v^2 dx) \chi_{\{\| (u, v) \|_{L^2} \leq B \}}d\rho$
is a well-defined countably additive measure on $H^{\frac{1}{2}-} (\T) \times H^{\frac{1}{2}-} (\T)$.
When $\al = 1$, the invariance of the Gibbs measure $\mu$ directly follows
from Bourgain's argument for KdV in \cite{BO4} and the GWP of \eqref{MB} in 
$H^{-\frac{1}{2}} (\T) \times H^{-\frac{1}{2}} (\T)$ obtained in \cite{OH2}.

Now, recall that for $\al \in (0, 4) \setminus\{1\}$, 
the Majda-Biello system \eqref{MB} is LWP in $H^s(\T) \times H^s(\T)$
only for $s > \frac{1}{2} + \frac{1}{2}\max (\nu_{c_1},\nu_{d_1},\nu_{d_2} ) \geq \frac{1}{2}$
and is $C^3$ ill-posed for $s < \frac{1}{2} + \frac{1}{2}\max (\nu_{c_1},\nu_{d_1},\nu_{d_2} )$ 
in the sense that the solution map is not $C^3$  (c.f. \cite{OH1}.)
i.e. the flow of \eqref{MB} is not well-defined on the support of the Gibbs measure $\mu$
in terms of the usual Sobolev spaces.
We instead consider the Cauchy problem for $(u_0, v_0) \in H^{s_1, s_2}(\T) \times H^{s_1, s_2}(\T)$, 
where 
\begin{equation} \label{GibbsIV} 
\| \phi \|_{H^{s_1, s_2}} = \| \phi \|_{H^{s_1}} + \sup_{n} \jb{n}^{s_2} |\ft{\phi}(n)| < \infty 
\end{equation}
\noindent
for some $s_1, s_2$ with $0 < s_1 < \frac{1}{2} < s_2 < 1$ 
(to be determined later.)
First, recall that, as mentioned in Remark \ref{REM:bilinear},  
the bilinear estimates \eqref{bilinear1} and \eqref{bilinear2} 
fail  for $s \leq \frac{1}{2}$ only at 4 frequencies near the resonances.
$\sup_{n} \jb{n}^{s_2} |\ft{\phi}(n)|$ in \eqref{GibbsIV} exactly controls these particular resonances with the higher regularity 
$s_2 > \frac{1}{2} + \frac{1}{2}\max( \nu_{c_1},\nu_{d_1}, \nu_{d_2})$.
We have the following theorem.

\begin{maintheorem} \label{THM:LWP1}
Let $\al \in (0, 4) \setminus\{1\}$ and $\max( \nu_{c_1},\nu_{d_1}, \nu_{d_2}) < 1$. Assume the mean 0 condition on $u_0$.
Let $\frac{1}{4} < s_1 < \frac{1}{2} < s_2 < 1 $ with $2s_1 > s_2 > \frac{1}{2} + \frac{1}{2}\max( \nu_{c_1},\nu_{d_1}, \nu_{d_2}) $.
Then, the Majda-Biello system \eqref{MB} is locally well-posed in $H^{s_1, s_2}(\T) \times H^{s_1, s_2}(\T)$.
\end{maintheorem}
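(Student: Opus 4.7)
The plan is to establish local well-posedness by running a Picard iteration on the Duhamel formulation of \eqref{MB} in hybrid Bourgain-type spaces adapted to the $H^{s_1, s_2}$ norm. Specifically, I would introduce spaces $Y^{s_1, s_2, b}$ and $Y^{s_1, s_2, b}_\al$ whose norms decompose into two pieces: the usual $X^{s_1, b}$ (resp. $X^{s_1, b}_\al$) norm, plus a weighted supremum-in-frequency norm
\[\sup_n \jb{n}^{s_2}\big\|\jb{\tau - n^3}^{b}\ft u(n, \tau)\big\|_{L^2_\tau},\]
(resp. with $\tau - \al n^3$), which at $t=0$ recovers the $\sup_n \jb{n}^{s_2}|\ft\phi(n)|$ part of the $H^{s_1, s_2}$ norm. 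The linear semigroup and Duhamel estimates respect each piece separately exactly as in the standard Bourgain theory, so the contraction mapping argument reduces entirely to proving two bilinear estimates analogous to \eqref{bilinear1} and \eqref{bilinear2} but now with $X$ replaced by $Y$, the second one under the mean-zero condition on $u$.

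The decisive observation, already recorded in Remark \ref{REM:bilinear}, is that the standard bilinear estimates \eqref{bilinear1}--\eqref{bilinear2} hold down to $s = 0$ once the $(n, n_1)$-convolution variables are restricted away from the resonant lines $n_1 = c_j n$ (resp. $n_1 = d_j n$, $n_1 = 0$). Accordingly, for each bilinear estimate I would split
\[\{(n, n_1) : n = n_1 + n_2\} = \mathcal{N} \cup \mathcal{R},\]
where $\mathcal{R}$ is the union of unit neighborhoods of the resonant lines, containing at most four values of $n_1$ for each $n$, and $\mathcal{N}$ is its complement. On $\mathcal{N}$, Remark \ref{REM:bilinear} applied at level $s_1 \geq 0$ yields the $X^{s_1, -\frac{1}{2}}$-part of the output estimate (the constraint $s_1 > \frac{1}{4}$ entering through Bourgain's $L^4$-Strichartz estimate for the KdV semigroup on $\T$, which underlies the non-resonant bilinear bound). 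A Cauchy--Schwarz in $n_1$ paired with the standard modulation gain from \eqref{resonance1} then handles the $\sup_n\jb{n}^{s_2}$-part of the output on $\mathcal{N}$, and one checks that $2s_1 > s_2$ is exactly the balance needed so that two factors of $s_1$-regularity on the inputs cover a single factor of $s_2$-regularity on the output.

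On $\mathcal{R}$, the convolution collapses to an $O(1)$-term sum per $n$ with $|n_1|, |n_2| \sim |n|$, and I would place both input factors in their $\sup_n\jb{n}^{s_2}$-norm to extract $\jb{n_1}^{-s_2}\jb{n_2}^{-s_2} \sim \jb{n}^{-2s_2}$, while the Diophantine lower bound \eqref{lowerbdC} (and \eqref{lowerbdD} for the second bilinear) on the maximum modulation contributes an extra factor of $\jb{n}^{-(1 - \nu - \eps)/2}$ from one of the $\jb{\tau - \cdot}^{-\frac{1}{2}}$ weights. Balancing the $|n|$-loss from $\partial_x$, the output weight $\jb{n}^{s_1}$ or $\jb{n}^{s_2}$, the input decay $\jb{n}^{-2s_2}$, and the modulation gain closes both output norms precisely when $s_2 > \frac{1}{2} + \frac{1}{2}\max(\nu_{c_1}, \nu_{d_1}, \nu_{d_2})$. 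The remaining resonance $n_1 = 0$ in the second bilinear estimate is killed outright by the mean-zero hypothesis on $u$, which the Majda--Biello flow preserves.

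The main obstacle I anticipate is closing the $\sup_n\jb{n}^{s_2}$-component of the output norm when exactly one input frequency lies in the resonant set and the other does not: the pure $\ell^\infty$-in-frequency bound on both factors wastes the non-resonant input, whereas the pure $X^{s_1, \frac{1}{2}}$ bilinear bound is too weak to carry the $s_2$-weight on the output. The correct trade-off pairs the pointwise $\jb{n_j}^{-s_2}$-decay on the resonant factor with the $X^{s_1, \frac{1}{2}}$-information on the non-resonant factor, and the careful gluing of the Diophantine arithmetic in $s_1, s_2, \nu_{c_1}, \nu_{d_1}, \nu_{d_2}$, and $\eps$ so that both bilinear estimates close simultaneously in \emph{both} components of the $Y$-norm is the technically most delicate step, and is precisely what forces the combined hypothesis $2s_1 > s_2 > \frac{1}{2} + \frac{1}{2}\max(\nu_{c_1}, \nu_{d_1}, \nu_{d_2})$.
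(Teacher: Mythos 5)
Your overall strategy is the one the paper itself follows: hybrid spaces built from $X^{s_1,\frac12}$ plus an $L^\infty_n$-based $s_2$-weighted piece, the nonresonant region handled by the $s\geq 0$ bilinear estimates of Remark \ref{REM:bilinear} (with the constraint $2s_1>s_2$ arising from the sup-part of the output there), and the resonant region reduced to the four values $n_1=[c_i n],[c_i n]+1$ (resp. $[d_i n],[d_i n]+1$) per $n$, where the Diophantine bounds \eqref{lowerbdC}, \eqref{lowerbdD} supply the modulation gain and the threshold $s_2>\frac12+\frac12\max(\nu_{c_1},\nu_{d_1},\nu_{d_2})$, the mean-zero condition removing $n_1=0$. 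However, your bookkeeping on the resonant set contains a step that fails as written: you place \emph{both} inputs in the $\sup_n\jb{n}^{s_2}$-norm. That closes the $L^\infty_n$-based component of the output (exponent $s_2+1-2s_2-\frac{1-\nu}{2}\le 0$ exactly when $s_2\ge\frac12+\frac{\nu}{2}$), but for the $L^2_n$-based component of the output (the $X^{s_1,-\frac12}$ and $L^2_nL^1_\tau$ pieces) a bound that is merely uniform in $n$ is not square-summable: with only $L^\infty_n$ information on the inputs you would need the multiplier to decay like $|n|^{-\frac12-}$, i.e. $s_2>\frac{s_1+1}{2}+\frac{\nu}{4}$, which the hypotheses do not provide (take $\nu\approx 0$, $s_1=0.45$, $s_2=0.6$). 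The paper's Proposition \ref{Gibbsbilinear1} avoids this by keeping one factor in the $L^2_n$-based $s_1$-weighted norm and only the other in the sup-norm; since $n\mapsto [c_i n]$ is essentially injective, the $\ell^2_n$ summability of the output is inherited from that factor, and the multiplier bound $|n|^{\frac12+\frac{\nu}{2}+\eps-s_2}\lesssim 1$ then closes at the stated threshold. Your final paragraph gestures at exactly this pairing, but frames it around a ``one resonant input, one non-resonant input'' case that does not occur: resonance here is a property of the pair $(n,n_1)$, not of the individual input frequencies.

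Two further points are glossed over. First, at $b=\frac12$ the $X^{s,b}$-type norms do not control $C_t H^{s_1,s_2}$ (and for $b>\frac12$ the periodic bilinear estimates fail), so your sup-in-frequency norm with the $\jb{\tau-n^3}^{\frac12}$ weight alone does not ``recover the $\sup_n\jb{n}^{s_2}|\ft{\phi}(n)|$ part at $t=0$''; the paper remedies this by appending the $L^2_nL^1_\tau$ and $L^\infty_nL^1_\tau$ companion norms in the $Y$ and $Z$ spaces, and the Duhamel estimate (Lemma \ref{LEM:duhamel2}) must be proved for these. Second, the bilinear estimates without a time factor yield a contraction only for small data; for general data one must extract $T^\theta$ as in \eqref{Xbilinear3}--\eqref{Xbilinear4} (the paper does this by convolving with $\ft{\eta_{_{2T}}}$ and using H\"older/Young in $\tau$), a step absent from your plan. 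Finally, a minor point: $s_1>\frac14$ is simply forced by $2s_1>s_2>\frac12$; it is not a separate $L^4$-Strichartz constraint.
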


As seen in Bourgain's work on mKdV and Zakharov system \cite{BO4}, \cite{BO5}, 
 we have $\mu (H^{s_1, s_2} \times H^{s_1, s_2}) = 1$ for $0 < s_1 < \frac{1}{2} < s_2 < 1 $.
i.e. $H^{s_1, s_2}(\T) \times H^{s_1, s_2}(\T)$ contains the full support of $\mu$.
In \cite{BO4}, Bourgain proved the invariance of the Gibbs measure and a.s. GWP of mKdV
by establishing an improved local well-posedness in $H^{s_1, s_2}$ with $s_1 = \frac{1}{2}-$
and $s_2 =1- $.
Following his argument, 
we obtain the a.s. global well-posedness of \eqref{MB},
using the finite dimensional approximation to \eqref{MB} 
along with the invariance of the finite dimensional Gibbs measure.

\begin{maintheorem} \label{THM:GWP1}
Let $\al \in (0, 4) \setminus \{1\}$ and $\max( \nu_{c_1},\nu_{d_1}, \nu_{d_2}) < 1$. Assume the mean 0 condition on $u_0$.
Then, the Gibbs measure $\mu$ (with an $L^2$-cutoff) is invariant under the flow of \eqref{MB}, 
and \eqref{MB} is globally well-posed a.s. on the statistical ensemble.
\end{maintheorem}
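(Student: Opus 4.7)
The plan is to adapt Bourgain's strategy for mKdV \cite{BO4} to the improved local theory from Theorem \ref{THM:LWP1}. The argument has four stages: (i) truncate \eqref{MB} to a finite-dimensional Hamiltonian system whose flow $\Phi^N_t$ preserves a truncated Gibbs measure $\mu_N$; (ii) verify that $\mu_N\to\mu$ in an appropriate sense on $H^{s_1,s_2}(\T)\times H^{s_1,s_2}(\T)$; (iii) use $\mu_N$-invariance together with Theorem \ref{THM:LWP1} to produce, for every $T,\eps>0$, a set of $\mu_N$-measure $\geq 1-\eps$ on which $\Phi^N_t$ stays bounded in $H^{s_1,s_2}$ up to time $T$; (iv) approximate $\Phi_t$ by $\Phi^N_t$ to transfer these bounds to the true flow, extend globally, and pass to the limit to obtain invariance of $\mu$.

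For step (i), let $\pi_N$ denote the Fourier projection onto $\{|n|\leq N\}$ and consider the Galerkin truncation
\[
u^N_t+u^N_{xxx}+\pi_N(v^N v^N_x)=0, \qquad v^N_t+\al v^N_{xxx}+\pi_N((u^N v^N)_x)=0,
\]
with data $\pi_N(u_0,v_0)$. This is a Hamiltonian ODE on $\pi_N L^2\times\pi_N L^2$ with conserved quantities $H(\pi_N u^N,\pi_N v^N)$ and $N(\pi_N u^N,\pi_N v^N)$. Liouville's theorem and Hamiltonian conservation give invariance of
\[
d\mu_N=Z_N^{-1}\exp\Bigl(\tfrac12\textstyle\int \pi_N u\,(\pi_N v)^2\,dx\Bigr)\chi_{\{\|(u,v)\|_{L^2}\leq B\}}\,d\rho_N,
\]
where $\rho_N$ is the push-forward of the Wiener measure $\rho$ under $\pi_N$. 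A standard computation, as in \cite{LRS},\cite{BO4}, shows that the densities $dR_N/d\rho$ converge in $L^2(d\rho)$, from which $\mu_N\to\mu$ follows in total variation on $H^{s_1,s_2}\times H^{s_1,s_2}$ and set measures pass to the limit.

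In step (iii), Theorem \ref{THM:LWP1} applies uniformly in $N$ since truncation does not weaken the bilinear estimates \eqref{bilinear1}, \eqref{bilinear2}; hence data of $H^{s_1,s_2}$-size at most $R$ yield solutions for time $\delta=\delta(R)\gtrsim R^{-\kappa}$ with norm at most $2R$. Covering $[-T,T]$ by $O(T/\delta)$ intervals and applying the union bound together with $\mu_N$-invariance reduces matters to a Gaussian-type tail estimate $\mu_N(\|(u,v)\|_{H^{s_1,s_2}}>R)\lesssim e^{-cR^2}$, which is standard for the Wiener measure. Choosing $R=R(T,\eps)$ large enough yields a set $\Sigma^N_{T,\eps}$ with $\mu_N(\Sigma^N_{T,\eps})\geq 1-\eps$ on which $\Phi^N_t$ is bounded by $R$ in $H^{s_1,s_2}$ for all $|t|\leq T$. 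Step (iv) then transfers these bounds to $\Phi_t$ via an approximation estimate $\|\Phi_t(u_0,v_0)-\Phi^N_t(\pi_N(u_0,v_0))\|_{H^{s_1,s_2}}\to 0$ on local intervals of length $\delta(R)$, proved by the Bourgain-space contraction scheme underlying Theorem \ref{THM:LWP1}. Combined with $\mu_N\to\mu$, this produces a set of $\mu$-measure $\geq 1-\eps$ on which $\Phi_t$ exists on $[-T,T]$; letting $T\to\infty$, $\eps\to 0$ along a sequence yields a set of full $\mu$-measure on which the flow is global, and invariance $\mu(A)=\mu(\Phi_{-t}(A))$ follows by passing to the limit in $\mu_N(A)=\mu_N(\Phi^N_{-t}(A))$.

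The main obstacle is the approximation estimate in step (iv). Near the thresholds $s_2>\tfrac12+\tfrac12\max(\nu_{c_1},\nu_{d_1},\nu_{d_2})$ and $s_1>\tfrac14$, the bilinear estimates are only marginally true, so one must squeeze a quantitative gain $N^{-\theta}$ (for some $\theta>0$) out of the Fourier cutoff $\pi_N$ without disturbing the delicate resonance analysis controlled by the $\sup_n\jb{n}^{s_2}|\ft\phi(n)|$ component of $\|\cdot\|_{H^{s_1,s_2}}$. This is precisely where the hypothesis $2s_1>s_2$ in Theorem \ref{THM:LWP1} enters: it leaves just enough slack in the trilinear $X^{s,b}$ estimates to be converted into the required $N^{-\theta}$ factor, while preserving the handling of the four near-resonant frequencies via the $s_2$-part of the norm.
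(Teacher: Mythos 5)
Your overall scheme is the same as the paper's (Galerkin truncation, Liouville invariance of $\mu_N$, Gaussian tail/tightness bounds, invariance-based growth control for the truncated flow, approximation of the true flow, passage to the limit), but two steps as you state them have genuine gaps.

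First, the approximation estimate in your step (iv) is resolved by the wrong mechanism. You propose to extract the quantitative gain $N^{-\theta}$ from ``slack'' in the bilinear estimates coming from $2s_1>s_2$; this is not established anywhere, and at the working regularity $(s_1,s_2)$ the projection $\pi_N$ gives no uniform smallness at all: $\|(1-\pi_N)(u_0,v_0)\|_{H^{s_1,s_2}}$ tends to $0$ for each fixed datum but carries no rate, and you need a rate that can be summed over the $\sim T/\delta$ iteration steps, uniformly over the data set. The paper's fix is different and essential: since $\mu$ is supported on $H^{\sigma_1,\sigma_2}\times H^{\sigma_1,\sigma_2}$ for slightly larger exponents $s_1<\sigma_1<\tfrac12<s_2<\sigma_2<1$ (still admissible, with $2\sigma_1>\sigma_2$), one restricts to a tightness set bounded in the \emph{stronger} norm and uses $\|(1-\mathbb{P}_N)w\|_{H^{s_1,s_2}}\lesssim N^{-\gamma}\|w\|_{H^{\sigma_1,\sigma_2}}$ with $\gamma=\min(\sigma_1-s_1,\sigma_2-s_2)>0$, both for the initial data and for the terms $v-\mathbb{P}_{N/2}v$, $u-\mathbb{P}_{N/2}u$ appearing in the difference equation; the smallness in time $\delta^\theta$ then closes the perturbative estimate and the accumulated error is $[T/\delta]KN^{-\gamma}\ll1$ after choosing $N=N(T,\eps)$ large. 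Without introducing the auxiliary exponents $(\sigma_1,\sigma_2)$ your iteration over $[T/\delta]$ intervals does not close.

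Second, your step (ii) claim that $\mu_N\to\mu$ in total variation on $H^{s_1,s_2}\times H^{s_1,s_2}$ is false as stated: with $\rho_N$ the push-forward of $\rho$ under $\pi_N$, the measure $\mu_N$ is supported on the finite-dimensional range of $\pi_N$, which is a $\mu$-null set, so $\mu_N$ and $\mu$ are mutually singular. The correct comparison (as in the paper) is at the level of densities with respect to the common reference measure $d(a_0,b_0)\otimes d\rho$, tested against bounded continuous functions of finitely many Fourier modes: dominated convergence gives $\int f(S^t_N(\mathbb{P}_N\phi,\mathbb{P}_N\psi))\,(d\mu-d\mu_N)\to0$, and the invariance of $\mu$ is then obtained in the sense $\int f(S^t\varphi)\,d\mu=\int f(\varphi)\,d\mu$ for $f$ in the closure of such cylinder functions, using the uniform convergence $S^t_N\mathbb{P}_N\varphi\to S^t\varphi$ on the good set $\Omega_\eps$. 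Your final sentence, passing to the limit directly in $\mu_N(A)=\mu_N(\Phi^N_{-t}(A))$ for sets $A$, is not justified, since indicator functions are not continuous and the flows converge only on $\Omega_\eps$; the test-function formulation is the way this limit is actually carried out.
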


We point out that Theorem \ref{THM:GWP1} does not fill the gap between the LWP and GWP of \eqref{MB}
as we initially hoped.
However,  it does establish a new GWP result 
for almost every $\al \in (0, 4) \setminus\{1\}$ which can not be obtained by the methods in \cite{OH1} and \cite{OH2}.

This work is a  part of the author's Ph.D. thesis \cite{OHTHESIS}.
This paper is organized as follows:
In Section 2, we introduce some standard notations.
In Section 3, we go over the basic theory of Gaussian Hilbert spaces and abstract Wiener spaces, and 
we give the precise meaning to the Gibbs measure $\mu$.
In Section 4, we introduce the function spaces and linear estimates.
Then,  we prove Theorem \ref{THM:LWP1} in Section 5, 
and extend this local result to a.s. GWP in Section 6.
We also establish the invariance of the Gibbs measure $\mu$. 
In Appendix, we present the proof of a probabilistic lemma from Section 3.

\smallskip

\noindent
{\bf Acknowledgements:} 
The author would like to express his sincere gratitude to his Ph.D. advisor, Prof. Andrea R. Nahmod.
He acknowledges the NSF summer support  in 2005--06 under Prof. Nahmod's grant DMS 0503542.
He is also grateful to Prof. Luc Rey-Bellet for helpful discussions in probability
and abstract Wiener spaces.

\section{Notation}

On $\T$, the spatial Fourier domain is $\Z$.
Let $dn$ be the normalized counting measure on $\Z$, 
and we say $f \in L^p(\Z)$, $1 \leq p < \infty$, if
\[ \| f \|_{L^p(\mathbb{Z})} = \bigg( \int_{\mathbb{Z}} |f(n)|^p dn \bigg)^\frac{1}{p}  
:= \bigg( \frac{1}{2\pi} \sum_{n \in \mathbb{Z}} |f(n)|^p \bigg)^\frac{1}{p} < \infty.\]

\noindent
If $ p = \infty$, we have the obvious definition involving the essential supremum.
We often drop $2\pi$ for simplicity.
If the function depends on both $x$ and $t$, we use ${}^{\wedge_x}$ 
(and ${}^{\wedge_t}$) to denote the spatial (and temporal) Fourier transform, respectively.
However, when there is no confusion, we simply use ${}^\wedge$ to denote the spatial Fourier transform,
temporal Fourier transform, and  the space-time Fourier transform, depending on the context.

Let $X^{s, b}$ and $X_\al^{s, b}$ be as in \eqref{WXsb1} and \eqref{WXsb2}.
Given any time interval $I = [t_1, t_2]\subset \mathbb{R}$, we define the local in time $X^{s, b}(\T \times I )$ 
(or simply $X^{s, b}[t_1, t_2]$) by
\[ \|u\|_{X_I^{s, b}} 
= \|u \|_{X^{s, b}(\T \times I )} = \inf \big\{ \|\wt{u} \|_{X^{s, b}(\T \times \mathbb{R})}: {\wt{u}|_I = u}\big\}.\]

\noindent
We define the local in time $X^{s, b}_\al (\T \times I )$ analogously.
Also, in dealing with a product space of two copies of a Banach space $X$, 
we may use $X\times X$ and $X$ interchangeably.

Let $\eta \in C^\infty_c(\mathbb{R})$ be a smooth cutoff function supported on $[-2, 2]$ with $\eta \equiv 1$ on $[-1, 1]$
and let $\eta_{_T}(t) =\eta(T^{-1}t)$. 
We use $c,$ $ C$ to denote various constants, usually depending only on $s_1, s_2, b$, and  $\al$.
If a constant depends on other quantities, we will make it explicit.
We use $A\lesssim B$ to denote an estimate of the form $A\leq CB$.
Similarly, we use $A\sim B$ to denote $A\lesssim B$ and $B\lesssim A$
and use $A\ll B$ when there is no general constant $C$ such that $B \leq CA$.
We also use $a+$ (and $a-$) to denote $a + \eps$ (and $a - \eps$), respectively,  
for arbitrarily small $\eps \ll 1$.

\section{Gaussian Measure in Hilbert Space and Abstract Wiener Space}

In this section, we go over the basic theory of Gaussian measures in Hilbert spaces
and abstract Wiener spaces
and provide the precise meaning of the Gibbs measure 
``$d \mu = Z^{-1} \exp(-\beta H(u, v) ) \prod_{x\in \T} d u(x) \otimes dv(x)$"
appearing in Section 1.
For simplicity, we set the reciprocal temperature $\beta = 1$.
For details, see  Zhidokov \cite{Z}, Gross \cite{GROSS}, and Kuo \cite{HUO}.

First, recall  (centered) Gaussian measures in $\mathbb{R}^n$.
Let $n \in \mathbb{N}$ and $B$ be a symmetric positive $n \times n$ matrix
with real entries.
The  Borel measure $\rho$ in $\mathbb{R}^n$ with the density
\[ d \rho(x) = \frac{1}{\sqrt{(2\pi)^n \det (B )}} \exp \big( -\tfrac{1}{2} \langle B^{-1} x, x \rangle_{\mathbb{R}^n} \big)\]

\noindent
is called a (nondegenerate centered ) Gaussian measure in $\mathbb{R}^n$.
Note that $\rho(\mathbb{R}^n) = 1$.

Now, we consider the analogous definition for the infinite dimensional (centered) Gaussian measures.
Let $H$ be a real separable Hilbert space and $B: H \to H$ be a linear positive self-adjoint operator 
(generally not bounded) with eigenvalues $\{\ld_n\}_{n\in \mathbb{N}}$
and the corresponding eigenvectors $\{e_n\}_{n\in\mathbb{N}}$  forming an orthonormal basis of $H$.
We call a set $M \subset H$  cylindrical if there exists an integer $n\geq 1$ and a Borel set $F \subset \mathbb{R}^n$
such that
\begin{equation} \label{CYLINDER}
 M = \big\{ x \in H : ( \jb{ x, e_1}_H, \cdots, \jb{ x, e_n}_H ) \in F \big\}. 
\end{equation}

\noindent
For a fixed operator $B$ as above, we denote by $\mathcal{A}$ the set of all cylindrical subsets of $H$.
Note that $\mathcal{A}$ is a field.
Then, the centered Gaussian measure in $H$ with the correlation operator $B$ is defined as 
the additive (but not countably additive in general) measure $\rho$ defined on the field $\mathcal{A}$
via
\begin{equation} \label{CGAUSSIAN}
 \rho(M) = (2\pi)^{-\frac{n}{2}} \prod_{j = 1}^n \ld_j^{-\frac{1}{2}} \int_F e^{-\frac{1}{2}\sum_{j = 1}^n \ld_j^{-1} x_j^2 }d x_1 \cdots dx_n,
\text{ for }M \in \mathcal{A} \text{ as in \eqref{CYLINDER}. }
\end{equation}

\noindent
The following theorem tells us when  this Gaussian measure $\rho$ is countably additive.

\begin{theorem} \label{COUNTABLEADD}
The  Gaussian measure $\rho$ defined in \eqref{CGAUSSIAN} is countably additive
on the field $\mathcal{A}$ if and only if $B$ is an operator of trace class, 
i.e. $\sum_{n = 1}^\infty \ld_n < \infty$.
If the latter holds, then
the minimal $\s$-field $\mathcal{M}$ containing the field $\mathcal{A}$ of all cylindrical sets is the Borel $\s$-field on $H$.
\end{theorem}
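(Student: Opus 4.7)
The plan is to handle the two directions of the equivalence separately, and then derive the $\sigma$-field statement from the separability of $H$.

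For the sufficiency direction, assuming $\sum_{n\ge 1} \lambda_n < \infty$, I would construct a Borel probability measure on $H$ directly and verify that it agrees with the cylindrical prescription \eqref{CGAUSSIAN}. On an auxiliary probability space carrying i.i.d.\ standard Gaussians $\{g_n\}_{n\ge 1}$, set $S_N = \sum_{n=1}^N \sqrt{\lambda_n}\, g_n e_n$. Orthonormality of $\{e_n\}$ gives
\[ E \| S_N - S_M \|_H^2 = \sum_{n = M+1}^N \lambda_n, \]
so $(S_N)$ is Cauchy in $L^2(\Omega; H)$ and hence (after a routine use of Kolmogorov's maximal inequality) converges a.s.\ to an $H$-valued random element $X$. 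Letting $\rho$ be the law of $X$ defines a Borel probability measure on $H$; a direct computation of the joint characteristic function of $(\langle X, e_1\rangle, \ldots, \langle X, e_n\rangle)$ shows it is centered Gaussian with covariance $\mathrm{diag}(\lambda_1, \ldots, \lambda_n)$, so the restriction of $\rho$ to $\mathcal{A}$ coincides with \eqref{CGAUSSIAN}, yielding countable additivity.

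For the necessity direction, suppose to the contrary that $\sum_n \lambda_n = \infty$ while $\rho$ is countably additive on $\mathcal{A}$, so it extends to a probability measure on $\mathcal{M}$. Under the cylindrical prescription the coordinate functionals $\xi_n(x) := \langle x, e_n\rangle_H$ are independent centered Gaussians with variances $\lambda_n$, so $E[\xi_n^2 \wedge 1] \gtrsim \min(\lambda_n, 1)$ and $\sum_n E[\xi_n^2 \wedge 1] = \infty$. Kolmogorov's three-series theorem (in its form for nonnegative independent summands) then forces $\sum_n \xi_n(x)^2 = \infty$ for $\rho$-a.e.\ $x$. However, every $x \in H$ satisfies $\|x\|_H^2 = \sum_n \xi_n(x)^2 < \infty$ by Parseval, contradicting $\rho(H) = 1$.

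For the $\sigma$-field identity, separability of $H$ implies the Borel $\sigma$-field is generated by open balls centered at the countable $\mathbb{Q}$-span of $\{e_n\}$. Each such ball $\{x : \|x - x_0\|_H < r\}$, with $x_0$ in this span, can be written as $\bigcap_{N \ge 1} \{x : \sum_{n=1}^N |\langle x - x_0, e_n\rangle|^2 < r^2\}$, whose constituents are finite-dimensional cylindrical sets in $\mathcal{A}$; conversely every cylindrical set is Borel, so $\mathcal{M}$ equals the Borel $\sigma$-field on $H$. I expect the subtlest step to be the sufficiency direction: one must guarantee convergence of the random series and then carefully verify that the law of the limit really does extend the finitely-additive cylindrical measure on all of $\mathcal{A}$ in a basis-independent way, which is precisely where the trace-class hypothesis enters essentially.
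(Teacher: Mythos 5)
Your argument is correct in substance, but note that the paper itself does not prove Theorem \ref{COUNTABLEADD} at all: it is quoted as a standard fact with the proof deferred to Zhidkov \cite{Z} and Kuo \cite{HUO}, so there is no internal proof to compare against. What you give is essentially the classical textbook argument from those sources: for sufficiency, the random-series construction $X=\sum_n \sqrt{\ld_n}\, g_n e_n$ (the $L^2(\Omega;H)$ Cauchy estimate plus Kolmogorov/It\^o--Nisio for a.s.\ convergence), with the law of $X$ furnishing a countably additive Borel extension whose finite-dimensional marginals match \eqref{CGAUSSIAN}; for necessity, the Carath\'eodory extension of $\rho$ to $\mathcal{M}$, independence of the coordinate Gaussians $\xi_n=\jb{x,e_n}_H$, and the three-series/zero--one law criterion $\sum_n E[\xi_n^2\wedge 1]=\infty$ forcing $\sum_n\xi_n^2=\infty$ a.s., against Parseval's identity $\sum_n\xi_n(x)^2=\|x\|_H^2<\infty$ for every $x\in H$. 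Two small points deserve polish. First, in the necessity step you should say explicitly that countable additivity on the field $\mathcal{A}$ gives, via Carath\'eodory, a probability measure on $\mathcal{M}=\sigma(\mathcal{A})$, and that the divergence event lies in $\mathcal{M}$ (it does, since each $\xi_n$ is $\mathcal{A}$-measurable); the contradiction is then that this event has measure one while being empty as a subset of $H$. Second, your identity for balls is slightly off: $\bigcap_{N}\{x:\sum_{n\le N}|\jb{x-x_0,e_n}|^2<r^2\}$ is not the open ball (it sits between the open and closed balls). This is harmless---write the open ball as $\bigcup_{k\ge1}\bigcap_{N\ge1}\{x:\sum_{n\le N}|\jb{x-x_0,e_n}|^2\le (r-\tfrac1k)^2\}$, or simply generate the Borel $\sigma$-field from closed balls---but as stated the displayed set equality is incorrect. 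With these repairs the proof is complete and self-contained, which is arguably more than the paper offers for this statement.
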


Consider a sequence of the finite dimensional Gaussian measures $\{\rho_n\}_{n\in\mathbb{N}}$ as follows.
For fixed $n \in \mathbb{N}$, let $\mathcal{M}_n$ be the set of all cylindrical sets in $H$ of the form \eqref{CYLINDER} with this fixed $n$
and arbitrary Borel sets $F\subset \mathbb{R}^n$.
Clearly, $\mathcal{M}_n$ is a $\s$-field, and setting 
\[ \rho_n(M) = (2\pi)^{-\frac{n}{2}} \prod_{j = 1}^n \ld_j^{-\frac{1}{2}} \int_F e^{-\frac{1}{2}\sum_{j = 1}^n \ld_j^{-1} x_j^2 }d x_1 \cdots dx_n\]

\noindent
for $M \in \mathcal{M}_n$, we obtain a countably additive measure $\rho_n$ defined on $\mathcal{M}_n$.
Then, one can show that each measure $\rho_n$ can be naturally extended onto the whole Borel $\s$-field $\mathcal{M}$ of $H$
by $ \rho_n(A) := \rho_n(A \cap \text{span}\{e_1, \cdots, e_n\})$
for $A \in \mathcal{M}$.
Then, we have

\begin{proposition} \label{PROP:Zhidkov2}
Let $\rho$ in  \eqref{CGAUSSIAN} be countably additive.
Then,  $\{\rho_n\}_{n\in \mathbb{N}}$ constructed above converges weakly to $\rho$ as $n \to \infty$.

\end{proposition}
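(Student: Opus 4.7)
The plan is to couple $\rho_n$ and $\rho$ as laws of $H$-valued random variables on a single probability space and then derive weak convergence from almost sure convergence. Throughout we exploit the standing assumption that $\rho$ is countably additive, which by Theorem \ref{COUNTABLEADD} means $B$ is of trace class, $\sum_{j=1}^\infty \ld_j < \infty$.

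First, I would introduce an auxiliary probability space $(\Omega, \mathcal{F}, \mathbb{P})$ carrying i.i.d.\ standard Gaussians $\{\xi_j\}_{j=1}^\infty$, and set
$$ Y_n := \sum_{j=1}^n \sqrt{\ld_j}\,\xi_j\, e_j, \qquad Y := \sum_{j=1}^\infty \sqrt{\ld_j}\,\xi_j\, e_j. $$
Since $\mathbb{E}\|Y_n - Y_m\|_H^2 = \sum_{j=m+1}^n \ld_j \to 0$, the partial sums $\{Y_n\}$ are Cauchy in $L^2(\Omega; H)$ and form an $H$-valued $L^2$-martingale with respect to the natural filtration. Therefore, by the Hilbert-space martingale convergence theorem (or Kolmogorov's theorem on sums of independent Banach-valued variables), $Y_n \to Y$ both in $L^2(\Omega; H)$ and almost surely in $H$.

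Second, I would identify the distributions. The vector $(\jb{Y_n, e_1}_H, \ldots, \jb{Y_n, e_n}_H) = (\sqrt{\ld_1}\xi_1, \ldots, \sqrt{\ld_n}\xi_n)$ is a centered Gaussian in $\R^n$ with covariance $\mathrm{diag}(\ld_1, \ldots, \ld_n)$, which is precisely the definition of $\rho_n$ in \eqref{CGAUSSIAN}; hence $\mathrm{Law}(Y_n) = \rho_n$. For the limit, a direct computation gives the characteristic functional
$$ \mathbb{E}\bigl[e^{i \jb{Y, y}_H}\bigr] = \exp\Bigl(-\tfrac{1}{2} \sum_{j=1}^\infty \ld_j |\jb{y, e_j}_H|^2\Bigr) = \exp\bigl(-\tfrac{1}{2}\jb{By, y}_H\bigr), \qquad y \in H, $$
and one then checks that $\mathrm{Law}(Y)$ agrees with the countably additive extension $\rho$ on every cylindrical set in $\mathcal{A}$. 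By Theorem \ref{COUNTABLEADD} these cylindrical sets generate the Borel $\s$-field $\mathcal{M}$ on $H$, and uniqueness of extension yields $\mathrm{Law}(Y) = \rho$.

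Finally, for any bounded continuous $f \colon H \to \R$, the a.s.\ convergence $Y_n \to Y$ in $H$ together with dominated convergence gives
$$ \int_H f \, d\rho_n = \mathbb{E}[f(Y_n)] \longrightarrow \mathbb{E}[f(Y)] = \int_H f \, d\rho, $$
which is exactly the weak convergence $\rho_n \to \rho$. The delicate step is the second one: one must show the random series defining $Y$ actually takes values in $H$ almost surely, and that its law coincides with $\rho$ as a Borel measure on $H$, not merely on the cylindrical field $\mathcal{A}$. Both are handled by the trace-class hypothesis, the first through $L^2$-summability of $\sqrt{\ld_j}\,\xi_j\,e_j$ and the second through Theorem \ref{COUNTABLEADD}. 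An alternative route via tightness of $\{\rho_n\}$ plus pointwise convergence of characteristic functionals and L\'evy's continuity theorem on Hilbert space is available but rests on essentially the same trace-class input.
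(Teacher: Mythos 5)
Your argument is correct: the trace-class condition $\sum_j \ld_j < \infty$ (equivalent, by Theorem \ref{COUNTABLEADD}, to countable additivity of $\rho$) gives $L^2$ and a.s.\ convergence of the random series, the laws are identified correctly (noting that $\mathrm{Law}(Y_n)=\rho_n$ uses the paper's convention that $\rho_n$ is extended to Borel sets as a measure supported on $\mathrm{span}\{e_1,\dots,e_n\}$, and $\mathrm{Law}(Y)=\rho$ follows from agreement on the cylindrical field $\mathcal{A}$, a $\pi$-system generating $\mathcal{M}$), and a.s.\ convergence plus dominated convergence yields weak convergence. The paper itself offers no proof of this proposition --- it is quoted as background from Zhidkov \cite{Z} --- so there is no in-paper argument to match; but the standard route, which also shortens yours, is to observe that the Borel extension of $\rho_n$ is exactly the pushforward $\rho \circ \mathbb{P}_n^{-1}$ under the orthogonal projection $\mathbb{P}_n$ onto $\mathrm{span}\{e_1,\dots,e_n\}$ (the paper states precisely this identity, $\rho\circ\mathbb{P}_N^{-1}=\rho_N$, for the concrete Wiener measure later in Section 3). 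With that observation the identity map on $(H,\mathcal{M},\rho)$ already provides the coupling: $\int f\, d\rho_n = \int f(\mathbb{P}_n x)\, d\rho(x)$, and since $\mathbb{P}_n x \to x$ in $H$ for every $x$, dominated convergence gives $\int f\, d\rho_n \to \int f \,d\rho$ for all bounded continuous $f$. This avoids constructing the auxiliary i.i.d.\ sequence, the martingale/It\^o--Nisio step, and the uniqueness-of-extension verification that $\mathrm{Law}(Y)=\rho$; your construction buys a self-contained realization of $\rho$ as the law of an explicit random series, which is more than the proposition needs but is harmless and sometimes useful elsewhere.
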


Now, we construct the Gaussian measure which comes from the quadratic part of the Hamiltonian
$H(\phi, \psi) = \frac{1}{2} \int \phi^2_x + \al \psi^2_x - \phi \psi^2 dx$, $\al > 0$.
First, we identify a pair of real-valued functions $(\phi, \psi)$ on $\mathbb{T}$ with their Fourier coefficients $(a, b) = (a_n, b_n)_{n\in \mathbb{Z}}$.
Since $\phi $ and $\psi$ are real-valued, we have $a_{-n} = \cj{a_n}$ and $b_{-n} = \cj{b_n}$. 
Then, define the finite dimensional Gaussian measure $\rho_N$ on $\mathbb{C}^{N} \times \mathbb{C}^{N} 
= \big\{ (a_n, b_n) : 1 \leq n \leq N \big\}$
with the normalized density
\begin{equation}\label{WienerN}
 d \rho_N = \wt{Z}_N^{-1} e^{-\frac{1}{2} \sum_{n = 1}^N n^2 (|a_n|^2+ \al |b_n|^2)} \textstyle \prod_{ n = 1 }^N d ( a_n \otimes b_n), 
\end{equation}

\noindent
where
$\wt{Z}_N = \int_{\mathbb{C}^{N} \times \mathbb{C}^{N}} 
e^{-\frac{1}{2} \sum_{ n = 1 }^N n^2 (|a_n|^2+ \al |b_n|^2)} \prod_{ n = 1 }^N  d ( a_n \otimes b_n). $
Note that this measure is the induced probability measure on $\mathbb{C}^{N} \times \mathbb{C}^{N}$ under the map
$ \omega \mapsto \big\{ \big(n^{-1} {f_n(\omega)},  {\al^{-\frac{1}{2}}n^{-1}}{g_n(\omega)} \big): 1 \leq n \leq N \big\},$
\noindent
where $\{f_n(\omega)\}$ and $\{g_n(\omega)\}$ are i.i.d. standard complex Gaussian random variables.
In particular,  
 $\rho_N$ is a Wiener measure on $\mathbb{C}^{2N}$.
Next, define 
\begin{equation} \label{Wiener}
 d \rho = \wt{Z}^{-1} e^{-\frac{1}{2} \sum_{  n \geq 1 } n^2 (|a_n|^2+ \al |b_n|^2)} \textstyle \prod_{  n \geq 1 } d ( a_n \otimes b_n), 
\end{equation}
where
$\wt{Z} = \int e^{-\frac{1}{2} \sum_{   n \geq 1 } n^2 (|a_n|^2+ \al |b_n|^2)}  \prod_{  n \geq 1 } d ( a_n \otimes b_n). $
For now, assume the means of $\phi$ and $\psi$ on $\T$ are 0, i.e. $a_0 = b_0 = 0$.
Let $\dot{H}^s_0$ be the homogeneous Sobolev space restricted to the real-valued mean 0 elements.
Then, we'd like to know for which $s \in \R$  the Gaussian measure $\rho$ with the density
can be a well-defined countably additive measure on $\dot{H}^s_0 \times \dot{H}^s_0$.

For simplicity, we consider a Gaussian on a space of a single real-valued function.
Let $\jb{\cdot, \cdot}_{\dot{H}^s}$ be the usual inner product in $\dot{H}^s$.
i.e. $ \big\langle \sum c_n e^{inx}, \sum d_n e^{inx} \big\rangle_{\dot{H}^s} = \sum_{n \ne 0} |n|^{2s} c_n \cj{d_n} $.
Let $B_s = \sqrt{-\Dl}\vphantom{|}^{2s-2}$.
Then, the weighted exponentials $\{|n|^{-s} e^{inx}\}_{n\ne 0}$ are the eigenvectors of $B_s$ with the eigenvalue $|n|^{2s-2}$,
forming an orthonormal basis of $H^s_0$.
Note that
\[ -\tfrac{1}{2}\jb{B^{-1} \phi, \phi}_{\dot{H^s}} = -\tfrac{1}{2}\Big\langle \sum_{n \ne 0} |n|^{2-2s} a_n e^{inx}, \sum_{n \ne 0} a_n e^{inx} \Big\rangle_{\dot{H}^s} 
= -\tfrac{1}{2}\sum_{n \ne 0} |n|^{2} |a_n|^2.\] 

\noindent
The right hand side is exactly the expression appearing in the exponent in \eqref{Wiener}.
By Theorem \ref{COUNTABLEADD}, $\rho$ is  countably additive
if and only if $B$ is  of trace class, i.e.
$ \sum_{n \ne 0} |n|^{2s-2} < \infty$ if and only if $ s < \frac{1}{2}.$
Hence, $\bigcap_{s < \frac{1}{2}} H^s \times H^s$ is a natural space to work on.

Unfortunately, it is shown in \cite{OH1} that 
\eqref{MB} is ill-posed in $H^s \times H^s$ for $s < \frac{1}{2}$  
when $\al \in (0, 4) \setminus\{1\}$.
In view of Theorems \ref{THM:LWP1}, 
we consider  the property of $\rho$ on 
$H^{s_1, s_2} \times H^{s_1, s_2}$ for $0 < s_1 < \frac{1}{2} < s_2 < 1$.
Since $H^{s_1, s_2} \times H^{s_1, s_2}$ is not a Hilbert space, we now turn to the basic theory of abstract Wiener spaces.

Recall the following definitions \cite{HUO}:
Given  a real separable Hilbert space $H$ with norm $\|\cdot \|$, 
let $\mathcal{F} $ denote the set of finite dimensional orthogonal projections $\mathbb{P}$ of $H$.
Then, define a cylinder set $E$ by  $E = \{ x \in H: \mathbb{P}x \in F\}$ where $\mathbb{P} \in \mathcal{F}$ 
and $F$ is a Borel subset of $\mathbb{P}H$,
and let $\mathcal{R} $ denote the collection of such cylinder sets.
Note that $\mathcal{R}$ is a field but not a $\s$-field.
Then, the Gauss measure $\rho$ on $H$ is defined 
by 
\[ \rho(E) = (2\pi)^{-\frac{n}{2}} \int_F e^{-\frac{\|x\|^2}{2}} dx  \]

\noindent
for $E \in \mathcal{R}$, where
$n = \text{dim} \mathbb{P} H$ and  
$dx$ is the Lebesgue measure on $\mathbb{P}H$.
It is known that $\rho$ is finitely additive but not countably additive in $\mathcal{R}$.

A seminorm $|||\cdot|||$ in $H$ is called measurable if for every $\eps>0$, 
there exists $\mathbb{P}_0 \in \mathcal{F}$ such that 
$ \rho( ||| \mathbb{P} x ||| > \eps  )< \eps$
\noindent
for $\mathbb{P} \in \mathcal{F}$ orthogonal to $\mathbb{P}_0$.
Any measurable seminorm  is weaker  than the norm of $H$,
and $H$ is not complete with respect to $|||\cdot|||$ unless $H$ is finite dimensional.
Let $B$ be the completion of $H$ with respect to $|||\cdot|||$
and denote by $i$ the inclusion map of $H$ into $B$.
The triple $(i, H, B)$ is called an abstract Wiener space.

Now, regarding $y \in B^\ast$ as an element of $H^\ast \equiv H$ by restriction,
we embed $B^\ast $ in $H$.
Define, for a Borel set $F \subset \R^n$, 
\[ \wt{\rho} ( \{x \in B: ((x, y_1), \cdots, (x, y_n) )\in F\})
= \rho ( \{x \in H: (\jb{x, y_1}_H, \cdots, \jb{x, y_n}_H )\in F\}),\]

\noindent
where $y_j$'s are in $B^\ast$ and $(\cdot , \cdot )$ denote the natural pairing between $B$ and $B^\ast$.
Let $\mathcal{R}_B$ denote the collection of cylinder sets
$ \{x \in B: ((x, y_1), \cdots, (x, y_n) )\in F \}$
in $B$.

\begin{theorem}[Gross \cite{GROSS}]
$\wt{\rho}$ is countably additive in the $\s$-field generated by $\mathcal{R}_B$.
\end{theorem}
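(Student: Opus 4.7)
The plan is to establish countable additivity of $\wt{\rho}$ on the field $\mathcal{R}_B$ of cylinder sets; once this holds, Carath\'eodory's extension theorem provides a unique countably additive extension to the generated $\sigma$-field. Since $\wt{\rho}$ is a finitely additive probability measure, countable additivity is equivalent to continuity at the empty set: for every decreasing sequence $E_n \in \mathcal{R}_B$ with $\bigcap_n E_n = \emptyset$, one must show $\wt{\rho}(E_n) \to 0$.

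First I would verify that $\wt{\rho}$ is well-defined and finitely additive on $\mathcal{R}_B$. Any cylinder set $E \in \mathcal{R}_B$ is determined by a finite tuple $y_1, \ldots, y_n \in B^*$ together with a Borel set $F \subset \R^n$; restricting each $y_j$ to $H$ through the embedding $B^* \hookrightarrow H^* \equiv H$ produces an $H$-cylinder set whose $\rho$-measure is, by definition, $\wt{\rho}(E)$. Consistency under different representations of the same $B$-cylinder and finite additivity then transfer directly from the corresponding properties of $\rho$ on $\mathcal{R}$.

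The heart of the proof is a tightness estimate. Given $\eps > 0$, I would apply the measurability of $|||\cdot|||$ iteratively to produce an increasing sequence of projections $\mathbb{P}_0 \leq \mathbb{P}_1 \leq \cdots$ in $\mathcal{F}$ and scales $\eps_k > 0$ with $\sum_k \eps_k < \eps$, such that $\rho(|||(\mathbb{P}_{k+1} - \mathbb{P}_k) x||| > \eps_k) < \eps_k$ for every $k$. Choose also $M$ with $\rho(|||\mathbb{P}_0 x||| > M) < \eps$, and define
\[
K = \big\{x \in B : |||\mathbb{P}_0 x||| \leq M, \ |||(\mathbb{P}_{k+1} - \mathbb{P}_k) x||| \leq \eps_k \text{ for every } k \geq 0\big\}.
\]
The finite-level truncations of $K$ are genuine cylinder sets with $\wt{\rho}$-measure at least $1 - 2\eps$, while summability of $\{\eps_k\}$ together with a diagonal/Arzel\`a-type argument inside the finite-dimensional blocks $(\mathbb{P}_{k+1}-\mathbb{P}_k)H$ shows that $K$ is totally bounded, hence relatively compact, in the Banach space $(B, |||\cdot|||)$.

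Granted such a $K$, continuity at the empty set follows by a standard inner-regularity argument. Approximate each $E_n$ from within by a closed cylinder $C_n \subset E_n$ with $\wt{\rho}(E_n \setminus C_n) < \eps \cdot 2^{-n}$; then $\overline{K} \cap C_n$ forms a decreasing sequence of compact subsets of $B$ whose intersection is contained in $\bigcap_n E_n = \emptyset$, so by the finite intersection property $\overline{K} \cap C_n = \emptyset$ for all sufficiently large $n$, which forces $\wt{\rho}(E_n) \leq 3\eps$. The main obstacle, and the only genuinely nontrivial step, is this compactness construction: the measurability hypothesis controls only one orthogonal complement at a time, so achieving simultaneous control across all scales $k$ requires both the summability of the $\eps_k$ and a careful precompactness verification inside each block $(\mathbb{P}_{k+1}-\mathbb{P}_k)H$. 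Everything else is routine measure-theoretic bookkeeping.
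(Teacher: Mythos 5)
First, a point of reference: the paper offers no proof of this statement at all --- it is quoted verbatim from Gross \cite{GROSS} (see also Kuo \cite{HUO}) as background for Section 3 --- so your sketch has to stand on its own. Its overall shape is the right one (Gross's original route): reduce countable additivity on the field $\mathcal{R}_B$ to continuity at $\emptyset$, use the measurability of $|||\cdot|||$ to build a set $K$ with summable block tolerances $\eps_k$ whose $B$-closure is compact, and finish by inner regularity plus the finite intersection property. The construction of $K$ is essentially fine as a sketch (modulo defining $K$ inside $H$ and passing to its $B$-closure, since the projections $\mathbb{P}_k$ do not act on all of $B$, and modulo $|||\cdot|||$ being a norm on each block so that the block pieces are compact).

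The genuine gap is the final inference: from $\overline{K}\cap C_n=\emptyset$ you conclude $\wt{\rho}(E_n)\le 3\eps$, but this needs the statement ``every cylinder set disjoint from $K$ has $\wt{\rho}$-measure at most $2\eps$,'' and nothing you established yields it. Your measure control on $K$ comes only through its finite-level truncations $K_m$, which are cylinder sets based on the chosen projections $\mathbb{P}_m$; the sets $C_n$, however, are based on the arbitrary finite-dimensional projections $Q_n$ underlying the given sequence $E_n$, and disjointness of a cylinder set from $K=\bigcap_m K_m$ does not imply disjointness from any single $K_m$ (the $K_m$ are infinite-dimensional cylinders, so no compactness helps there). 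Estimating the measure of an arbitrary cylinder set disjoint from a compact set --- equivalently, showing that the image of $K$ under an arbitrary finite-dimensional projection $Q$ carries Gaussian mass at least $1-\eps$ --- is precisely the heart of Gross's theorem, not routine bookkeeping. Two ways to close it: (i) since continuity at $\emptyset$ is checked one fixed sequence $E_n$ at a time, enlarge the increasing sequence $\mathbb{P}_k$ so that it dominates every $Q_n$, and choose the closed inner approximations $C_n$ based on the same $Q_n$; then if a cylinder set $C$ based on $Q$ with $Q\mathbb{P}_m=Q$ meets $K_m$ at some $x$, the point $\mathbb{P}_m x\in H$ lies in $C\cap K$ (its blocks beyond level $m$ vanish), so $C\cap K=\emptyset$ forces $C\cap K_m=\emptyset$ and hence $\wt{\rho}(C)\le 1-\wt{\rho}(K_m)\le 2\eps$, which completes your argument; or (ii) bypass compactness entirely, as in Kuo \cite{HUO}: the measurability hypothesis makes the Gaussian partial sums associated with $\mathbb{P}_k$ Cauchy in probability in the $B$-norm, Borel--Cantelli (or the It\^o--Nisio theorem) upgrades this to almost sure convergence, and the law of the limiting $B$-valued random variable is the desired countably additive extension of $\wt{\rho}$.
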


In the present context, let $H = H^1_0 \times H^1_0$ and   
$B = H^{s_1, s_2} \times H^{s_1, s_2}$ with $0 < s_1 < \frac{1}{2} < s_2 < 1$. 
Then, it basically follows from (the proof of) Lemma  \ref{LEM:tight}
that the seminorms  $\|\cdot\|_{B}$ is measurable.
Hence, $(i, H, B)$ is an abstract Wiener space, 
and $\rho$ in \eqref{Wiener} is countably additive in $B$.

Next, we consider the full Gibbs measure ``$d \mu = Z^{-1} \exp(- H(\phi, \psi) ) \prod_{x\in \T} d \phi(x) \otimes d\psi(x)$"
where $H(\phi, \psi) =\frac{1}{2} \int \phi_x^2 +  \al \psi_x^2 - \phi \psi^2 dx$.
As for the KdV case, $\int \phi \psi^2 dx$ is not sign-definite
and thus we restrict ourselves to the ball of radius $B>0$ in $L^2 \times L^2$. See \cite{LRS}, \cite{BO4}.

Let $ \Omega_N = \{(a_n, b_n) :{ 0 \leq n \leq N } \}$
and $\Omega = \{(a_n, b_n) :{ n \geq 0} \}$.
Let $B$ be a cutoff on the $L^2$  norm and consider the ball in $\mathbb{C}^{N+1} \times \mathbb{C}^{N+1}$ given by
\[ \Omega_{N, B} = \big\{  (a_n, b_n)_{ 0 \leq n \leq N } : \| (a_n, b_n) \|_{L^2_n}  \leq B \big\} .\]

\noindent
(Recall $a_{-n} = \cj{a_n}$ and $b_{-n} = \cj{b_n}$.) 
Also, define 
$ \Omega_B = \left\{  (a_n, b_n)_{ n \geq 0} : \| (a_n, b_n) \|_{L^2_n}  \leq B \right\}. $
Let $\mathbb{P}_N$ be the projection onto the Fourier modes $\leq N $ given by
$ \mathbb{P}_N \phi =\phi^N =  \sum_{|n| \leq N} a_n e^{ i nx}. $
Then, we have
$\rho \circ \mathbb{P}_N^{-1}  = \rho_N$.
Now, define the weighted Wiener measure $\mu_N$ on $\mathbb{C}^{N+1} \times \mathbb{C}^{N+ 1} 
= \big\{ (a_n, b_n):{0 \leq n \leq N } \big\}$ 
 by
\begin{equation} \label{GmuN}
 d \mu_N = Z_N^{-1} \exp \bigg( \frac{1}{2} \int \mathbb{P}_N \phi (\mathbb{P}_N \psi )^2dx \bigg) \chi_{\Omega_{N, B}} \ d (a_0,  b_0) \otimes d \rho_N,  
\end{equation}

\noindent
where
$ Z_N = \int_{\mathbb{C}^{N+1} \times \mathbb{C}^{N+ 1} }
\exp \big( \frac{1}{2} \int \mathbb{P}_N \phi (\mathbb{P}_N \psi \big)^2dx \big) \chi_{\Omega_{N, B}} \ d  (a_0,  b_0) \otimes d \rho_N $,
and $d a_0$ and $db_0$ are the Lebesgue measures on $\mathbb{C}$.
Similarly,  the weighted Wiener measure $\mu$ on $\big\{ (a_n, b_n):{  n \geq 0 } \big\}$ by
\begin{equation} \label{Gmu}
 d \mu = Z^{-1} \exp \left( \frac{1}{2} \int \phi  \, \psi^2dx \right) \chi_{\Omega_B} \ d  (a_0,  b_0) \otimes d \rho, 
\end{equation}

\noindent
where
$Z = \int \exp \left( \frac{1}{2} \int \phi  \, \psi^2dx \right) \chi_{\Omega_B} \ d  (a_0,  b_0) \otimes d \rho. $
At this point, $Z$ need not be finite.
Indeed, the result below follows from \cite{LRS} and   \cite{BO4}.

\begin{lemma} \label{LEM:GABSCONTI}
For any $ r < \infty$, we have
\begin{align} \label{GABSCONTI1}
  \exp \left( \frac{1}{2} \int \mathbb{P}_N \phi (\mathbb{P}_N \psi )^2dx \right) \chi_{\Omega_{N, B}}
& \in L^r ( d (a_0, b_0) \otimes d \rho_N ) 
\\ \label{GABSCONTI2}
 \exp \left( \frac{1}{2} \int \phi  \, \psi^2dx \right) \chi_{\Omega_B} 
 & \in L^r (d (a_0, b_0) \otimes d \rho).
\end{align}
\end{lemma}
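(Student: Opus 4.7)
The proof follows the strategy of Lebowitz-Rose-Speer \cite{LRS} and Bourgain \cite{BO4}. Although $\int \phi\psi^2 \, dx$ is cubic in the field and not sign-definite, the $L^2$ cutoff $\chi_{\Omega_B}$ lets us reduce exponential integrability of this cubic expression to that of a quantity which is \emph{linear} in the Gaussian field; the latter is integrable of every order by Fernique's theorem.

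Step 1 (H\"older reduction). I would apply the trilinear H\"older bound
\[
\Big| \int \phi \psi^2 \, dx \Big| \leq \|\phi\|_{L^\infty} \|\psi\|_{L^2}^2,
\]
and its analog with $\mathbb{P}_N\phi, \mathbb{P}_N\psi$. On $\Omega_B$ (resp.\ $\Omega_{N,B}$) the second factor is bounded by $B^2$, so after taking the $r$-th power the exponent is pointwise at most $\tfrac{rB^2}{2}\|\phi\|_{L^\infty}$. Writing $\phi = a_0 + \wt\phi$ with $\wt\phi = \sum_{n\neq 0} a_n e^{inx}$, the cutoff forces $|a_0|,|b_0|\leq B$, so the $(a_0,b_0)$-integration is against Lebesgue measure on a bounded subset of $\mathbb{C}^2$ and contributes only a finite factor. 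The problem reduces to showing
\[
\int \exp\bigl( c \, \|\wt\phi\|_{L^\infty} \bigr) \, d\rho < \infty \text{ for every } c > 0,
\]
together with its finite-dimensional analog for $\mathbb{P}_N \wt\phi$.

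Step 2 (Gaussian concentration). Since $s_2 > 1/2$ gives $H^{s_1, s_2} \hookrightarrow L^\infty$, the seminorm $\|\cdot\|_{L^\infty}$ is measurable in the sense of Gross, so $\wt\phi$ is a centered Gaussian random element in the separable Banach space $C(\T)$. Fernique's theorem yields $\kappa, C>0$ with
\[
\rho\bigl(\|\wt\phi\|_{L^\infty}>\lambda\bigr)\leq C e^{-\kappa \lambda^2}, \qquad \lambda > 0,
\]
and the layer-cake identity gives
\[
\int e^{c \|\wt\phi\|_{L^\infty}} \, d\rho = 1 + c \int_0^\infty e^{c\lambda}\rho\bigl(\|\wt\phi\|_{L^\infty}>\lambda\bigr)\,d\lambda \lesssim 1 + c \int_0^\infty e^{c\lambda -\kappa \lambda^2} \, d\lambda < \infty
\]
for every $c > 0$, the Gaussian factor dominating the linear one. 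This proves \eqref{GABSCONTI2}; the statement \eqref{GABSCONTI1} follows verbatim after replacing $\wt\phi$ by $\mathbb{P}_N\wt\phi$, which is a centered Gaussian in a finite-dimensional subspace of $C(\T)$.

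The main technical point — and the only step requiring nontrivial input — is the Fernique-type sub-Gaussian tail for $\|\wt\phi\|_{L^\infty}$. This is what forces the $L^\infty$ H\"older estimate above: any bound placing two factors of $\psi$ in $L^4$ (or in $H^{s}$ with $s>0$) would require exponentiating a \emph{squared} Gaussian-type norm, and Fernique only guarantees integrability of such a square for sufficiently small coefficient, which would fail the claim for all $r<\infty$. A uniform-in-$N$ version of the Fernique estimate is also readily available (using that $\mathbb{P}_N\wt\phi \to \wt\phi$ almost surely in $C(\T)$ together with the quantitative form of Fernique), and will be useful in Section 6 when passing from invariance of $\mu_N$ to invariance of $\mu$.
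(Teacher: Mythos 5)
Your strategy is the right one, and it is essentially the argument behind the references to which the paper defers (the paper gives no proof of this lemma beyond citing \cite{LRS} and \cite{BO4}): on the support of $\chi_{\Omega_B}$ the cubic term obeys $\big|\int\phi\psi^2\,dx\big|\le\|\phi\|_{L^\infty}\|\psi\|_{L^2}^2\le B^2\|\phi\|_{L^\infty}$, the $(a_0,b_0)$-integration takes place over a bounded set because of the $L^2$ cutoff, and an exponential of a quantity \emph{linear} in $\|\phi\|_{L^\infty}$ is integrable of every order against the Gaussian once a sub-Gaussian tail for $\|\phi\|_{L^\infty}$ is available. The H\"older reduction, the layer-cake computation, and the verbatim transfer to $\mathbb{P}_N\phi,\mathbb{P}_N\psi$ for \eqref{GABSCONTI1} are all correct.

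The one genuine flaw is your justification of the Fernique step. The claimed embedding $H^{s_1,s_2}\hookrightarrow L^\infty$ is false in the range $s_1<\frac12<s_2<1$: take $\ft{\phi}(n)=\jb{n}^{-1}(\log\jb{n})^{-1}$ for $|n|\ge2$; this $\phi$ lies in $H^{s_1,s_2}$ (since $s_2<1$ and $s_1<\frac12$), yet its Fourier coefficients are nonnegative with divergent sum, so by testing against Fej\'er means at $x=0$ it cannot agree a.e.\ with any bounded function. What your argument actually needs is not a deterministic embedding of the abstract Wiener space $B$, but the statement that $\|\cdot\|_{L^\infty}$ is itself $\rho$-a.s.\ finite, i.e.\ that $\wt\phi(\omega)=\sum_{n\ne0}n^{-1}f_n(\omega)e^{inx}$ defines a Gaussian random element of $C(\T)$ (equivalently, that $\|\cdot\|_{L^\infty}$ is a measurable seminorm on $H^1_0$ for this Gaussian); only then do Fernique or Borell-type bounds apply to the $L^\infty$ norm, and with constants that can be taken uniform in $N$ for the truncations, as you indicate. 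This fact is true and classical --- it is the a.s.\ uniform convergence of the Fourier series of the periodic Brownian bridge, or it can be extracted from a dyadic-block tail estimate of exactly the type the paper runs in the Appendix for Lemma \ref{LEM:tight} --- but it must be cited or proved rather than deduced from the nonexistent embedding. A minor aside: your closing claim that any estimate placing two factors of $\psi$ in $L^4$ or $H^{s}$, $s>0$, would necessarily fail is too strong, since \cite{LRS} handle nonlinearities up to degree six under an $L^2$ cutoff by finer large-deviation arguments; nothing in your proof depends on that remark.
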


\noindent
In particular, $d\mu$ is a probability measure.
Moreover, we have  $d\mu_N \ll d(a_0, b_0) \otimes d\rho_N$ and   $d\mu \ll d(a_0, b_0) \otimes d\rho$.
For our application (in Theorem \ref{THM:GWP1}), we assume that $u_0$ (and $u(t)$ for any $t$) has mean 0.
Hence, in the following, we let $d a_0$ in \eqref{GmuN} and \eqref{Gmu}
to be the delta measure at $n = 0$ rather than the Lebesgue measure on $\mathbb{C}$.
Note that $da_0$ plays no significant role in any case. 

Finally, define $\Omega_{N, B} (s_1, s_2, K)$ and $\Omega_{B} (s_1, s_2, K)$ by

\begin{align*}
 \Omega_{N, B} (s_1, s_2, K) & = \Big\{  (a_n, b_n)_{ 0\leq n \leq N }  \in \Omega_{N, B}: 
\Big\| \sum_{ |n| \leq N} (a_n, b_n) e^{i n x} \Big\|_{H^{s_1, s_2} }  \leq K \Big\}  
\\ 
 \Omega_{ B} (s_1, s_2, K) & = \Big\{  (a_n, b_n)_{ n \geq 0 }  \in \Omega_{ B}: 
\Big\| \sum_{ n \in \mathbb{Z} } (a_n, b_n) e^{i n x} \Big\|_{H^{s_1, s_2} }  \leq K \Big\}.  
\end{align*}

\noindent
Then, we have 
\begin{lemma} [tightness] \label{LEM:tight}
Let $0 < s_1 < \frac{1}{2} < s_2 < 1$.
Then, for large $K > 0$, there exists $c > 0$,  independent of $N$,  such that
\begin{align}
\label{tight1}
 \mu_N \big(\Omega_{N, B} \setminus \Omega_{N, B} (s_1, s_2, K) \big)  \leq e^{-cK^2} 
\text{ and } \
 \mu \big(\Omega_{ B} \setminus  \Omega_{ B} (s_1, s_2, K) \big)  \leq e^{-cK^2}.
\end{align}
\end{lemma}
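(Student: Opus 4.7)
The plan is a two-step reduction: transfer the tail estimate from the weighted Wiener measures $\mu_N, \mu$ to the Gaussian measures $\rho_N, \rho$ via absolute continuity, and then establish the Gaussian tail bound using the explicit Fourier series representation. For the transfer, Lemma \ref{LEM:GABSCONTI} provides, for every finite $r$, an $L^r(d(a_0,b_0)\otimes d\rho_N)$-bound on the density $d\mu_N/d(d(a_0,b_0)\otimes d\rho_N)$ (and analogously for $\mu$); the crucial point is that this bound can be taken uniform in $N$. H\"older's inequality with conjugate exponents $(r,r')$ then yields $\mu_N(A)\lesssim \rho_N(A)^{1/r'}$ for any measurable event $A$, and likewise for $\mu$. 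Since raising to the power $1/r'$ preserves the form $e^{-cK^2}$ (with a smaller constant), it suffices to prove $\rho_N(\|(\mathbb{P}_N\phi,\mathbb{P}_N\psi)\|_{H^{s_1,s_2}}>K)\leq e^{-cK^2}$ uniformly in $N$, and the analogous bound for $\rho$.

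For the Gaussian tail, under $\rho_N$ the Fourier modes are represented as $a_n=n^{-1}g_n(\omega)$, $b_n=(\sqrt{\al}\,n)^{-1}h_n(\omega)$ with $\{g_n,h_n\}$ i.i.d.\ standard complex Gaussians, and I split $\|(\phi^N,\psi^N)\|_{H^{s_1,s_2}}$ into the two $H^{s_1}$ components plus the two weighted suprema, estimating each piece separately. For the Sobolev piece, $\|\phi^N\|_{H^{s_1}}^2\sim\sum_{1\leq n\leq N}n^{2s_1-2}|g_n|^2$, which has finite expectation since $s_1<\tfrac12$; a Chernoff argument using the moment generating function $E[e^{\ld|g_n|^2}]=(1-2\ld)^{-1}$ for small $\ld>0$, together with the summability of $n^{2s_1-2}$, gives $\rho_N(\|\phi^N\|_{H^{s_1}}>K)\lesssim e^{-cK^2}$ uniformly in $N$. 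For the weighted supremum, $\sup_n\jb{n}^{s_2}|a_n|=\sup_n\jb{n}^{s_2-1}|g_n|$, and a union bound combined with the Gaussian tail $P(|g_n|>t)\leq 2e^{-t^2/2}$ gives $\rho_N(\sup_n\jb{n}^{s_2}|a_n|>K)\leq 2\sum_{n\in\Z}e^{-\frac12 K^2\jb{n}^{2(1-s_2)}}$, which is $\lesssim e^{-cK^2}$ for $K$ large since $s_2<1$ makes $\jb{n}^{2(1-s_2)}$ unbounded and the remaining series converges rapidly (essentially as a Gaussian sum). The same estimates apply to $\psi^N$, and the argument for $\rho$ is identical, the sums simply being extended to all $n\geq 1$.

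The principal technical point is the uniformity in $N$ of the $L^r$ density bound in Step 1: it is precisely this uniformity that allows a single fixed $r'$ to serve every $N$. This rests on the $L^2$-cutoff $\chi_{\Omega_{N,B}}$ taming the non-sign-definite factor $\exp(\tfrac12\int\phi\psi^2)$ in the spirit of Lebowitz--Rose--Speer \cite{LRS} and Bourgain \cite{BO4}, together with tight control of the normalizing constants $Z_N$. Once this ingredient is in place, the Gaussian tails above are routine and \eqref{tight1} follows by combining them by a final union bound with the H\"older reduction, with $c>0$ depending only on $s_1,s_2,\al,B$, and the choice of $r'$.
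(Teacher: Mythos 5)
Your proposal is correct, and its first step (transferring the tail bound from $\mu_N,\mu$ to $\rho_N,\rho$ by H\"older's inequality together with the uniform-in-$N$ $L^r$ bound of Lemma \ref{LEM:GABSCONTI}) is exactly the paper's reduction; you are right that the uniformity in $N$, inherited from the Lebowitz--Rose--Speer/Bourgain construction with the $L^2$-cutoff, is what makes the constant $c$ independent of $N$. Where you diverge is in the Gaussian tail estimates, and your route is the more elementary one. For the $H^{s_1}$ component the paper does not use a Chernoff/moment-generating-function bound: it runs Bourgain's dyadic argument, first absorbing the low frequencies $|n|\leq M_0$ with the $L^2$-ball constraint (choosing $M_0\sim (K/B)^{1/s_1}$) and then estimating chi-squared tails on dyadic blocks $|n|\sim M_j$ with thresholds $\sigma_j K$; this yields the stronger bound $e^{-cK^2M_0^{2-2s_1}}$, of which only $e^{-cK^2}$ is needed, while your MGF argument (valid since $\sum n^{2s_1-2}<\infty$ for $s_1<\tfrac12$, with $\lambda$ small independent of $N$) gives the stated bound directly and without invoking the cutoff. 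For the weighted supremum the paper appeals to Fernique's theorem for the abstract Wiener space $(i,H^1_0,B^{s_2})$, verifying measurability of the seminorm via Egoroff; your union bound over the Gaussian tails $e^{-cK^2\jb{n}^{2(1-s_2)}}$, summable because $s_2<1$, replaces that abstract machinery with an explicit computation and even exhibits the constants. The only point worth a remark is the zero modes: under the paper's conventions $a_0=0$ (mean-zero $u$) and $|b_0|\leq B$ on the support of the cutoff, so they contribute at most $B<K/4$ to the supremum and need not be treated by a Gaussian tail; with that noted, your argument is complete and equivalent in strength to the paper's.
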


\noindent
The proof is analogous to that of Lemma \ref{LEM:GABSCONTI} in \cite{BO4}.
We prove Lemma \ref{LEM:tight} in Appendix.

\section{Function Spaces and Linear Estimates}
In this section, we go over the basic function spaces and linear estimates 
needed to establish Theorems \ref{THM:LWP1}.
First, we define a variant of the Bourgain spaces for $H^{s_1, s_2}$ defined in \eqref{GibbsIV}.
Let $X^{s_2, \infty,  b}$ and $X_\al^{s_2, \infty,  b}$ be the space given by the norms
\begin{align*}
& \|u\|_{X^{s_2, \infty,  b}}  = \| \jb{n}^{s_2} \jb{\tau - n^3}^b \ft{u}(n, \tau) \|_{L^\infty_n L^2_\tau}
\\ 
&  \|v\|_{X_\al^{s_2, \infty,  b}}=  \| \jb{n}^{s_2} \jb{\tau - \al n^3}^b \ft{v}(n, \tau) \|_{L^\infty_n L^2_\tau}.
\end{align*}

\noindent
Recall that  when $b > \frac{1}{2}$, the $X^{s_1, b} \times X_\al^{s_1, b}$ norm controls the $C([-T, T] ; H^{s_1} \times H^{s_1})$ norm.
This, however, does not hold when $b = \frac{1}{2}$. 
Now, define  $Y^{s_1, s_2}$  and $Y_\al^{s_1, s_2}$ where the norm is given by
\begin{align*}
\| u \|_{Y^{s_1, s_2}} = \|u\|_{Y^{s_1}} + \|u\|_{Y^{s_2, \infty}},
\text{ and } \| v \|_{Y_\al^{s_1, s_2}} = \|v\|_{Y_\al^{s_1}} + \|v\|_{Y_\al^{s_2, \infty}}, 
\end{align*}

\noindent
where
\begin{equation*}
\|u\|_{Y^{s_1}} = \|u \|_{X^{s_1, \frac{1}{2}}} + \| \jb{n}^{s_1} \ft{u}(n, \tau) \|_{L^2_n L^1_\tau},
\text{ and }
\|u\|_{Y^{s_2, \infty}} = \|u \|_{X^{s_2, \infty, \frac{1}{2}}}+ \|\jb{n}^{s_2} \ft{u}(n, \tau)\|_{L^\infty_n L^1_\tau}.
\end{equation*}

\noindent
$Y_\al^{s_1}$ and $Y_\al^{s_2, \infty}$ for $v$ are analogously defined with the obvious change of $\tau - n^3$ by $ \tau - \al n^3$.
Recall (c.f. \cite{CKSTT4})
that the $Y^{s_1} \times Y_\al^{s_1}$ norm controls the $C([-T, T] ; H^{s_1} \times H^{s_1})$ norm.
Also, we have
$\sup_n \jb{n}^{s_2} | \ft{u}(n, t)| 
\leq \sup_n \jb{n}^{s_2} \int | \ft{u}(n, \tau)| d \tau \leq \| u \|_{Y^{s_2, \infty}} $
for any $t \in \mathbb{R}$.
Hence, the $Y^{s_1, s_2} \times Y_\al^{s_1, s_2} $ norm controls 
the $C([-T, T] ; H^{s_1, s_2} \times H^{s_1, s_2})$ norm.
We prove Theorem \ref{THM:LWP1} by  a contraction argument on a ball in $Y^{s_1, s_2} \times Y_\al^{s_1, s_2}$
for appropriate $s_1$, $s_2$.

Next, we discuss the linear estimates.
By writing \eqref{MB} in the integral form, we see that $(u, v)$ is a solution to \eqref{MB}
with the initial condition $(u_0, v_0)$ for $|t| \leq T \leq 1$ if and only if
\begin{equation*} 
\begin{pmatrix} 
u(t) \\ v(t) 
\end{pmatrix} 
=
\begin{pmatrix} 
\vphantom{\Big|} \eta(t) S(t) u_0 - \eta_{_T}(t) \int_0^t S(t - t') \dx \big( \frac{v^2}{2} \big) (t') dt' \\
\vphantom{\Big|} \eta(t) S_\al(t) v_0 - \eta_{_T}(t) \int_0^t S_\al(t - t') \dx \big( u v \big) (t') dt' \\
\end{pmatrix} ,
\end{equation*}

\noindent
where $S(t) = e^{-t \dx^3}$ and $S_\al(t) = e^{-\al t \dx^3}$.
First, note that  $(\eta(t) S(t) u_0)^{\wedge}(n, \tau) = \ft{\eta}(\tau - n^3) \ft{u_0}(n)$
and $(\eta(t) S_\al(t) v_0)^{\wedge}(n, \tau) = \ft{\eta}(\tau - \al n^3) \ft{v_0}(n)$.

\begin{lemma} \label{LEM:hom} The following estimates hold for any $s_1, s_2, b \in \R$.
\[ \|  \eta(t)S(t) u_0  \|_{Y^{s_1, s_2}} \lesssim \| u_0  \|_{H^{s_1, s_2}}, \text{ and } \ 
\| \eta(t)S_\al(t) v_0  \|_{Y_\al^{s_1, s_2}}  \lesssim \| v_0  \|_{H^{s_1, s_2}}.  \]
\end{lemma}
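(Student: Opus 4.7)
The plan is to reduce everything to the observation that the space-time Fourier transform of $\eta(t) S(t) u_0$ completely factors: namely, $(\eta(t)S(t)u_0)^{\wedge}(n,\tau) = \ft{\eta}(\tau - n^3)\ft{u_0}(n)$, and similarly for $S_\al$ with $n^3$ replaced by $\al n^3$. So the $n$- and $\tau$-dependence decouple up to the shift $n^3$, which is benign for $L^p_\tau$ norms.

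Given this, I would simply unpack the definition of the $Y^{s_1, s_2}$ norm and bound each of its four summands separately. Writing $\sigma = \tau - n^3$, each summand becomes a product of a norm in $n$ of $\jb{n}^{s_j}\ft{u_0}(n)$ and a norm in $\sigma$ of $\ft{\eta}(\sigma)$ or $\jb{\sigma}^{1/2}\ft{\eta}(\sigma)$. Concretely, for the four pieces I would show
\begin{align*}
\| \eta S(t) u_0\|_{X^{s_1, 1/2}} & \lesssim \|\jb{n}^{s_1}\ft{u_0}(n)\|_{L^2_n} \,\|\jb{\sigma}^{1/2}\ft{\eta}(\sigma)\|_{L^2_\sigma}, \\
\|\jb{n}^{s_1} (\eta S(t) u_0)^{\wedge}\|_{L^2_n L^1_\tau} & \lesssim \|\jb{n}^{s_1}\ft{u_0}(n)\|_{L^2_n}\,\|\ft{\eta}\|_{L^1_\sigma}, \\
\|\eta S(t) u_0\|_{X^{s_2, \infty, 1/2}} & \lesssim \sup_n \jb{n}^{s_2}|\ft{u_0}(n)|\cdot \|\jb{\sigma}^{1/2}\ft{\eta}(\sigma)\|_{L^2_\sigma}, \\
\|\jb{n}^{s_2}(\eta S(t) u_0)^{\wedge}\|_{L^\infty_n L^1_\tau} & \lesssim \sup_n \jb{n}^{s_2}|\ft{u_0}(n)|\cdot \|\ft{\eta}\|_{L^1_\sigma}.
\end{align*}
Summing the four bounds yields exactly $\|u_0\|_{H^{s_1}} + \sup_n \jb{n}^{s_2}|\ft{u_0}(n)| = \|u_0\|_{H^{s_1, s_2}}$, as desired.

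The only analytic input is that the $\sigma$-integrals are finite, independent of $n$: since $\eta \in C^\infty_c(\R)$ its Fourier transform $\ft{\eta}$ is Schwartz, so both $\|\ft{\eta}\|_{L^1_\sigma}$ and $\|\jb{\sigma}^{1/2}\ft{\eta}\|_{L^2_\sigma}$ are finite constants depending only on $\eta$. The change of variable $\sigma = \tau - n^3$ has Jacobian $1$ and eliminates the $n$-dependence inside the $\tau$-norms for each fixed $n$, which is what lets the argument factor.

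The estimate for $\eta(t) S_\al(t) v_0$ is identical, with the only change being to replace $\tau - n^3$ by $\tau - \al n^3$ everywhere; the change of variable $\sigma = \tau - \al n^3$ again removes the $n$-dependence from the $\tau$-integrals, so no $\al$-dependent constant arises beyond a harmless absolute constant. There is no real obstacle here; the lemma is essentially a bookkeeping exercise in separating the $n$ and $\tau$ variables in the Fourier representation of the free propagator, and it is the $C^\infty_c$ cutoff $\eta$ (Schwartz $\ft{\eta}$) that makes every $\sigma$-side norm finite.
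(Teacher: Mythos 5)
Your proof is correct and follows exactly the route the paper takes (and leaves as a routine computation): the identity $(\eta(t)S(t)u_0)^{\wedge}(n,\tau)=\ft{\eta}(\tau-n^3)\ft{u_0}(n)$ factors each of the four pieces of the $Y^{s_1,s_2}$ norm into an $n$-norm of $\ft{u_0}$ times a fixed $\tau$-norm of the Schwartz function $\ft{\eta}$, and likewise for $S_\al$ after shifting by $\al n^3$. No gaps; this is the intended bookkeeping argument.
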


\noindent
Now, let $-\frac{1}{2} < b' \leq 0 \leq b \leq b'+1$ and $T\leq 1$. 
Then, from (2.25) in Lemma 2.1 (ii) in Ginibre-Tsutsumi-Velo \cite{GTV}, we have
\begin{equation} \label{Duhamel1}
  \| \eta_{_T}  ( S *_R  F)  \|_{X^{s_1, b}} \lesssim T^{1-b + b'}\| F\|_{X^{s_1, b'}},
\end{equation}

\noindent
where $S*_R F(t) = \int_0^t S(t - t')  F (t')  dt'$.
From \cite[Lemma 7.2] {CKSTT4}, we have 
\begin{equation} \label{Duhamel2}  
\| \eta ( S*_R  F)  \|_{Y^{s_1}} \lesssim \| F\|_{Z^{s_1}}, \text{ and }
 \| \eta (S_\al*_R  G)\|_{Y_\al^{s_1}} \lesssim \|  G\|_{Z_\al^{s_1}}, 
\end{equation}
where
\[ \| u \|_{Z^{s_1}} = \| u \|_{X^{s_1, -\frac{1}{2}} 
}+ \big\| \jb{n}^{s_1} \jb{\tau - n^3}^{-1} \ft{u}(n, \tau) \big\|_{L^2_{n}L^1_\tau}\]
and $Z^{s_1}_\al$ is analogously defined with the change of $\tau - n^3$ by $\tau - \al n^3$.
Recall from \cite{BO1},  \cite{GTV} that a factor of $T^{0-}$ appears on the right hand sides of \eqref{Duhamel2}
if we replace $\eta$ by $\eta_{_T}$ in \eqref{Duhamel2}.
By the standard computation \cite{BO1}, we have
\begin{align*}
\eta ( t )  (S*_R F) (t)  
\sim \  & \eta(t) \sum_{k \geq 1} \frac{i^k t^k}{k!} \sum_{n \in \mathbb{Z}} e^{i(nx + n^3 t)} \int \eta(\tau - n^3) \big(\tau-n^3\big)^{k-1} \ft{F}(n, \tau) d \tau \\
& + \eta(t) \sum_{n \in \mathbb{Z}} e^{i nx } \int \frac{\big(1 - \eta\big) (\tau - n^3)}{\tau - n^3} e^{i \tau t} \ft{F}(n, \tau) d \tau \\
& + \eta(t) \sum_{n \in \mathbb{Z}} e^{i (nx+ n^3 t )} \int \frac{\big(1 - \eta\big) (\tau - n^3)}{\tau - n^3}  \ft{F}(n, \tau) d \tau \\
& =: \I + \II + \III.
\end{align*}

\noindent
Then, a direct computation shows that 
\begin{equation*}
\begin{cases}
\vphantom{\Big|}\|  \I  \|_{Y^{s_2, \infty}}, \, \|  \III  \|_{Y^{s_2, \infty}}  \lesssim \| \jb{n}^{s_2} \jb{\tau - n^3}^{-1} \ft{F}(n, \tau) \|_{L^\infty_n L^2_\tau}\\
\|  \II  \|_{Y^{s_2, \infty}}  \lesssim \|F\|_{X^{s_2, \infty, -\frac{1}{2}}} +  \| \jb{n}^{s_2} \jb{\tau - n^3}^{-1} \ft{F}(n, \tau) \|_{L^\infty_n L^2_\tau}.
\end{cases}
\end{equation*}

\noindent
i.e. we have 
\begin{equation} \label{Duhamel3}
\| \eta (S*_R F)\|_{Y^{s_2, \infty}}  \lesssim \|  F\|_{Z^{s_2, \infty}},
\end{equation}

\noindent
where 
\[ \| u \|_{Z^{s_2, \infty}} = \| u \|_{X^{s_2, \infty, -\frac{1}{2}}}
+ \| \jb{n}^{s_2} \jb{\tau - n^3}^{-1} \ft{u}(n, \tau) \|_{L^\infty_n L^2_\tau}.\]

\noindent
As before,  a factor of $T^{0-}$ appears on the right hand sides of \eqref{Duhamel3}
if we replace $\eta$ by $\eta_{_T}$ in \eqref{Duhamel3}.
Similarly, we have
$\| \eta (S_\al*_R G) \|_{Y_\al^{s_2, \infty}}  \lesssim \|  G \|_{Z_\al^{s_2, \infty}},$
where $Z_\al^{s_2, \infty}$ is analogously defined with the change of $\tau - n^3$ by $ \tau - \al n^3$.
\noindent
Lastly, define $Z^{s_1, s_2}$ and $Z_\al^{s_1, s_2}$ by 
\[ \| u \|_{Z^{s_1, s_2}} = \|u \|_{Z^{s_1}} +\|u \|_{Z^{s_2, \infty}} \text{ and }
\| v \|_{Z_\al^{s_1, s_2}} = \|v \|_{Z_\al^{s_1}} +\|v \|_{Z_\al^{s_2, \infty}}. \] 

\noindent
Then, we have the following Duhamel estimates for $b = \frac{1}{2}$ and $b' = -\frac{1}{2}$.

\begin{lemma} \label{LEM:duhamel2}
Let $0< T \leq 1$. Then, we have
\begin{equation*} 
\| \eta_{_T} (S*_R F)\|_{Y^{s_1, s_2}}  \lesssim T^{0-} \|  F\|_{Z^{s_1, s_2}}
\text{ and }\| \eta_{_T} (S_\al*_RG)\|_{Y_\al^{s_1, s_2}}  \lesssim T^{0-} \|  G\|_{Z_\al^{s_1, s_2}}.
\end{equation*}
\end{lemma}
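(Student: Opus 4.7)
The plan is to combine the two Duhamel estimates that are already in place in the excerpt and then upgrade the smooth cutoff $\eta$ to the sharp cutoff $\eta_{_T}$ at the cost of a $T^{0-}$ factor. Since by definition
$$\|u\|_{Y^{s_1,s_2}} = \|u\|_{Y^{s_1}} + \|u\|_{Y^{s_2,\infty}} \quad\text{and}\quad \|F\|_{Z^{s_1,s_2}} = \|F\|_{Z^{s_1}} + \|F\|_{Z^{s_2,\infty}},$$
it suffices to prove the two separate bounds
$$\|\eta_{_T}(S *_R F)\|_{Y^{s_1}} \lesssim T^{0-}\|F\|_{Z^{s_1}}, \qquad \|\eta_{_T}(S *_R F)\|_{Y^{s_2,\infty}} \lesssim T^{0-}\|F\|_{Z^{s_2,\infty}},$$
together with the analogous bounds for $S_\al *_R G$ in $Y_\al^{s_1,s_2}$.

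The smooth-cutoff versions are precisely \eqref{Duhamel2} and \eqref{Duhamel3}. The text already announces that replacing $\eta$ by $\eta_{_T}$ in those inequalities produces an extra factor $T^{0-}$; the task is to verify this explicitly. For the $X^{s_1,1/2}$ and $X^{s_2,\infty,1/2}$ parts of the norms, one invokes the standard trick of Ginibre--Tsutsumi--Velo \cite{GTV}, choosing $b = \tfrac12 - \eps$, $b' = -\tfrac12 + \eps$ in \eqref{Duhamel1} (and in the $X^{s,\infty,b}$ variant, which is proved identically) to obtain the gain $T^{1-b+b'} = T^{2\eps}$; the resulting $\eps$-loss in $\jb{\tau - n^3}$ is absorbed because in both $Z^{s_1}$ and $Z^{s_2,\infty}$ we have room relative to the exponent $-\tfrac12$. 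For the $L^2_n L^1_\tau$ and $L^\infty_n L^1_\tau$ pieces of the $Y$ norms, one redoes the decomposition $\I + \II + \III$ with $\eta_{_T}$ in place of $\eta$ and applies Cauchy--Schwarz in $\tau$ on $\supp \eta_{_T}$, which has measure $\lesssim T$; the resulting $T^{1/2}$ comfortably absorbs the $\tau$-loss produced by converting $\jb{\tau - n^3}^{-1}$ to $\jb{\tau - n^3}^{-1/2-}\jb{\tau - n^3}^{-1/2+}$, and leaves a net $T^{\eps}$.

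The main obstacle is purely bookkeeping: one must verify that each of the terms $\I$, $\II$, $\III$ and each of the norms $L^2_n L^1_\tau$, $L^\infty_n L^1_\tau$ can absorb the small $T^{\eps}$ factor without loss of spatial derivative in $n$. This is precisely why the $Z^{s_2,\infty}$ norm was defined with weight $\jb{\tau - n^3}^{-1}$ rather than $\jb{\tau - n^3}^{-1/2}$: the extra half-power of $\jb{\tau - n^3}^{-1}$ is exactly what pays for the Cauchy--Schwarz conversion in $\tau$. Once this is checked, the $\al$-equation bound $\|\eta_{_T}(S_\al *_R G)\|_{Y_\al^{s_1,s_2}} \lesssim T^{0-}\|G\|_{Z_\al^{s_1,s_2}}$ is a verbatim repetition with $n^3$ replaced by $\al n^3$ and $S$ replaced by $S_\al$ throughout, since the dispersion relation never plays any arithmetic role in the argument.
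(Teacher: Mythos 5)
Your reduction of the lemma to the two components $Y^{s_1}/Z^{s_1}$ and $Y^{s_2,\infty}/Z^{s_2,\infty}$ matches the structure of the paper, but the way you treat each component has a genuine gap at exactly the point that makes this lemma nontrivial: the endpoint $b=\tfrac12$. The $Y^{s_1}$ norm contains $X^{s_1,\frac12}$ (and $Y^{s_2,\infty}$ contains $X^{s_2,\infty,\frac12}$) with exponent exactly $\tfrac12$, and \eqref{Duhamel1} with $b=\tfrac12-\eps$, $b'=-\tfrac12+\eps$ gives you only the strictly weaker output norm $X^{s_1,\frac12-\eps}$ while demanding the strictly stronger input norm $X^{s_1,-\frac12+\eps}$. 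Neither deficit can be ``absorbed'': on the output side there is simply nothing to absorb it into, and on the input side $\|F\|_{X^{s_1,-\frac12+\eps}}$ is \emph{not} controlled by $\|F\|_{Z^{s_1}}$ --- test $\ft{F}(n,\tau)=\chi_{\{|\tau-n^3|\sim L\}}$ at a single frequency $n$, which gives $\|F\|_{X^{s_1,-\frac12+\eps}}\sim \jb{n}^{s_1}L^{\eps}$ while $\|F\|_{Z^{s_1}}\sim\jb{n}^{s_1}$. The ``room relative to $-\tfrac12$'' in the $Z$-norms goes the opposite way: they control weights at or below $-\tfrac12$, not above. This endpoint obstruction is precisely why the paper introduces the $L^1_\tau$-type components of $Y$ and $Z$ and proves \eqref{Duhamel2} via \cite[Lemma 7.2]{CKSTT4} and \eqref{Duhamel3} via the explicit $\I+\II+\III$ computation; the lemma is then the sum of those two estimates together with the standard fact (cited from \cite{BO1}, \cite{GTV}) that replacing $\eta$ by $\eta_{_T}$ costs a factor $T^{0-}$.

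Your treatment of the $L^2_nL^1_\tau$ and $L^\infty_nL^1_\tau$ pieces also does not go through as stated: ``Cauchy--Schwarz in $\tau$ on $\supp\eta_{_T}$'' conflates the time support of $\eta_{_T}$ (measure $\sim T$) with localization in the Fourier variable $\tau$; after taking Fourier transforms nothing is compactly supported in $\tau$, so there is no set of measure $\lesssim T$ over which to apply Cauchy--Schwarz, and the conversion of $\jb{\tau-n^3}^{-1}$ into $\jb{\tau-n^3}^{-\frac12-}\jb{\tau-n^3}^{-\frac12+}$ at the price of $T^{1/2}$ is not a justified step. Note also the direction of the $T$-factor: with the paper's conventions $T^{0-}=T^{-\eps}$ is a small \emph{loss} (later beaten by the $T^{\theta}$ gain extracted in Section 5 from time-localizing the bilinear estimates), whereas you set out to prove a positive-power gain; proving something stronger would of course suffice, but the two mechanisms you propose for producing any power of $T$ are exactly the steps that fail. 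To repair the argument, either prove (or quote) the multiplier bound for $u\mapsto\eta_{_T}u$ on the $b=\tfrac12$ spaces with constant $T^{0-}$ and combine it with \eqref{Duhamel2}--\eqref{Duhamel3}, or redo the $\I+\II+\III$ decomposition directly with $\eta_{_T}$, splitting at $\jb{\tau-n^3}\sim T^{-1}$, which is where the $T^{0-}$ factor actually comes from.
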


\section{New Local Well-Posedness Result for $\al \in (0, 4) \setminus\{1\}$}

In this section, we prove Theorem \ref{THM:LWP1} 
by constructing a contraction in $Y^{s_1, s_2} \times Y_\al^{s_1, s_2}$,
where  $\frac{1}{4} < s_1 < \frac{1}{2} < s_2 < 1 $ with $2s_1 > s_2 > \frac{1}{2} + \frac{1}{2}\max( \nu_{c_1},\nu_{d_1}, \nu_{d_2}) $ and 
$\max( \nu_{c_1},\nu_{d_1}, \nu_{d_2}) < 1$.
Once we prove the bilinear estimates
\begin{align} 
\| \dx( v_1 v_2 ) \|_{Z^{s_1, s_2}} & \lesssim \|v_1 \|_{Y_\al^{s_1, s_2}} \|v_2 \|_{Y_\al^{s_1, s_2}} 
\label{Xbilinear1} \\
\| \dx( u v) \|_{Z_\al^{s_1, s_2}} & \lesssim \| u \|_{Y^{s_1, s_2}} \| v \|_{Y_\al^{s_1, s_2}} \label{Xbilinear2}
\end{align}

\noindent
with the mean 0 condition on $u$, 
the local well-posedness of \eqref{MB} in $H^{s_1, s_2} \times H^{s_1, s_2}$ on a time interval of size $\sim 1$ follows
from Lemmata \ref{LEM:hom}, \ref{LEM:duhamel2},  \eqref{Xbilinear1}, and \eqref{Xbilinear2},
provided that $\|(u_0, v_0)\|_{H^{s_1, s_2} \times H^{s_1, s_2}} $ is sufficiently small.

To establish the LWP for the general initial data $(u_0, v_0)$ without the smallness assumption, 
we need to gain a positive power of $T$ from the bilinear estimates \eqref{Xbilinear1} and \eqref{Xbilinear2},
assuming that the functions are supported on the time interval $[-2T, 2T]$. 
In particular, we need to prove
\begin{align} 
 \left\| \eta_{_{2T}} \dx( v_1 v_2 ) \right\|_{Z^{s_1, s_2}} & \lesssim T^\theta\|v_1 \|_{Y_\al^{s_1, s_2}} \|v_2 \|_{Y_\al^{s_1, s_2}} 
\label{Xbilinear3} \\
 \left\| \eta_{_{2T}} \dx( u v) \right\|_{Z_\al^{s_1, s_2}} & \lesssim T^\theta \| u \|_{Y^{s_1, s_2}} \| v \|_{Y_\al^{s_1, s_2}},
\label{Xbilinear4}
\end{align}

\noindent
for some $\theta > 0$.
For the rest of this section, we first present the proof of \eqref{Xbilinear1} and \eqref{Xbilinear2}
in Propositions \ref{Gibbsbilinear1}, \ref{Gibbsbilinear3}, and \ref{Gibbsbilinear2}.
Then, we mention how to obtain the positive power of $T$ as in \eqref{Xbilinear3} and \eqref{Xbilinear4}.

First, recall the following result in \cite{OH1}. (See Remark \ref{REM:bilinear}.)

\begin{lemma} \label{LEM:oldbilinear} Let $s_1 \geq 0$.
Then, we have 
\begin{equation} \label{Xbilinear5}
\left\| \dx( v_1 v_2 ) \right\|_{Z^{s_1}} \lesssim \|v_1 \|_{Y_\al^{s_1}} \|v_2 \|_{Y_\al^{s_1}} 
\end{equation}

\noindent
on $\{|n| \lesssim 1\}$ or 
\begin{equation} \label{nonres1}
A = \{ (n, n_1, n_2) : n = n_1 + n_2, |n| \gtrsim 1, |n_1 - c_1 n| \geq 1 \text{ and } |n_1 - c_2 n| \geq 1\}, 
\end{equation}
where $n,$ $ n_1$, and $n_2$ are the spatial Fourier variables of $ v_1 v_2 $, $v_1$, and $v_2$.
Also, we have 
\begin{equation} \label{Xbilinear6}
\left\| \dx( u v) \right\|_{Z_\al^{s_1}} \lesssim \| u \|_{Y^{s_1}} \| v \|_{Y_\al^{s_1}},
\end{equation}

\noindent
on  $\{|n| \lesssim 1\}$ or 
\begin{equation} \label{nonres2}
C = \{ (n, n_1, n_2) : n = n_1 + n_2, |n| \gtrsim 1, |n_1 - d_1 n| \geq 1 \text{ and } |n_1 - d_2 n| \geq 1\},
\end{equation}
where $n,$ $ n_1$, and $n_2$ are the spatial Fourier variables of $ uv $, $u$, and $v$.

\end{lemma}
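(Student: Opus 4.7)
The plan is to reduce both \eqref{Xbilinear5} and \eqref{Xbilinear6} to a standard Bourgain-space bilinear analysis, with the key lower bound on the resonance function obtained from an algebraic factorization, as in \cite{OH1}. For the first estimate, the identity
\[ (\tau - n^3) - (\tau_1 - \al n_1^3) - (\tau_2 - \al n_2^3) = -(n^3 - \al n_1^3 - \al n_2^3) \]
together with the factorization
\[ n^3 - \al n_1^3 - \al n_2^3 = n\big[(1-\al)n^2 + 3 \al n n_1 - 3 \al n_1^2 \big] = -3 \al n (n_1 - c_1 n)(n_1 - c_2 n) \]
shows that on $A$ in \eqref{nonres1} one has $|n^3 - \al n_1^3 - \al n_2^3| \gtrsim |n|$, and hence $\max(\jb{\tau - n^3},\jb{\tau_1 - \al n_1^3},\jb{\tau_2 - \al n_2^3}) \gtrsim |n|$. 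The analogous factorization $\al n^3 - n_1^3 - \al n_2^3 = (\al - 1) n_1 (n_1 - d_1 n)(n_1 - d_2 n)$ in terms of the $d_j$ of \eqref{d_1 and d_2} gives $|\al n^3 - n_1^3 - \al n_2^3|\gtrsim |n|$ on $C$ and handles \eqref{Xbilinear6}.

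With the modulation lower bound in hand I would proceed by duality and dyadic decomposition in each spatial frequency and in each of the three modulation variables. The region $|n|\lesssim 1$ is trivial by Sobolev and the boundedness of $\dx$ on bounded frequencies, so assume $|n|\gtrsim 1$. Split into three subcases according to which of the three modulations is the largest $M_{\max}$; in each subcase the bound $|n|\lesssim M_{\max}$ converts half of the derivative loss $|n|$ coming from $\dx$ into a power $M_{\max}^{1/2}$, which is then absorbed by the weight on the largest modulation. The remaining weighted trilinear sum is estimated by Cauchy--Schwarz followed by a lattice counting estimate for the number of $n_1 \in \Z$ with $\al n_1^3 + \al(n-n_1)^3$ in a prescribed dyadic interval of length $M$. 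The $L^2_n L^1_\tau$ component of the $Z^{s_1}$ norm is controlled via Cauchy--Schwarz in $\tau$ against the integrable weight $\jb{\tau - n^3}^{-\frac{1}{2}-\eps}$, and the $L^1_\tau$ component of the $Y^{s_1}$ norm is used in the boundary situation $n_1 = 0$ appearing in \eqref{Xbilinear6} to avoid the logarithmic divergence that would otherwise arise in the $\tau_1$ integration.

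The main obstacle is closing the counting estimate at the endpoint $s_1 = 0$ in the subcases where $M_{\max}$ is one of $\jb{\tau_i - \al n_i^3}$: no smoothing is then available from $\jb{n_i}^{-s_1}$, so the remaining $|n|^{1/2}$ derivative loss must be absorbed entirely by the lattice count. The cubic $n_1 \mapsto \al n_1^3 + \al(n - n_1)^3$ has a critical point at $n_1 = n/2$, where this count would generically degenerate, but the non-resonance conditions $|n_1 - c_j n|\ge 1$ for $j = 1, 2$ built into $A$ (and the analogous ones for $C$) force $n_1$ to stay quantitatively away from that critical point, yielding the required bound $\lesssim 1 + M/|n|$ on the count; this is exactly what makes the bilinear estimates hold for all $s_1 \geq 0$ on the non-resonant regions $A$ and $C$.
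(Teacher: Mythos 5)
Your factorizations of the resonance functions are correct, but you stop at the lower bound $|n^3-\al n_1^3-\al n_2^3|\gtrsim |n|$ on $A$ (and likewise $\gtrsim|n|$ on $C$), which is too weak to close the estimate at $s_1=0$, and the device you propose to recover the resulting $|n|^{1/2}$ loss rests on a false claim. The conditions $|n_1-c_1n|\ge 1$, $|n_1-c_2n|\ge 1$ do \emph{not} keep $n_1$ away from the critical point $n_1=n/2$ of $n_1\mapsto \al n_1^3+\al(n-n_1)^3$: since $c_1+c_2=1$ and $c_1\ne\tfrac12$ for $\al\in(0,4)$, the excluded points $c_1n$ and $c_2n$ are symmetric about $n/2$ and lie at distance $\sim|n|$ from it, so $n_1\approx n/2$ belongs to $A$. (Nothing bad happens there anyway, since at $n_1\approx n/2$ one has $|n^3-\al n_1^3-\al n_2^3|\sim|1-\tfrac{\al}{4}||n|^3$; the degeneracy of the level-set count and the smallness of the resonance function live at different places.) Hence your claimed count $\lesssim 1+M/|n|$ is not a consequence of the non-resonance conditions, and the endpoint $s_1=0$ does not close in your scheme.

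The missing observation --- and the one the paper actually uses, quoting it from \cite{OH1} (see \eqref{Gnonresonance1} and the discussion around \eqref{res2}) --- is that your own factorization gives much more on the non-resonant sets: the two roots $c_1n,c_2n$ are separated by $|c_1-c_2||n|\sim|n|$, so on $A$ at least one of $|n_1-c_1n|,\,|n_1-c_2n|$ is $\gtrsim|n|$ while the other is $\ge 1$, whence $|n^3-\al n_1^3-\al n_2^3|=3\al|n|\,|n_1-c_1n|\,|n_1-c_2n|\gtrsim n^2$; similarly $|\al n^3-n_1^3-\al n_2^3|=|\al-1|\,|n_1|\,|n_1-d_1n|\,|n_1-d_2n|\gtrsim |n_1 n|$ on $C$, and $\gtrsim |n_1|n^2$ when $|n_1|\ll|n|$ or $|n_2|\ll|n|$. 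With these bounds the multiplier $|n|\jb{n}^{s_1}\jb{n_1}^{-s_1}\jb{n_2}^{-s_1}\MAX^{-1/2}$ is already bounded at $s_1=0$, and the lemma reduces to the standard bilinear argument for KdV at $s\ge 0$ of \cite{BO1}; this is exactly how the paper disposes of it (it gives no new proof, only this reduction). Two further corrections: for \eqref{Xbilinear6} the frequency $n_1=0$ lies in $C$ and annihilates all modulation gain, so it must be removed by the mean-zero assumption on $u$ (as the paper assumes throughout; without it \eqref{bilinear2} fails for every $s$), not handled by the $L^1_\tau$ components of the norms --- those are there for the usual $b=\pm\tfrac12$ endpoint issues in $\tau$, independently of $n_1=0$.
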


\noindent
We point out that the proof of \eqref{Xbilinear5} and \eqref{Xbilinear6}
are basically the same as that of the bilinear estimate for KdV for $s \geq 0$ in \cite{BO1}.
Hence, by assuming that $v_1v_2$ in \eqref{Xbilinear5} and $uv$ in \eqref{Xbilinear6}
are supported on time interval $[-2T, 2T]$, we gain a positive power $T^\theta$
on the right hand sides.  
For details, see \cite{BO1}.

Now, we prove \eqref{Xbilinear1} in Propositions \ref{Gibbsbilinear1} and \ref{Gibbsbilinear3}.

\begin{proposition} \label{Gibbsbilinear1}
Assume $\nu_{c_1} <1$.
Then, for  $\frac{1}{4} \leq s_1 < \frac{1}{2} < s_2 < 1$ with $ s_2 > \frac{1}{2} + \frac{1}{2}\nu_{c_1}$, we have
\begin{equation}
\left\| \dx( v_1 v_2 ) \right\|_{Z^{s_1}} \lesssim \|v_1 \|_{Y_\al^{s_1, s_2}} \|v_2 \|_{Y_\al^{s_1, s_2}} .
\end{equation}
\end{proposition}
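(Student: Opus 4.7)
Lemma \ref{LEM:oldbilinear} already proves the bound \eqref{Xbilinear5} (and hence Proposition \ref{Gibbsbilinear1} since $\|\cdot\|_{Y_\al^{s_1}}\leq\|\cdot\|_{Y_\al^{s_1,s_2}}$) on $\{|n|\lesssim 1\}$ and on the non-resonant set $A$ from \eqref{nonres1}. What remains is the contribution of the resonant region
$\mathcal{R}:=\{(n,n_1,n_2):n=n_1+n_2,\,|n|\gg 1,\,|n_1-c_1n|<1 \text{ or } |n_1-c_2n|<1\}.$
Because $c_1,c_2\notin\mathbb{Q}$ (as $\nu_{c_1}<\infty$), for each fixed $n$ with $|n|\gg 1$ there are at most two admissible values $n_1^{(1)}(n),n_1^{(2)}(n)$, namely the nearest integers to $c_1n$ and $c_2n$; moreover $|n_1|,|n_2|\sim|n|$ on $\mathcal R$. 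The Diophantine bound \eqref{lowerbdC} further supplies
$\max(\jb{\tau-n^3},\jb{\tau_1-\al n_1^3},\jb{\tau_2-\al n_2^3})\gtrsim|n|^{1-\nu_{c_1}-\eps}$ on $\mathcal R$.

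\textbf{Three sub-cases from the Diophantine gain.} I decompose $\mathcal R=\mathcal R_0\cup\mathcal R_1\cup\mathcal R_2$ according to which of the three modulations realizes the maximum. In $\mathcal R_k$, the corresponding factor $\jb{\cdot}^{-1/2}$ obeys the pointwise bound $\lesssim|n|^{-(1-\nu_{c_1}-\eps)/2}$, providing the Diophantine gain in front of the residual convolution: for $\mathcal R_0$ the gain comes out of the weight $\jb{\tau-n^3}^{-1/2}$ of the $X^{s_1,-1/2}$-norm, while for $\mathcal R_j$ ($j=1,2$) it is absorbed into the $L^2_\tau$-norm of $\hat v_j$ via $\jb{\tau_j-\al n_j^3}^{-1/2}\mathbb{1}_{\mathcal R_j}\lesssim|n|^{-(1-\nu_{c_1}-\eps)/2}$.

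\textbf{Mixed-norm estimate.} The key device is to mix the two halves of $Y_\al^{s_1,s_2}$. Setting $f_j(n_j,\tau_j)=\jb{n_j}^{s_1}\jb{\tau_j-\al n_j^3}^{1/2}\hat v_j(n_j,\tau_j)$ so that $\|f_j\|_{L^2_{n_j,\tau_j}}\leq\|v_j\|_{Y_\al^{s_1}}$, and using the pointwise bound $\|\hat v_j(n_j,\cdot)\|_{L^1_\tau}\leq\jb{n_j}^{-s_2}\|v_j\|_{Y_\al^{s_2,\infty}}$, Young's inequality $L^2_{\tau_1}\ast L^1_{\tau_2}\to L^2_\tau$ gives on each $\mathcal R_k$ (for $\mathcal R_2$ the roles of $v_1,v_2$ are swapped by the symmetry of the product):
\[
\Big\|\tfrac{n\jb n^{s_1}}{\jb{\tau-n^3}^{1/2}}\mathbb{1}_{\mathcal R_k}(\hat v_1\ast\hat v_2)(n,\cdot)\Big\|_{L^2_\tau}\lesssim|n|^{\tfrac12-s_2+\tfrac{\nu_{c_1}+\eps}{2}}\sum_{j=1,2}\|f_1(n_1^{(j)}(n),\cdot)\|_{L^2_{\tau_1}}\|v_2\|_{Y_\al^{s_2,\infty}}.
\]

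\textbf{Closing the $\ell^2_n$ sum and main obstacle.} Squaring and summing in $n$ produces the weight $|n|^{1-2s_2+\nu_{c_1}+\eps}$. Here is the crux: using $Y_\al^{s_2,\infty}$ on both factors would require $s_2>\tfrac{s_1}{2}+\tfrac12+\tfrac{\nu_{c_1}}{4}$, which is stronger than the hypothesis and fails for $s_1$ near $\tfrac12$ with $\nu_{c_1}$ close to $1$. The mixed norm bypasses this because $n\mapsto n_1^{(j)}(n)$ is finite-to-one (with $n\sim n_1^{(j)}/c_j$), so reindexing yields
\[
\sum_n|n|^{1-2s_2+\nu_{c_1}+\eps}\|f_1(n_1^{(j)}(n),\cdot)\|_{L^2_{\tau_1}}^2\lesssim\sum_{n_1}|n_1|^{1-2s_2+\nu_{c_1}+\eps}\|f_1(n_1,\cdot)\|_{L^2_{\tau_1}}^2.
\]
The hypothesis $s_2>\tfrac12+\tfrac12\nu_{c_1}$ lets me choose $\eps>0$ with $1-2s_2+\nu_{c_1}+\eps<0$, so the weight is uniformly bounded and the sum collapses to $\|f_1\|_{L^2_{n_1,\tau_1}}^2\leq\|v_1\|_{Y_\al^{s_1}}^2$. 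The $L^2_nL^1_\tau$ piece of $\|\cdot\|_{Z^{s_1}}$ is handled by the same scheme, replacing the trivial bound $\jb{\tau-n^3}^{-1/2}\leq 1$ by Cauchy-Schwarz $\|\jb{\cdot-n^3}^{-1}h\|_{L^1_\tau}\lesssim\|h\|_{L^2_\tau}$ (with a further $|n|^{-(1-\nu_{c_1}-\eps)/2}$ gain available on $\mathcal R_0$ from restricting the range of integration).
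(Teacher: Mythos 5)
Your proposal is correct and follows essentially the same route as the paper's proof: reduce to the finitely many resonant modes $n_1$ near $c_1n,c_2n$ (where $|n|\sim|n_1|\sim|n_2|$), extract the Diophantine gain $|n|^{-(1-\nu_{c_1}-\eps)/2}$ from whichever modulation is maximal, place one factor in the $L^2$-based $Y_\al^{s_1}$ norm and the other in the $L^\infty_nL^1_\tau$-based $Y_\al^{s_2,\infty}$ norm so the multiplier is $\lesssim|n|^{\frac12-s_2+\frac{\nu_{c_1}+\eps}{2}}\lesssim 1$, and close with Young in $\tau$ and the finite-to-one counting in $n$, treating the $L^2_nL^1_\tau$ piece of $Z^{s_1}$ by Cauchy--Schwarz in $\tau$. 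The only differences (explicit three-case decomposition and reindexing, recovering the gain on $\mathcal{R}_0$ by restricting the $\tau$-integration range rather than splitting $\jb{\tau-n^3}^{-1}$ as $\jb{\tau-n^3}^{-\frac12-}\jb{\tau-n^3}^{-\frac12+}$) are cosmetic.
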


\begin{proof}  
In view of Lemma \ref{LEM:oldbilinear}, we restrict our attention 
to  
\begin{equation} \label{res1}
 B = \{ (n, n_1, n_2) \in \mathbb{Z}^3 : n = n_1 + n_2, |n|\gtrsim 1, | n_1 - c_1n| < 1 \text{ and } |n_1 - c_2 n| < 1 \}.
\end{equation}

\noindent
For fixed $n\in \mathbb{Z}$, there are only 4 values of $n_1$ in $B$, i.e.  $n_1 = [c_1 n ] , [c_1 n ] + 1, [c_2 n ] ,$ or $ [c_2 n ] + 1$,
where $[ \, \cdot \, ]  $ is the integer part function. 
Thus, there are 4 terms contributing in the convolution in the spatial Fourier variable.
Note that we have $|n| \sim |n_1| \sim |n_2|$ on $B$.
By the definition of the minimal type index $\nu_{c_1}$ (see \eqref{lowerbdC}), we have 
\begin{equation} \label{Gresonance1}
\MAX: = \max ( \jb{\tau - n^3}, \jb{\tau_1 - \al n_1^3}, \jb{\tau_2 - \al n_2^3} )\gtrsim |n^3 - \al n_1^3 - \al n_2^3| \gtrsim |n|^{1-\nu_{c_1}-\eps} 
\end{equation}

\noindent
for any $\eps > 0$.
Without loss of generality, assume $\jb{\tau - n^3}, \jb{\tau_1 - \al n_1^3} \gtrsim \jb{\tau_2 - \al n_2^3}$.

First, consider the $X^{s_1, - \frac{1}{2}}$ part of the $Z^{s_1}$ norm. 
It suffices to show
\begin{equation} \label{timelocal1}
\bigg\| \intt_{\tau = \tau_1 + \tau_2}  \frac{1}{\jb{\tau-n^3}^\frac{1}{2}}
\frac{\jb{n}^{s_1+1}}{\jb{n_1}^{s_1} \jb{n_2}^{s_2}} \frac{f(n_1, \tau_1)}{\jb{\tau_1 - \al n_1^3}^\frac{1}{2}}  
g(n_2, \tau_2) d \tau_1 \bigg\|_{L^2_{n, \tau}} \lesssim \|f\|_{L^2_{n, \tau}}\|g\|_{L^\infty_{n}L^1_{\tau}} 
\end{equation}

\noindent
 for each $n_1 = [c_1 n ] , [c_1 n ] + 1, [c_2 n ] ,$ or $ [c_2 n ] + 1$.
From  \eqref{Gresonance1}, we have 
\begin{equation} \label{timelocal2}
\frac{\jb{n}^{s_1+1}}{\jb{n_1}^{s_1} \jb{n_2}^{s_2}} \frac{1}{\MAX^\frac{1}{2}}  
\lesssim |n|^{-s_2 + \frac{1}{2}+\frac{1}{2}\nu_{c_1} + \frac{1}{2}\eps} \lesssim 1 
\end{equation}
for $ s_2 \geq \tfrac{1}{2}+\tfrac{1}{2}\nu_{c_1} + \tfrac{1}{2}\eps$. 
The rest follows from H\"older inequality in $n$ and Young's inequality in $\tau$. 

\smallskip

Now, consider the second part of the $Z^{s_1}$ norm. 
It suffices to show
\[\bigg\| \intt_{\tau = \tau_1 + \tau_2}  \frac{1}{\jb{\tau-n^3}}
\frac{\jb{n}^{s_1+1}}{\jb{n_1}^{s_1} \jb{n_2}^{s_2}} \frac{f(n_1, \tau_1)}{\jb{\tau_1 - \al n_1^3}^\frac{1}{2}}  
g(n_2, \tau_2) d \tau_1 \bigg\|_{L^2_n L^1_{\tau}} \lesssim \|f\|_{L^2_{n_, \tau}}\|g\|_{L^\infty_{n}L^1_{\tau}}  \] 

\noindent
 for $n_1 = [c_1 n ] , [c_1 n ] + 1, [c_2 n ] ,$ or $ [c_2 n ] + 1$.
By H\"older's inequality in $\tau$, we have
\[\text{LHS}  \leq c\,  \bigg\| \int \frac{1}{\jb{\tau-n^3}^{\frac{1}{2}-}}
\frac{\jb{n}^{s_1+1}}{\jb{n_1}^{s_1} \jb{n_2}^{s_2}} \frac{f(n_1, \tau_1)}{\jb{\tau_1 - \al n_1^3}^\frac{1}{2}}  
g(n_2, \tau_2) d \tau_1 \bigg\|_{L^2_{n, \tau}}, \]
where $c = \sup_{n} \big(\int \jb{\tau - n^3}^{-1-} d\tau \big)^{1/2} < \infty $.
The rest follows from the previous part
as long as $s_2 > \tfrac{1}{2}+\tfrac{1}{2}\nu_{c_1} + \tfrac{1}{2}\eps$.
\end{proof}

\begin{proposition} \label{Gibbsbilinear3}
Assume $\nu_{c_1} <1$.
Then, for  $\frac{1}{4} \leq s_1 < \frac{1}{2} < s_2 < 1$ with $2s_1 > s_2 > \frac{1}{2} + \frac{1}{2}\nu_{c_1}$, we have
\begin{equation}
\left\| \dx( v_1 v_2 ) \right\|_{Z^{s_2, \infty}} \lesssim \|v_1 \|_{Y_\al^{s_1, s_2}} \|v_2 \|_{Y_\al^{s_1, s_2}} .
\end{equation}
\end{proposition}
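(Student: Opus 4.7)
The plan is to follow the strategy of Proposition \ref{Gibbsbilinear1}, now replacing the $L^2_{n,\tau}$-norm on the left-hand side by the fiber-wise norm $L^\infty_n L^2_\tau$ appearing in $Z^{s_2,\infty}$. By a variant of Lemma \ref{LEM:oldbilinear} (valid in the $L^\infty_n$ setting via the trivial embedding $\|\cdot\|_{L^\infty_n}\leq \|\cdot\|_{L^2_n}$ on $\Z$), the contributions from $\{|n|\lesssim 1\}$ and from the non-resonant set $A$ of \eqref{nonres1} reduce to KdV-type bilinear considerations; here the hypothesis $2s_1>s_2$ lets me absorb the extra $\jb{n}^{s_2}$-weight on the LHS by means of $\jb{n}^{s_2}/(\jb{n_1}^{s_1}\jb{n_2}^{s_1})\lesssim 1$ in the bulk regime $|n|\sim|n_1|\sim|n_2|$, pairing the low-frequency factor against the $Y_\al^{s_2,\infty}$ piece of the norm to handle unbalanced frequencies.

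It therefore remains to treat the resonant region $B$ of \eqref{res1}. As in Proposition \ref{Gibbsbilinear1}, for each fixed $n$ only the four values $n_1\in\{[c_1 n],[c_1 n]+1,[c_2 n],[c_2 n]+1\}$ contribute, $|n|\sim|n_1|\sim|n_2|$ on $B$, and the Diophantine bound \eqref{Gresonance1} gives $\MAX\gtrsim |n|^{1-\nu_{c_1}-\eps}$. My hypothesis $s_2>\tfrac{1}{2}+\tfrac{1}{2}\nu_{c_1}$ yields the crucial inequality $\jb{n}^{1-s_2}\lesssim \MAX^{1/2}$. By the symmetry of the bilinear expression in $v_1,v_2$ (together with the interchange $n_1\leftrightarrow n_2$), I may assume $\jb{\tau_1-\al n_1^3}\geq \jb{\tau_2-\al n_2^3}$, so $\MAX$ is either $\jb{\tau-n^3}$ or $\jb{\tau_1-\al n_1^3}$.

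To bound the $X^{s_2,\infty,-\frac{1}{2}}$ part of the $Z^{s_2,\infty}$-norm on $B$, I will take $v_1$ through its $X_\al^{s_2,\infty,\frac{1}{2}}$-component and $v_2$ through its $L^\infty_n L^1_\tau$-component of $Y_\al^{s_2,\infty}\subset Y_\al^{s_1,s_2}$. Setting $f(n_1,\tau_1)=\jb{n_1}^{s_2}\jb{\tau_1-\al n_1^3}^{1/2}\ft{v}_1(n_1,\tau_1)$ and $g(n_2,\tau_2)=\jb{n_2}^{s_2}\ft{v}_2(n_2,\tau_2)$, so that $\|f\|_{L^\infty_n L^2_\tau},\|g\|_{L^\infty_n L^1_\tau}\lesssim \|v_{1,2}\|_{Y_\al^{s_1,s_2}}$, the estimate reduces, using $\jb{n_j}\sim\jb{n}$ on $B$, to
\begin{equation*}
\sup_n\,\Big\|\,\frac{\jb{n}^{1-s_2}}{\jb{\tau-n^3}^{1/2}}\sum_{n_1}\intt \frac{f(n_1,\tau_1)\,g(n_2,\tau-\tau_1)}{\jb{\tau_1-\al n_1^3}^{1/2}}\,d\tau_1\,\Big\|_{L^2_\tau}\lesssim \|f\|_{L^\infty_n L^2_\tau}\|g\|_{L^\infty_n L^1_\tau},
\end{equation*}
where the $n_1$-sum ranges over the four resonant values. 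When $\MAX=\jb{\tau-n^3}$, the prefactor $\jb{n}^{1-s_2}/\jb{\tau-n^3}^{1/2}\lesssim 1$ by the crucial inequality, and Young's inequality $\|h*g(n_2,\cdot)\|_{L^2_\tau}\leq \|h\|_{L^2_\tau}\|g(n_2,\cdot)\|_{L^1_\tau}$ applied to $h(\tau_1)=f(n_1,\tau_1)/\jb{\tau_1-\al n_1^3}^{1/2}$, combined with $\|h\|_{L^2_\tau}\leq \|f(n_1,\cdot)\|_{L^2_\tau}$, closes the estimate. When $\MAX=\jb{\tau_1-\al n_1^3}$, I will instead absorb $\jb{n}^{1-s_2}$ into $\jb{\tau_1-\al n_1^3}^{1/2}$, treat $\jb{\tau-n^3}^{-1/2}\leq 1$ as a pointwise $L^\infty_\tau$-multiplier, and apply Young's inequality to $f(n_1,\cdot)*g(n_2,\cdot)$ directly. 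The second part of $Z^{s_2,\infty}$, weighted by $\jb{\tau-n^3}^{-1}$, follows from the first part via the pointwise inequality $\jb{\tau-n^3}^{-1}\leq \jb{\tau-n^3}^{-1/2}$.

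The main obstacle is that the $L^\infty_n$ norm on the LHS precludes any summation or Cauchy--Schwarz in the spatial frequency $n$. This is exactly what forces me to use both the $X_\al^{s_2,\infty,\frac{1}{2}}$ and the $L^\infty_n L^1_\tau$ components of $Y_\al^{s_2,\infty}$ on the RHS, and it is what makes the resonant analysis tractable: on $B$ the $n_1$-sum collapses to just four terms, so no spatial summation is needed after taking $\sup_n$ of the Young-type estimate.
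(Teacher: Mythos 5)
Your treatment of the resonant region $B$ is essentially the paper's own argument (its Case (2)): the same four-point localization $n_1\in\{[c_1n],[c_1n]+1,[c_2n],[c_2n]+1\}$, the same Diophantine bound \eqref{Gresonance1} giving $\jb{n}^{1-s_2}\lesssim \MAX^{1/2}$, the same choice of components ($X_\al^{s_2,\infty,\frac12}$ for one factor, $L^\infty_nL^1_\tau$ for the other) and Young's inequality in $\tau$. The genuine gap is your disposal of the second component of $Z^{s_2,\infty}$ by the pointwise inequality $\jb{\tau-n^3}^{-1}\leq\jb{\tau-n^3}^{-1/2}$. That only handles an $L^\infty_nL^2_\tau$-based version of that component, which is then dominated by the $X^{s_2,\infty,-\frac12}$ part and hence vacuous; the bound that is actually needed, and that the paper proves in its Cases (3) and (4), is the $L^\infty_nL^1_\tau$-based estimate for $\jb{n}^{s_2}\jb{\tau-n^3}^{-1}\big(\dx(v_1v_2)\big)^{\wedge}$, in parallel with the $L^2_nL^1_\tau$ piece of $Z^{s_1}$; this is the piece that feeds the $L^\infty_nL^1_\tau$ part of $Y^{s_2,\infty}$ through the Duhamel estimate and thus controls $\sup_n\jb{n}^{s_2}|\ft{u}(n,t)|$. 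A pointwise bound cannot convert an $L^1_\tau$ norm into an $L^2_\tau$ norm; as in the second half of Proposition \ref{Gibbsbilinear1}, you must apply Cauchy--Schwarz in $\tau$, writing $\jb{\tau-n^3}^{-1}=\jb{\tau-n^3}^{-\frac12-}\jb{\tau-n^3}^{-\frac12+}$, and this $\eps$-loss is exactly where the strict inequalities $2s_1>s_2$ and $s_2>\frac12+\frac12\nu_{c_1}$ are consumed.

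Two smaller repairs on the non-resonant set $A$. Your reduction ``by a variant of Lemma \ref{LEM:oldbilinear} via $\|\cdot\|_{L^\infty_n}\leq\|\cdot\|_{L^2_n}$'' is not literal: applying that lemma at level $s_2$ would place $\|v_i\|_{Y_\al^{s_2}}$ (an $L^2_n$-based norm with weight $\jb{n}^{s_2}$) on the right, which is not controlled by $Y_\al^{s_1,s_2}$; the mixed-weight multiplier computation you then sketch is indeed what is required, and in the balanced regime it matches the paper. In the unbalanced regime ($|n_1|\ll|n|$ or $|n_2|\ll|n|$), pairing the low-frequency factor with the $Y_\al^{s_2,\infty}$ component sacrifices its $\ell^2_n$ summability, which your Young/H\"older scheme in $n$ needs; this can be salvaged since the leftover weight $\jb{n_{\mathrm{low}}}^{-s_2}$ is square-summable ($s_2>\frac12$), but you must say so --- or argue as the paper does: in that regime $|n^3-\al n_1^3-\al n_2^3|\gtrsim|n|^3$, so the multiplier is $\lesssim|n|^{s_2-s_1-\frac12}\lesssim1$ (using $s_2<2s_1<s_1+\frac12$), and both factors can remain in the $L^2_n$-based components, closing with Young's inequality in $n$ and $\tau$.
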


\begin{proof}
For $|n|\lesssim 1 $, we have $\jb{n}^{s_2} \sim \jb{n}^{s_1}$ and $L^\infty_n$-norm $\sim$ $L^2_n$-norm. 
i.e. it reduces to Proposition \ref{Gibbsbilinear1}.
Thus, assume $|n| \gtrsim 1$.
Without loss of generality, assume $\jb{\tau - n^3}, \jb{\tau_1 - \al n_1^3} \gtrsim \jb{\tau_2 - \al n_2^3}$.
First, consider the $X^{s_2, \infty, -\frac{1}{2}}$ part of the $Z^{s_2, \infty}$ norm.

\noindent
$\bullet$ {\bf Case (1):} Away from resonances, i.e. on $A$ in \eqref{nonres1}
\nopagebreak

From \cite{OH1}, we have 
\begin{equation} \label{Gnonresonance1}
\MAX: = \max ( \jb{\tau - n^3}, \jb{\tau_1 - \al n_1^3}, \jb{\tau_2 - \al n_2^3} )\gtrsim |n^3 - \al n_1^3 - \al n_2^3| \gtrsim n^{2}  .
\end{equation}

\noindent
This can be seen from the fact that $P_{n}(n_1) := n^3 - \al n_1^3 - \al n_2^3$ is a quadratic polynomial in $n_1$ for fixed $n$
and that  $\partial_{n_1} P_{n}(n_1)$ at $n_1 = c_1 n, c_2n$ (i.e. at the zeros of $P_{n}(n_1)$) is of order $n^2$.
It suffices to  show 
\[\bigg\| \sum_{n = n_1 + n_2}\intt_{\tau = \tau_1 + \tau_2} 
\frac{\jb{n}^{s_2+1}}{\jb{n_1}^{s_1} \jb{n_2}^{s_1}} \frac{f(n_1, \tau_1)g(n_2, \tau_2) d \tau_1}
{\jb{\tau-n^3}^\frac{1}{2}\jb{\tau_1-\al n_1^3}^\frac{1}{2}}
 \bigg\|_{L^\infty_n L^2_\tau} \lesssim \|f\|_{L^2_{n_1, \tau_1}}\|g\|_{L^2_{n_2}L^1_{\tau_2}} . \] 

If $ |n_1|, |n_2| \gtrsim |n|$, then we have, from \eqref{Gnonresonance1}, 
$\frac{\jb{n}^{s_2+1}}{\jb{n_1}^{s_1} \jb{n_2}^{s_1}} \frac{1}{\MAX^\frac{1}{2}}  \lesssim |n|^{s_2 - 2 s_1} \lesssim 1$
 for  $2 s_1 \geq s_2. $
Otherwise, we have $|n_1| \ll |n|$ or $|n_2| \ll |n|$.
In this case, we have
$\MAX \gtrsim |n^3 - \al n_1^3 - \al n_2^3| \gtrsim |n|^{3}  $
and this gives us
$\frac{\jb{n}^{s_2+1}}{\jb{n_1}^{s_1} \jb{n_2}^{s_1}} \frac{1}{\MAX^\frac{1}{2}}  
\lesssim |n|^{s_2 - s_1 -\frac{1}{2}} \lesssim 1$
for  $ s_1+ \frac{1}{2} \geq s_2. $
Then, the rest follows from Young's inequality in $n$ and $\tau$.

\noindent
$\bullet$ {\bf Case (2):} Near resonances, i.e. on $B$ in \eqref{res1}
\nopagebreak

It suffices  to show, for $n_1 = [c_1 n ] , [c_1 n ] + 1, [c_2 n ] ,$ or $ [c_2 n ] + 1$, 
\[\bigg\| \intt_{\tau = \tau_1 + \tau_2} 
\frac{\jb{n}^{s_2+1}}{\jb{n_1}^{s_2} \jb{n_2}^{s_2}} \frac{f(n_1, \tau_1)g(n_2, \tau_2) d \tau_1}{\jb{\tau-n^3}^\frac{1}{2}\jb{\tau_1-\al n_1^3}^\frac{1}{2}}
 \bigg\|_{L^\infty_n L^2_\tau} \lesssim \|f\|_{L^\infty_{n_1} L^2_{\tau_1}}\|g\|_{L^\infty_{n_2}L^1_{\tau_2}} . \] 
\noindent
From  \eqref{Gresonance1}, we have
$\frac{\jb{n}^{s_2+1}}{\jb{n_1}^{s_2} \jb{n_2}^{s_2}} \frac{1}{\MAX^\frac{1}{2}}  
\lesssim |n|^{-s_2 + \frac{1}{2}+\frac{1}{2}\nu_{c_1} + \frac{1}{2}\eps} \lesssim 1 $
for $ s_2 \geq \tfrac{1}{2}+\tfrac{1}{2}\nu_{c_1} + \tfrac{1}{2}\eps$. 
Then, the rest follows 
from H\"older inequality in $n$ and Young's inequality in $\tau$.

Now, consider the $L^\infty_n L^1_\tau$ part of $Z^{s_2, \infty}$ norm.

\noindent
$\bullet$ {\bf Case (3):} Away from resonance.
\nopagebreak

It suffices to show
\[\bigg\| \sum_{n = n_1 + n_2}\intt_{\tau = \tau_1 + \tau_2} 
\frac{\jb{n}^{s_2+1}}{\jb{n_1}^{s_1} \jb{n_2}^{s_1}} 
\frac{f(n_1, \tau_1)g(n_2, \tau_2) d \tau_1}{\jb{\tau-n^3}\jb{\tau_1-\al n_1^3}^\frac{1}{2}}
\bigg\|_{L^\infty_n L^1_\tau} \lesssim \|f\|_{L^2_{n_1, \tau_1}}\|g\|_{L^2_{n_2}L^1_{\tau_2}} . \] 

\noindent
As in the proof of Proposition \ref{Gibbsbilinear1}, 
apply H\"older's inequality in $\tau$, 
and the rest follows from Case (1) for $2 s_1 > s_2$.

\noindent
$\bullet$ {\bf Case (4):} Near resonances.
\nopagebreak

In this case,  it suffices  to show, for $n_1 = [c_1 n ] , [c_1 n ] + 1, [c_2 n ] ,$ or $ [c_2 n ] + 1$, 
\[\bigg\| \intt_{\tau = \tau_1 + \tau_2} 
\frac{\jb{n}^{s_2+1}}{\jb{n_1}^{s_2} \jb{n_2}^{s_2}} 
\frac{f(n_1, \tau_1)g(n_2, \tau_2) d \tau_1}{\jb{\tau-n^3}\jb{\tau_1-\al n_1^3}^\frac{1}{2}}
\bigg\|_{L^\infty_n L^1_\tau} \lesssim \|f\|_{L^\infty_{n_1} L^2_{\tau_1}}\|g\|_{L^\infty_{n_2}L^1_{\tau_2}} . \] 
As in Case(3), apply H\"older's inequality in $\tau$, and the rest follows from Case (2)  
as long as for $ s_2 > \tfrac{1}{2}+\tfrac{1}{2}\nu_{c_1} + \tfrac{1}{2}\eps$. 
\end{proof}

\begin{proposition}  \label{Gibbsbilinear2}
Assume $ \max( \nu_{d_1}, \nu_{d_2} )<1$ and the mean 0 condition for $u$.
Then, for  $\frac{1}{4} \leq  s_1 < \frac{1}{2} < s_2 < 1$ 
with $s_2 > \frac{1}{2} + \frac{1}{2}\max( \nu_{d_1}, \nu_{d_2} )$, we have
\begin{equation}
\left\| \dx( u v) \right\|_{Z_\al^{s_1, s_2}} \lesssim \| u \|_{Y^{s_1, s_2}} \| v \|_{Y_\al^{s_1, s_2}}.
\end{equation}
\end{proposition}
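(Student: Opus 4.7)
The plan is to adapt the proofs of Propositions \ref{Gibbsbilinear1} and \ref{Gibbsbilinear3} by replacing the resonance equation $n^3 - \al n_1^3 - \al n_2^3 = 0$ with \eqref{resonance2}, namely $\al n^3 - n_1^3 - \al n_2^3 = 0$, and by exploiting the mean $0$ condition on $u$ to discard the branch $(n_1, n_2) = (0, n)$ of solutions. By Lemma \ref{LEM:oldbilinear} (specifically \eqref{Xbilinear6}), the estimate holds when $|n| \lesssim 1$ and on the off-resonance region $C$ from \eqref{nonres2}. It therefore suffices to treat the near-resonance set
\[
D = \{(n, n_1, n_2) \in \Z^3 : n = n_1 + n_2,\ |n| \gtrsim 1,\ |n_1 - d_1 n| < 1 \text{ and } |n_1 - d_2 n| < 1 \},
\]
from which the mean $0$ assumption removes $n_1 = 0$. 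On $D$, for each fixed $n$ there are at most four admissible values $n_1 \in \{[d_1 n], [d_1 n]+1, [d_2 n], [d_2 n]+1\}$, and $|n| \sim |n_1| \sim |n_2|$.

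The key input is the analogue of \eqref{Gresonance1}: from \eqref{lowerbdD},
\[
\MAX := \max\bigl(\jb{\tau - \al n^3},\, \jb{\tau_1 - n_1^3},\, \jb{\tau_2 - \al n_2^3}\bigr) \gtrsim |\al n^3 - n_1^3 - \al n_2^3| \gtrsim |n|^{1 - \max(\nu_{d_1}, \nu_{d_2}) - \eps},
\]
where the first $\gtrsim$ follows from $(\tau - \al n^3) - (\tau_1 - n_1^3) - (\tau_2 - \al n_2^3) = -(\al n^3 - n_1^3 - \al n_2^3)$ under $\tau = \tau_1 + \tau_2$. I would then split the $Z_\al^{s_1,s_2}$ norm into $Z_\al^{s_1}$ and $Z_\al^{s_2,\infty}$ and run through the same case analysis as in Propositions \ref{Gibbsbilinear1} and \ref{Gibbsbilinear3}, with the crucial bookkeeping that whichever input frequency lies on the resonance set absorbs the $\jb{\cdot}^{s_2}$ weight while the other absorbs the $\jb{\cdot}^{s_1}$ weight. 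Without loss of generality assume the modulation of the output or of the $u$-input dominates, so that $\MAX \in \{\jb{\tau - \al n^3}, \jb{\tau_1 - n_1^3}\}$. For the $Z_\al^{s_1}$ piece, the controlling symbol bound is
\[
\frac{\jb{n}^{s_1+1}}{\jb{n_1}^{s_2}\,\jb{n_2}^{s_1}}\,\MAX^{-1/2} \lesssim |n|^{-s_2 + \frac{1}{2} + \frac{1}{2}\max(\nu_{d_1}, \nu_{d_2}) + \frac{1}{2}\eps} \lesssim 1,
\]
which holds by the hypothesis $s_2 > \frac{1}{2} + \frac{1}{2}\max(\nu_{d_1}, \nu_{d_2})$; the remaining $\tau$-integration and $n$-summation are then handled by H\"older and Young as in Proposition \ref{Gibbsbilinear1}, and the second $L^2_n L^1_\tau$ piece of $Z_\al^{s_1}$ follows by the same H\"older-in-$\tau$ trick using $\sup_n \bigl(\int \jb{\tau - \al n^3}^{-1-}\, d\tau\bigr)^{1/2} < \infty$.

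For the $Z_\al^{s_2, \infty}$ piece I would reproduce the four cases of Proposition \ref{Gibbsbilinear3}. On the off-resonance region $C$, the KdV-style quadratic lower bound $\MAX \gtrsim |n|^2$ (from the fact that $P_n(n_1) := \al n^3 - n_1^3 - \al n_2^3$ is quadratic in $n_1$ with $\partial_{n_1} P_n$ of order $n^2$ at the roots $d_1 n, d_2 n$, together with the comparison $|n_1 - d_j n| \geq 1$) gives $\jb{n}^{s_2+1}\jb{n_1}^{-s_1}\jb{n_2}^{-s_1}\MAX^{-1/2} \lesssim |n|^{s_2 - 2s_1} \lesssim 1$ when $|n_1|, |n_2| \gtrsim |n|$, using $2s_1 \geq s_2$, and $\lesssim |n|^{s_2 - s_1 - 1/2} \lesssim 1$ otherwise (when one of $n_1, n_2$ is much smaller than $n$, we even have $\MAX \gtrsim |n|^3$), using $s_1 + \frac{1}{2} \geq s_2$. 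On the near-resonance set $D$, the same symbol bound as for $Z_\al^{s_1}$ but with $\jb{n_1}^{s_1}\jb{n_2}^{s_1}$ replaced by $\jb{n_1}^{s_2}\jb{n_2}^{s_2}$ (there is no loss since $|n| \sim |n_1| \sim |n_2|$ on $D$) closes the estimate. The $L^\infty_n L^1_\tau$ parts reduce to the $X_\al^{s_2,\infty,-1/2}$ parts via H\"older in $\tau$, as in Cases (3)--(4) of Proposition \ref{Gibbsbilinear3}.

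The main subtlety to watch is the asymmetry between the $u$ and $v$ inputs: the weights $\jb{\tau_1 - n_1^3}$ and $\jb{\tau_2 - \al n_2^3}$ are not interchangeable, and the mean $0$ hypothesis is essential to exclude the third resonance branch $(n_1, n_2) = (0, n)$ along which $\al n^3 - n_1^3 - \al n_2^3 \equiv 0$, a point where no power of $\MAX$ can recover the derivative loss. Once that branch is removed, the remaining analysis is a direct transcription of Propositions \ref{Gibbsbilinear1}--\ref{Gibbsbilinear3} with $c_j$ replaced by $d_j$, and the hypotheses $\frac{1}{4} \leq s_1 < \frac{1}{2} < s_2 < 1$ with $2s_1 > s_2 > \frac{1}{2} + \frac{1}{2}\max(\nu_{d_1}, \nu_{d_2})$ suffice. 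The positive power of $T$ required for \eqref{Xbilinear4} is then obtained by the standard time-localization argument of \cite{BO1} applied to the off-resonance contributions, while the near-resonance contributions gain $T^\theta$ directly from the $\jb{\tau - \al n^3}^{-1/2}$ factor and the compact support of $\eta_{_{2T}}$ in time.
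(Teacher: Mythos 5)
Your outline follows the same route as the paper's (the paper itself only sketches this proof, deferring to Propositions \ref{Gibbsbilinear1} and \ref{Gibbsbilinear3}): reduce to the near-resonance set $D$ via Lemma \ref{LEM:oldbilinear}, use the mean-zero condition to discard the branch $n_1=0$, and use the Diophantine lower bound $\MAX \gtrsim |n|^{1-\max(\nu_{d_1},\nu_{d_2})-\eps}$ on $D$ with the weight bookkeeping you describe. However, one step as written would fail. The resonance function here is not quadratic in $n_1$: for $n=n_1+n_2$ one has $\al n^3 - n_1^3 - \al n_2^3 = (\al-1)\,n_1(n_1-d_1n)(n_1-d_2n)$, a cubic with a root at $n_1=0$ (which is exactly where the mean-zero condition is used). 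Consequently, in the off-resonance region $C$ of \eqref{nonres2} with unbalanced frequencies, your claim $\MAX \gtrsim |n|^3$ is false when $1 \leq |n_1| \ll |n|$: there $|n_1-d_1n|\sim|n_1-d_2n|\sim|n|$, so one only gets $\MAX \gtrsim |n_1|\,|n|^2$, which is precisely the bound the paper records for this case. With this weaker gain, the symbol in the $Z_\al^{s_2,\infty}$ estimate becomes $\jb{n}^{s_2+1}\jb{n_1}^{-s_1}\jb{n_2}^{-s_1}\MAX^{-1/2} \sim |n|^{s_2-s_1}\jb{n_1}^{-s_1-\frac12}$, which is not bounded uniformly in $n$, so this case does not close via $s_1+\tfrac12 \geq s_2$ as you assert. (Your uniform claim $\MAX\gtrsim|n|^2$ on $C$ is in fact true once $n_1\neq 0$, but it comes from the cubic factorization above, not from a quadratic polynomial, and $|n|^2$ alone is also insufficient in this regime.)

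The repair is to rebalance the weights there: since $\jb{n_2}\sim\jb{n}$ when $|n_1|\ll|n|$, measure $v$ through the $Y_\al^{s_2,\infty}$ component of $\|v\|_{Y_\al^{s_1,s_2}}$ (weight $\jb{n_2}^{s_2}$, $L^\infty_{n_2}$-based) and $u$ through its $s_1$-components; the symbol is then $\lesssim \jb{n_1}^{-s_1-\frac12}$, bounded and square-summable in $n_1$, and Cauchy--Schwarz in $n_1$ together with Young/H\"older in $\tau$ closes the case. The opposite unbalanced case $|n_2|\ll|n|$ is fine as you wrote it, since then $|n_1|\sim|n|$ and indeed $\MAX\gtrsim|n|^3$. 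A smaller point: because the two inputs are not interchangeable, your ``without loss of generality $\MAX\in\{\jb{\tau-\al n^3},\jb{\tau_1-n_1^3}\}$'' is not literally a reduction; the case $\MAX=\jb{\tau_2-\al n_2^3}$ must be run separately, placing $u$ rather than $v$ in the $L^1_\tau$-based parts of its norm, though this is routine. Apart from these points your proposal matches the paper's intended argument.
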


\begin{proof}
We omit the details
of the proof of Proposition \ref{Gibbsbilinear2}
since it is basically the same as those of Propositions \ref{Gibbsbilinear1} and \ref{Gibbsbilinear3}
once we point out the following. 
Let $\MAX := \max ( \jb{\tau - \al n^3}, \jb{\tau_1 - n_1^3}, \jb{\tau_2 - \al n_2^3} )$.
Then, we have 
$\MAX \gtrsim | \al n^3 -  n_1^3 - \al n_2^3| \gtrsim | n_1 n|$
on $C$ in \eqref{nonres2}, i.e. away from resonances. 
Moreover, if $|n_1| \ll |n|$ or $|n_2| \ll |n|$, then we have 
$\MAX \gtrsim | \al n^3 -  n_1^3 - \al n_2^3| \gtrsim |n_1n^2| .$
Now, define the resonance set $D$ by
\begin{equation} \label{res2}
D  = \{ (n, n_1, n_2) : |n|\gtrsim 1,  | n_1 - d_1 n | < 1 \text{ and } |n_1 - d_2 n| < 1 \}.
\end{equation} 

\noindent
i.e. the left hand side of \eqref{resonance2} can be small on $D$. 
As before,  for fixed $n\in \mathbb{Z}$, there are only 4 values of $n_1$ in $D$, 
i.e. $n_1 = [d_1 n ] , [d_1 n ] + 1, [d_2 n ] ,$ or $ [d_2 n ] + 1$.
Thus, there are 4 terms contributing in the convolution in the spatial Fourier variable.
Note that we have $|n| \sim |n_1| \sim |n_2|$ on $D$.
By the definition of the minimal type indices $\nu_{d_1}, \nu_{d_2}$ (see \eqref{lowerbdD}), we have 
\begin{align*} 
\MAX: = 
 \gtrsim & | \al n^3 -  n_1^3 - \al n_2^3| 
 \gtrsim|n_1|  |n|^{0-} \gtrsim |n|^{1-\max(\nu_{d_1}, \nu_{d_2}) - \eps} .
\end{align*}

\noindent
 for any $\eps > 0$.
 The rest follows as in the proof of 
 Propositions \ref{Gibbsbilinear1} and \ref{Gibbsbilinear3}
\end{proof}

This establishes the LWP for the periodic Majda-Biello system \eqref{MB}
for  small initial data $(u_0, v_0) \in H^{s_1, s_2} \times H^{s_1, s_2}$. 
For the general data without the smallness assumption, 
one can exploit small time intervals $[-2T, 2T]$, $T \ll 1$ to gain an extra factor
$T^\theta$ for some $\theta > 0$ as in \eqref{Xbilinear3} and \eqref{Xbilinear4}.  
We discuss how to gain $T^\theta$ in \eqref{timelocal1} assuming the functions are localized in time,
i.e. by replacing $f$ (or  $g$) by $\ft{\eta_{_{2T}}}*f$ (or  $\ft{\eta_{_{2T}}}*g$) in \eqref{timelocal1}.

Suppose $\MAX = \jb{\tau - n^3}$.
Then, it suffices to prove
\begin{equation} \label{timelocal3}
\text{LHS of } \eqref{timelocal1} \text{ with $f$ replaced by } \ft{\eta_{_{2T}}}*f   \lesssim T^\theta \|f\|_{L^2_{n} L^1_\tau} \|g\|_{L^\infty_{n}L^1_{\tau}}.
\end{equation}

\noindent
By \eqref{timelocal1}, H\"older in $n$, and Young's inequality in $\tau$, we have 
$\text{LHS of } \eqref{timelocal3} \lesssim \|\ft{\eta_{_{2T}}}*f\|_{L^2_{n, \tau}} \|g\|_{L^\infty_{n}L^1_{\tau}}$,
and the first factor is bounded by 
$ \| \ft{\eta_{_{2T}}}\|_{L^2_\tau}  \|f\|_{L^2_{n} L^1_\tau}
\sim T^\frac{1}{2} \|f\|_{L^2_{n} L^1_\tau}$
by Young's inequality. 
Next, suppose $\MAX = \jb{\tau_1 - \al n_1^3}$.
Then, it suffices to prove
\begin{equation} \label{timelocal4}
\text{LHS of } \eqref{timelocal1} 
\text{ with $g$ replaced by } \ft{\eta_{_{2T}}}*g   
\lesssim T^\theta \|f\|_{L^2_{n, \tau} } \|g\|_{L^\infty_{n}L^1_{\tau}}.
\end{equation}

\noindent
By \eqref{timelocal1}, H\"older in $n, \tau$, and Young's inequality in $\tau$, we have 
\begin{align*}
\text{LHS}& \text{ of }  \eqref{timelocal4} \lesssim 
\| \jb{\tau - n^3}^{-\frac{1}{2}} f * (\ft{\eta_{_{2T}}}*g) \|_{L^2_{n, \tau}}
\leq \| \jb{\tau - n^3}^{-\frac{1}{2}}\|_{L^\infty_n L^3_\tau} \|f * (\ft{\eta_{_{2T}}}*g) \|_{L^2_n L^6_\tau} \\
& \lesssim \|f \|_{L^2_{n, \tau}} \|\ft{\eta_{_{2T}}}*g \|_{L^\infty_n L^{3/2}_\tau}
\leq \|\ft{\eta_{_{2T}}}\|_{L^{3/2}_\tau} \|f \|_{L^2_{n, \tau}}  \|g \|_{L^\infty_n L^1_\tau} 
\sim T^\frac{1}{3} \|f \|_{L^2_{n, \tau}}  \|g \|_{L^\infty_n L^1_\tau} .
\end{align*}

\noindent
All the other estimates in Propositions \ref{Gibbsbilinear1}, \ref{Gibbsbilinear3}, and \ref{Gibbsbilinear2}
can be modified in a similar manner to gain $T^\theta$
and we omit the detail.
(Note that
we have $L^2_\tau$ on the left hand side, 
possibly after H\"older inequality in $\tau$,
and $L^2_\tau$ and $L^1_\tau$ on the right hand side
for all the estimates.)
This yields the local well-posedness on the general data
$(u_0, v_0) \in H^{s_1, s_2} \times H^{s_1, s_2}$ without smallness assumption.
Note that the time interval $[-T, T]$ of existence depends 
on the size of $\| (u_0, v_0) \|_{H^{s_1, s_2} \times H^{s_1, s_2}}$ in an inverted polynomial way.
See \cite{BO1}.

\section{Global Well-Posedness on the Statistical Ensemble  and the \\ Invariance of the Gibbs Measure} 
\label{SEC:DIFFGIBBS}

Once we establish the local well-posedness of \eqref{MB} via the fixed point theorem
and tightness  of $\mu_N$ and $\mu$ (Lemma \ref{LEM:tight}),
Bourgain's argument in  \cite{BO4} yields the global well-posedness a.s. on the statistical ensemble.
As for the invariance of the Gibbs measure $\mu$ (Theorem \ref{THM:GIBBSinvariance}), 
we follow the argument in Rey-Bellet and Thomas \cite{RT}.
We include these arguments for the sake of completeness.

Consider the finite dimensional approximation to \eqref{MB}:
\begin{equation} \label{approxMB}
\left\{
\begin{array}{l}
u^N_t + u^N_{xxx} + \mathbb{P}_N(v^Nv^N_x) = 0\\
v^N_t + \al v^N_{xxx} + \mathbb{P}_N\big((u^Nv^N)_x\big) = 0 ,
\end{array}
\right.
\end{equation}
with $\big(u^N (x, 0), v^N(x, 0)\big) = \big(u_0^N(x), v_0^N(x) \big) = \big(\mathbb{P}_N u_0(x),  \mathbb{P}_N v_0(x)\big)$
for $N \to \infty$.
In the following, we assume that the mean of $u_0$ is 0.
Note that $\int (u^N)^2 + (v^N)^2 dx$ is conserved under the finite dimensional flow.
Moreover, the finite dimensional truncation of the Hamiltonian
\begin{equation} \label{GFINITEHAMIL1}
H_N(u, v) = \frac{1}{2} \int (u^N_x)^2 + \al (v^N_x)^2 - \mathbb{P}_N\big(u^N (v^N)^2 \big)dx
\end{equation}
is conserved as well.  
Therefore, by Liouville's Theorem, $\mu_N$ is invariant under the flow of \eqref{approxMB}.

In the following, we first establish the a.s. GWP of \eqref{approxMB} 
(with an explicit growth bound modulo small set), independent of $N$.
Then, using this and the invariance of $\mu_N$, we show the a.s. GWP of \eqref{MB} and 
the invariance of $\mu$.

\begin{lemma} \label{finitedimgrowth}
Let $\frac{1}{4} < s_1 < \frac{1}{2} < s_2 < 1$ with $ 2s_1 > s_2 > \frac{1}{2} + \frac{1}{2} \max( \nu_{c_1},\nu_{d_1},\nu_{d_2})$, 
$T < \infty$, and $\eps > 0$.
There exists a set $\Omega_{N, \eps} \subset  H^{s_1, s_2} \times H^{s_1, s_2}$ such that
$\mu_N ( \Omega_{N, \eps}^c) < \eps$ and for $(u^N_0, v^N_0) \in \Omega_{N, \eps}$,
the solution $(u^N, v^N)$ to the IVP \eqref{approxMB}
satisfies, for $|t| \leq T$, 
\[ \big\| \big(u^N, v^N \big) (t) \big\|_{H^{s_1, s_2} \times H^{s_1, s_2}} \lesssim \bigg( \log \frac{T}{\eps} \bigg)^\frac{1}{2}. \]

\end{lemma}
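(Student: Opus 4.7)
The plan is to iterate the local well-posedness result of Theorem \ref{THM:LWP1} on a grid of short time intervals covering $[-T,T]$, using the invariance of $\mu_N$ under the flow of \eqref{approxMB} to control, with high $\mu_N$-probability, the $H^{s_1,s_2}$-size of the solution at each grid point. Write $\Phi_N(t)$ for the flow map of \eqref{approxMB}; by Liouville's theorem combined with the conservation of $\int (u^N)^2+(v^N)^2\,dx$ and of $H_N$ in \eqref{GFINITEHAMIL1}, $\Phi_N(t)$ preserves $\mu_N$. The first thing to check is that Theorem \ref{THM:LWP1}, and in particular the inverted-polynomial lifespan bound recorded at the end of Section 5, applies to the truncated system \eqref{approxMB} with constants independent of $N$: the bilinear estimates of Propositions \ref{Gibbsbilinear1}--\ref{Gibbsbilinear2} are undisturbed by the sharp Fourier cutoff $\mathbb{P}_N$, so there exist $C,\gamma>0$, depending only on $s_1,s_2,\al$, with the property that any initial data with $\|(u_0^N,v_0^N)\|_{H^{s_1,s_2}}\le R$ produce a solution on $[-\delta,\delta]$ with $\delta\sim R^{-\gamma}$ satisfying $\|(u^N,v^N)(t)\|_{H^{s_1,s_2}}\le CR$ throughout.

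With this in hand, I would fix a parameter $K>0$ to be chosen at the end, set $\delta:=c_0 K^{-\gamma}$ and $M:=\lceil T/\delta\rceil\sim TK^\gamma$, and introduce the good set
\[
\Omega_{N,\eps}:=\bigcap_{|j|\le M}\Phi_N(j\delta)^{-1}\bigl(\Omega_{N,B}(s_1,s_2,K)\bigr).
\]
For $(u_0^N,v_0^N)\in\Omega_{N,\eps}$ we have $\|\Phi_N(j\delta)(u_0^N,v_0^N)\|_{H^{s_1,s_2}}\le K$ at every grid point, so chaining the local estimate on each interval $[j\delta,(j+1)\delta]$ produces a solution with $\|(u^N,v^N)(t)\|_{H^{s_1,s_2}}\le CK$ for every $|t|\le T$.

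It then remains to estimate $\mu_N(\Omega_{N,\eps}^c)$. Invariance of $\mu_N$ under $\Phi_N$ together with the tightness estimate \eqref{tight1} yields
\[
\mu_N\bigl(\Phi_N(j\delta)^{-1}(\Omega_{N,B}\setminus\Omega_{N,B}(s_1,s_2,K))\bigr)=\mu_N\bigl(\Omega_{N,B}\setminus\Omega_{N,B}(s_1,s_2,K)\bigr)\le e^{-cK^2},
\]
so a union bound over $|j|\le M$ gives $\mu_N(\Omega_{N,\eps}^c)\lesssim TK^\gamma\,e^{-cK^2}$. Choosing $K^2=C_1\log(T/\eps)$ with $C_1$ sufficiently large (depending only on $c,\gamma$) makes this bound strictly smaller than $\eps$ and simultaneously forces $CK\lesssim (\log(T/\eps))^{1/2}$, which is the required growth bound.

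The step I expect to be the main obstacle, albeit a technical rather than conceptual one, is verifying that the constants $C$ and $\gamma$ in the local theory are genuinely independent of $N$; this requires inspecting the proofs of Propositions \ref{Gibbsbilinear1}--\ref{Gibbsbilinear2} with $\mathbb{P}_N$ inserted on the nonlinearity, and noting that $\mathbb{P}_N$ is bounded on $Y^{s_1,s_2}$, $Y_\al^{s_1,s_2}$, $Z^{s_1,s_2}$, $Z_\al^{s_1,s_2}$ uniformly in $N$. Once this is in place, the rest is the standard Bourgain iteration: invariance of the truncated Gibbs measure, together with a tightness estimate and a local flow of polynomially determined lifespan, automatically promote local well-posedness to a logarithmic-in-$T/\eps$ global bound on a set of measure $1-\eps$.
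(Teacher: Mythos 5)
Your proposal is correct and follows essentially the same route as the paper: invariance of $\mu_N$ (Liouville), $N$-uniform local well-posedness inherited from Theorem \ref{THM:LWP1} via the boundedness of $\mathbb{P}_N$ on the relevant spaces, the intersected good set $\bigcap_{|j|\le T/\delta} S_N(j\delta)^{-1}\Omega_{N,B}(s_1,s_2,K)$ with $\delta\sim K^{-\theta}$, a union bound giving measure $\lesssim TK^{\theta}e^{-cK^2}$, and the choice $K\sim(\log(T/\eps))^{1/2}$. The only differences are cosmetic (pullbacks of the flow versus iterates of $S=S_N(\delta)$, and an explicit flagging of the $N$-independence of the local constants that the paper dispatches in one line).
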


\begin{proof} 
Let $S_N(t)$ be the flow map corresponding to \eqref{approxMB}.
By Liouville's theorem, $\mu_N$ is invariant under $S_N(t)$ for all $t$ (as long as the solution exists.)
Note that $\mathbb{P}_N$ acts continuously on the function spaces used for the local theory of \eqref{MB} in $H^{s_1, s_2} \times H^{s_1, s_2}$.
Then, from the local well-posedness of \eqref{MB} in $H^{s_1, s_2} \times H^{s_1, s_2}$, 
we obtain the local well-posedness of \eqref{approxMB} in $H^{s_1, s_2} \times H^{s_1, s_2}$
with the same bound; i.e. if $(u^N_0, v^N_0) \in \Omega_{N, B} (s_1, s_2, K)$, then
\[ \big\| \big(u^N, v^N \big) (t) \big\|_{H^{s_1, s_2} \times H^{s_1, s_2}} \leq 2K\]
for $|t| \leq \dl \sim K^{-\theta}$ with some $\theta > 0$.  
Note that this is independent of $N$.

Let $S = S_N(\dl)$ and consider the set
$\Omega_{N, \eps} = \bigcap_{j = - [ \frac{T}{\dl}]}^{  [\frac{T}{\dl}]} S^j \Omega_{N, B} (s_1, s_2, K)$.
From the invariance of $\mu_N$, we have
\[\mu_N(\Omega_{N, \eps}^c) \leq \frac{T}{\dl} \, \mu_N\big((\Omega_{N, B} (s_1, s_2, K))^c\big) \sim T K^\theta e^{-cK^2}.\]
Hence,  we have $\mu_N(\Omega_{N, \eps}^c) < \eps$ for $K \sim \big(\log \frac{T}{\eps} \big)^\frac{1}{2}$.
If $(u^N_0, v^N_0) \in \Omega_{N, \eps}$, then 
by construction we have $\big\| \big(u^N, v^N \big) (j\dl) \big\|_{H^{s_1, s_2}} \leq K$
for $j = 0, 1, \cdots, [\frac{T}{\dl}].$
Thus, we have the well-posedness on each subinterval
$[j \dl, (j+1) \dl]$ of $[0, T]$ for $j = 0, 1, \cdots, [\frac{T}{\dl}] - 1$  (with bounds independent of $N$)
and 
\[ \big\| \big(u^N, v^N \big) (t) \big\|_{H^{s_1, s_2} } \leq 2K \sim \bigg(\log \frac{T}{\eps}\bigg)^\frac{1}{2} \text{ for } 0 \leq t \leq T.\]

\noindent
Since the flow is time-reversible, we have 
$ \big\| \big(u^N, v^N \big) (t) \big\|_{H^{s_1, s_2} } \lesssim \left(\log \frac{T}{\eps}\right)^\frac{1}{2}$
for  $ |t| \leq T$.
\end{proof}

\begin{corollary} \label{finitedimGWP}
Let $\frac{1}{4} < s_1 < \frac{1}{2} < s_2 < 1$ with $ 2s_1 > s_2 > \frac{1}{2} + \frac{1}{2} \max( \nu_{c_1},\nu_{d_1},\nu_{d_2})$
 and $\eps > 0$.
There exists a set $\Omega'_{N, \eps} \subset H^{s_1, s_2} \times H^{s_1, s_2}$ such that
$\mu_N \big( (\Omega_{N, \eps}')^c\big) < \eps$ and for $(u^N_0, v^N_0) \in \Omega'_{N, \eps}$, 
the solution $(u^N, v^N)$ to the IVP \eqref{approxMB}
satisfies, for all $t \in \mathbb{R}$, 
\begin{equation} \label{Ggrowth1}
 \big\| \big(u^N, v^N \big) (t) \big\|_{H^{s_1, s_2} \times H^{s_1, s_2}} \lesssim \bigg( \log \frac{1 + |t|}{\eps} \bigg)^\frac{1}{2}. 
\end{equation}
\end{corollary}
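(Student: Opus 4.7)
The plan is to run a dyadic-in-time exhaustion built directly on Lemma \ref{finitedimgrowth}. That lemma already produces, for each finite horizon $T$ and tolerance $\eps$, a good set of measure at least $1-\eps$ on which the solution is controlled by $(\log(T/\eps))^{1/2}$ for $|t|\le T$. The task of the corollary is merely to interchange the quantifiers ``for each $T$'' and ``on a good set of initial data'', at the price of letting the bound grow logarithmically in $|t|$.

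Concretely, I would set $T_j = 2^j$ and $\eps_j = \eps\, 2^{-(j+1)}$ for $j = 0, 1, 2, \ldots$, and apply Lemma \ref{finitedimgrowth} to each pair $(T_j, \eps_j)$ to obtain a set $\Omega_{N,j} \subset H^{s_1,s_2}\times H^{s_1,s_2}$ with $\mu_N(\Omega_{N,j}^c) < \eps_j$ such that every $(u_0^N, v_0^N) \in \Omega_{N,j}$ yields a solution satisfying
\[
\big\|(u^N, v^N)(t)\big\|_{H^{s_1,s_2}\times H^{s_1,s_2}} \lesssim \big(\log(T_j/\eps_j)\big)^{1/2} \qquad \text{for } |t| \le T_j.
\]
Define the global good set $\Omega'_{N,\eps} := \bigcap_{j\geq 0} \Omega_{N,j}$. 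Countable subadditivity then gives $\mu_N\big((\Omega'_{N,\eps})^c\big) \leq \sum_{j\geq 0} \eps_j = \eps$.

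Given any $(u_0^N, v_0^N) \in \Omega'_{N,\eps}$ and any $t \in \R$, pick the minimal $j = j(t) \geq 0$ with $|t|\leq T_j = 2^j$, so that $2^j \leq 2(1+|t|)$. Since the initial datum lies in $\Omega_{N,j}$ and $|t|\leq T_j$, one has
\[
\big\|(u^N, v^N)(t)\big\|_{H^{s_1,s_2}\times H^{s_1,s_2}} \lesssim \Big(\log \frac{T_j}{\eps_j}\Big)^{1/2} = \Big(\log\frac{2^{2j+1}}{\eps}\Big)^{1/2} \lesssim \Big(\log\frac{1+|t|}{\eps}\Big)^{1/2},
\]
which is precisely \eqref{Ggrowth1}.

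The argument is essentially bookkeeping and there is no genuine analytic obstacle beyond what is already built into Lemma \ref{finitedimgrowth}: the main point to verify is that all constants hidden in $\lesssim$ in that lemma are independent of $T$, $\eps$, and $N$, but this is the case because the local existence time coming from Theorem \ref{THM:LWP1} depends polynomially on the $H^{s_1,s_2}\times H^{s_1,s_2}$-norm and is $N$-independent, and because $\mu_N$ is exactly invariant under \eqref{approxMB} by Liouville's theorem. The only mild cosmetic issue is that for small $t$ the right-hand side of \eqref{Ggrowth1} may fail to dominate the constant one, but this is harmlessly absorbed into the implicit constant (or, equivalently, one can restrict attention to $\eps$ small enough so that $\log(1/\eps) \geq 1$).
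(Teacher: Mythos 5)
Your proposal is correct and matches the paper's own proof: the paper also takes $T_j = 2^j$, $\eps_j = \eps/2^{j+1}$, applies Lemma \ref{finitedimgrowth} for each $j$, and intersects the resulting sets, with countable subadditivity giving the measure bound and the dyadic choice giving \eqref{Ggrowth1}. Your additional bookkeeping (choosing the minimal $j$ with $|t|\le T_j$ and absorbing constants) is exactly the implicit content of the paper's one-line verification.
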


\begin{proof} 
With $T_j = 2^j$ and $\eps_j = \frac{\eps}{2^{j+1}}$, construct $\Omega^{(j)}_{N, \eps_j}$ described in Lemma \ref{finitedimgrowth}.
Then, let $\Omega'_{N, \eps} = \bigcap_{j = 1}^\infty \Omega^{(j)}_{N, \eps_j}$.
By construction, we have \eqref{Ggrowth1} for $(u^N_0, v^N_0) \in \Omega'_{N, \eps_j}$,
and 
$\mu_N \big( (\Omega_{N, \eps}')^c\big) \leq \sum_{j = 1}^\infty \mu_N\big( ( \Omega^{(j)}_{N, \eps_j} )^c \big) < \eps. $
\end{proof}

\begin{proposition}  \label{GWPensemble2}
Let $\frac{1}{4} < s_1 < \frac{1}{2} < s_2 < 1$ with $ 2s_1 > s_2 > \frac{1}{2} + \frac{1}{2} \max( \nu_{c_1},\nu_{d_1},\nu_{d_2})$
 and $\eps > 0$.
Then, there exists a set $\Omega_{ \eps} \subset  H^{s_1, s_2} \times H^{s_1, s_2}$ such that
$\mu ( \Omega_{ \eps}^c) < \eps$ and, 
for a set of data $(u_0, v_0) \in \Omega_\eps$,
the Majda-Biello system \eqref{MB} 
is globally well-posed with the bound
\[ \| (u, v ) (t) \|_{H^{s_1, s_2} \times H^{s_1, s_2}} \lesssim \bigg( \log \frac{1 + |t|}{\eps} \bigg)^\frac{1}{2} \text{ for all } t \in \mathbb{R}. \]

\end{proposition}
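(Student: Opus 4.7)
The plan is to follow Bourgain's strategy from \cite{BO4}: the invariance of $\mu_N$ already yields, via Corollary \ref{finitedimGWP}, the desired growth bound for the finite-dimensional flow \eqref{approxMB} on a set of large $\mu_N$-measure, so it suffices to transfer this bound to the full system \eqref{MB} by approximating $(u, v)$ by its truncations $(u^N, v^N)$ and invoking the local well-posedness of Theorem \ref{THM:LWP1}.

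The construction of $\Omega_\eps$ proceeds in two steps. For each $N$, Corollary \ref{finitedimGWP} supplies a set $\Omega'_{N,\eps/2}$ of $\mu_N$-measure at least $1 - \eps/2$ on which the finite-dim flow satisfies the growth bound. I would lift this to a cylinder set in the ambient space,
\[
\widetilde{\Omega}_{N,\eps} := \{(u_0, v_0) \in H^{s_1, s_2} \times H^{s_1, s_2} : (\mathbb{P}_N u_0, \mathbb{P}_N v_0) \in \Omega'_{N,\eps/2}\},
\]
and control $\mu(\widetilde{\Omega}_{N,\eps}^c)$ by comparing $\mu \circ \mathbb{P}_N^{-1}$ with $\mu_N$. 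The two measures differ by replacing $\int u v^2\, dx$ with $\int \mathbb{P}_N u (\mathbb{P}_N v)^2\, dx$ inside the exponential density (and likewise on the $L^2$-cutoff); the uniform $L^r$ bounds of Lemma \ref{LEM:GABSCONTI} together with the $\rho$-a.s.\ convergence of the nonlinear densities yield total-variation convergence $\mu \circ \mathbb{P}_N^{-1} \to \mu_N$. Hence $\mu(\widetilde{\Omega}_{N,\eps}^c) \leq \eps$ for all $N$ large, and $\Omega_\eps := \liminf_{N \to \infty} \widetilde{\Omega}_{N,\eps}$ satisfies $\mu(\Omega_\eps^c) \lesssim \eps$ by Fatou.

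For $(u_0, v_0) \in \Omega_\eps$, the truncated data $(u_0^N, v_0^N) = (\mathbb{P}_N u_0, \mathbb{P}_N v_0)$ lies in $\Omega'_{N, \eps/2}$ for all large $N$, so the finite-dim solutions $(u^N, v^N)$ enjoy the uniform-in-$N$ growth bound for all $t \in \mathbb{R}$. On the support of $\rho$ one has $\widehat{u_0}(n) \sim g_n(\omega)/n$ with $\{g_n\}$ i.i.d.\ standard complex Gaussian; Borel--Cantelli gives $\jb{n}^{s_2}|\widehat{u_0}(n)| \to 0$ a.s.\ for $s_2 < 1$, so $(u_0^N, v_0^N) \to (u_0, v_0)$ in $H^{s_1, s_2} \times H^{s_1, s_2}$ for $\mu$-a.e.\ $(u_0, v_0)$. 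Combined with continuous dependence from Theorem \ref{THM:LWP1}, $(u^N, v^N) \to (u, v)$ in $C([-T_0, T_0]; H^{s_1, s_2} \times H^{s_1, s_2})$, where $T_0$ depends polynomially on the initial norm. The uniform bound passes to the limit, and since the norm of the limiting solution grows only logarithmically in $t$ while the local time shrinks only polynomially, one iterates the local theory to obtain a global solution with the stated growth. The principal obstacle is the measure comparison in the second step---the nonlinear density of $\mu$ is not exactly the pushforward of that of $\mu_N$, and reconciling them uniformly in $N$ is the technical crux; the remaining steps, namely truncation convergence and the iteration, are standard once the local theory and uniform bound are in hand.
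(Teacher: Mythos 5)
Your overall strategy (invariance of $\mu_N$ plus approximation of \eqref{MB} by \eqref{approxMB}) is the same as the paper's, but the transfer mechanism has genuine gaps. First, the measure step fails as stated: from $\mu(\widetilde{\Omega}_{N,\eps}^c)\leq \eps$ for each $N$ you cannot conclude $\mu\big((\liminf_N \widetilde{\Omega}_{N,\eps})^c\big)\lesssim \eps$, because the complement is a $\limsup$ and Fatou only gives a lower bound for the measure of a $\limsup$. To control it you would need summable exceptional measures $\eps_N$ (Borel--Cantelli), but then the bound of Corollary \ref{finitedimGWP} on $\Omega'_{N,\eps_N}$ degrades to $\big(\log\frac{1+|t|}{\eps_N}\big)^{1/2}\sim \big(N+\log\frac{1+|t|}{\eps}\big)^{1/2}$, destroying the claimed uniformity in $N$. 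The paper avoids any $\liminf$ in $N$: for each fixed $T$ and $\eps$ it works with a single $N=N(T,\eps)$, bounds $\mu(\widetilde{\Omega}_\eps^c)$ by Cauchy--Schwarz against $\rho$ using Lemma \ref{LEM:GABSCONTI}, and only at the end intersects over dyadic times $T_j=2^j$ with $\eps_j=\eps\, 2^{-(j+1)}$.

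Second, and more importantly, the comparison of the two flows is not ``continuous dependence'' plus ``iterate the local theory.'' Since $(u^N,v^N)$ solves \eqref{approxMB} rather than \eqref{MB}, continuity of the solution map of \eqref{MB} says nothing about it; one needs a perturbation estimate between the two equations. The paper proves, on each interval $[j\delta,(j+1)\delta]$ with $\delta\sim K^{-\theta}$, a difference estimate for $w-W$ via the bilinear estimates and a $\mathbb{P}_{N/2}$-decomposition of $F$ and $G$, with the quantitative gain $N^{-\gamma}$ coming from slightly higher regularity $H^{\sigma_1,\sigma_2}$ of the data; this is exactly why the paper's set carries the extra constraint $(u_0,v_0)\in\Omega_B(\sigma_1,\sigma_2,K)$, which your $\Omega_\eps$ does not impose. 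Moreover your globalization is circular: the assertion that ``the norm of the limiting solution grows only logarithmically in $t$'' is the conclusion to be proved, and your limit argument furnishes it only on the first local existence interval; iterating the bare local theory beyond that merely doubles the norm each step. What is required is the simultaneous induction of the paper: given $\|(u,v)(j\delta)\|_{H^{s_1,s_2}}\leq K+1$ and closeness to $(U,V)(j\delta)$, the local theory and the difference estimate yield closeness at $(j+1)\delta$ up to an extra $CN^{-\gamma}K$, and the bound $\|(U,V)((j+1)\delta)\|_{H^{s_1,s_2}}\leq K$ from the invariance of $\mu_N$ then restores the bound for $(u,v)$, provided $N$ is chosen so large that $\big[\tfrac{T}{\delta}\big]KN^{-\gamma}\ll 1$. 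This error-accumulation bookkeeping, with $N$ depending on $T$ and $\eps$, is the crux of the proof and is missing from your proposal.
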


\begin{proof} 
First, fix $\s_1, \s_2$ such that $ \frac{1}{4} < s_1 < \s_1 < \frac{1}{2} < s_2 < \s_2 < 1$ with $2\s_1 > \s_2$.
Also, fix $T< \infty$, $\eps >0$, and large $N = N(T, \eps)$  (to be determined later.)
Consider \eqref{approxMB}
with $(U, V) = (u^N, v^N)$ and $(U ,V)|_{t = 0} = (U_0, V_0) = (\mathbb{P}_N u_0,  \mathbb{P}_N v_0)$.
Then, with $\g = \min ( \s_1 - s_1, \s_2 - s_2)>0$, we have
\begin{equation} \label{Ghomo1}
 \| (u_0, v_0) - (U_0, V_0) \|_{H^{s_1, s_2} } \lesssim N^{-\g} \| (u_0, v_0)  \|_{H^{\s_1, \s_2} }.
\end{equation}

As in the proof of Lemma \ref{finitedimgrowth}, construct the $\Omega_{N, \eps}$ set with  
the large radius $K \sim \big( \log \frac{T}{\eps^2} \big)^\frac{1}{2} $
such that 
$ d (a_0, b_0) \otimes \rho_N( \Omega_{N, \eps}^c) <  Z^{-\frac{1}{2}}{\eps^2} $.
Now, let $\wt{\Omega}_{\eps} = \{ (a_n, b_n)_{n\geq 0} \in \Omega_B (\s_1, \s_2, K): (a_n, b_n)_{0 \leq n \leq N} \in \Omega_{N, \eps} \}$.
Note that $\wt{\Omega}_{\eps}$ really depends on both $\eps$ and $T$ since $N$ depends on $\eps$ and $T$.
This dependence is explicitly discussed in the last paragraph of the proof.
Then, with the understanding that $\wt{\Omega}_{\eps}^c = \Omega_B \setminus \wt{\Omega}_{\eps}$, we have 
$ d (a_0, b_0) \otimes \rho( \wt{\Omega}_{ \eps}^c) <  Z^{-\frac{1}{2}}{\eps^2} $.
Then, by Lemma \ref{LEM:GABSCONTI} and Cauchy-Schwarz inequality, 
we have $\mu( \wt{\Omega}_{ \eps}^c) < {\eps}$.

Let $(u_0, v_0) \in \wt{\Omega}_{ \eps}$.
This implies that for $(U_0, V_0) = (u_0^N, v_0^N) \in \Omega_{N, \eps}$.
Then, as in the proof of Lemma \ref{finitedimgrowth},  we have 
$ 
\| (U, V ) ( j \dl) \|_{H^{s_1, s_2} } \leq K 
$ 
for  $j = 0, 1 , \cdots, \big[\frac{T}{\dl} \big]$.   
Also, from the local theory, we have
$\| (u, v ) (t) \|_{H^{s_1, s_2} }$,  
$\| (U, V ) (t) \|_{H^{s_1, s_2} } \leq 2K$
for $|t| \leq \dl$.

Now, consider the difference of the solutions $(u, v)$ and $(U, V)$ to \eqref{MB}and  \eqref{approxMB}.
By writing as integral equations, we have
\[ \begin{cases} 
u(t) - U(t) = S(t) \big(u_0 - U_0\big) - \int_0^t S(t - t') F(t') d t'\\  
v(t) - V(t) = S_\al(t) \big(v_0 - V_0\big) -\int_0^t  S_\al(t - t') G(t')d t', 
\end{cases}
\]
where 
$F(t) = \dx \big(\frac{v^2}{2}\big)(t) - \mathbb{P}_N \dx \big(\frac{V^2}{2}\big)(t)$ and 
$G(t) = \dx \big(uv\big)(t) - \mathbb{P}_N  \dx \big(UV\big)(t)$.
Now, let $w = (u, v)$ and $W = (U, V)$.
From the linear estimates,  we have 
\begin{equation} \label{Ghomo2}
\big\| \eta(t) \big(S(t), S_\al(t)\big) (w_0 - W_0) \big\|_{Y^{s_1, s_2}} \lesssim \| w_0 - W_0 \|_{H^{s_1, s_2}}.
\end{equation}

\noindent
Since $\mathbb{P}_N \Big( \big( \mathbb{P}_\frac{N}{2}v\big)^2 \Big) =  \big( \mathbb{P}_\frac{N}{2}v\big)^2 $, we have
\begin{align*}
F  = \frac{1}{2} \dx \Big(  v^2 - \big( \mathbb{P}_\frac{N}{2} v\big)^2 \Big)
+ \frac{1}{2} \mathbb{P}_N \dx \Big( \big( \mathbb{P}_\frac{N}{2}v\big)^2 - v^2 \Big) 
+ \frac{1}{2} \mathbb{P}_N\dx (v^2 - V^2).
\end{align*}

\noindent
Then, from the local theory  along with the boundedness of $\mathbb{P}_N$, we have
\begin{align*}
\bigg\|\eta_{_\dl} &(t) \int_0^t S(t - t')  F(t') d t'\bigg\|_{Y^{s_1, s_2}[-\dl, \dl]}
\lesssim \big\|F \big\|_{Z^{s_1, s_2}[-\dl, \dl]} \\
& \lesssim \left\|  \dx \big(v + \mathbb{P}_\frac{N}{2} v \big)\big(v - \mathbb{P}_\frac{N}{2} v \big) \right\|_{Z^{s_1, s_2}[-\dl, \dl]} 
+ \left\|  \dx \big( v +V \big) \big( v -V \big) \right\|_{Z^{s_1, s_2} [-\dl, \dl] } \\
& \lesssim \dl^\theta \Big( \big\|v + \mathbb{P}_\frac{N}{2} v \big\|_{Y_\al^{s_1, s_2}}
\big\|v - \mathbb{P}_\frac{N}{2} v \big\|_{Y_\al^{s_1, s_2}[-\dl, \dl]}   
+ \|v + V \|_{Y_\al^{s_1, s_2}}
\|v - V\|_{Y_\al^{s_1, s_2}[-\dl, \dl]}  \Big)
\end{align*}
for some $\theta > 0$.
Note that 
$\big\|v - \mathbb{P}_\frac{N}{2} v \big\|_{Y_\al^{s_1, s_2}[-\dl, \dl]}  \lesssim N^{-\g} K$
and 
$\big\|\mathbb{P}_\frac{N}{2} v \big\|_{Y_\al^{s_1, s_2}[-\dl, \dl]} $, 
$\| V \|_{Y_\al^{s_1, s_2}[-\dl, \dl]} \leq  2K $.
Hence, we have
\begin{equation} \label{Gduhamel1}
\bigg\|\eta_{_\dl}(t)\int_0^t  S(t - t')    F(t') d t'\bigg\|_{Y^{s_1, s_2}[-\dl, \dl]}  
 \leq C \dl^\theta \big(N^{-\g} + \| w - W  \|_{Y^{s_1, s_2} \times Y_\al^{s_1, s_2}[-\dl, \dl]} \big).
\end{equation}

\noindent
Similarly, we have
\begin{align*}
G & = \dx\big( u - \mathbb{P}_\frac{N}{2}u  \big)v
+ \dx (\mathbb{P}_\frac{N}{2}u) ( v-  \mathbb{P}_\frac{N}{2}v) 
 + \mathbb{P}_N \dx (\mathbb{P}_\frac{N}{2}u) (\mathbb{P}_\frac{N}{2}v -  v ) \\
&+ \mathbb{P}_N \dx ( \mathbb{P}_\frac{N}{2}u - u ) v
 + \mathbb{P}_N \dx u( v - V)
+ \mathbb{P}_N \dx ( u - U)V
\end{align*}

\noindent
and thus 
\begin{equation} \label{Gduhamel2}
 \bigg\|\eta_{_\dl}(t)\int_0^t  S_\al(t - t')   G(t') d t'\bigg\|_{Y_\al^{s_1, s_2}[-\dl, \dl]} 
 \leq C \dl^\theta \big(N^{-\g} + \| w- W  \|_{Y^{s_1, s_2} \times Y_\al^{s_1, s_2}[-\dl, \dl]} \big).
\end{equation}

\noindent 
From  \eqref{Ghomo2},  \eqref{Gduhamel1}, and  \eqref{Gduhamel2}, 
we have
\begin{align*}
\| w - W  \|_{Y^{s_1, s_2}  \times Y_\al^{s_1, s_2}  [-\dl, \dl]}
 & \leq  C \| w_0 - W_0 \|_{H^{s_1, s_2} }  \\
& + C \dl^\theta \big(N^{-\g} + \| w - W  \|_{Y^{s_1, s_2} \times Y_\al^{s_1, s_2}[-\dl, \dl]} \big).
\end{align*}

\noindent
Then, by choosing $\dl$ sufficiently small, it follows from \eqref{Ghomo1} that
\begin{align*} 
\| (w - W) (\dl) \|_{H^{s_1, s_2}} \lesssim \| w - W \|_{Y^{s_1, s_2}  \times Y_\al^{s_1, s_2}  [-\dl, \dl]} 
 \lesssim \| w_0 - W_0 \|_{H^{s_1, s_2} } \lesssim N^{-\g} K. 
\end{align*}

\noindent
By choosing $N$ large such that $ \big[\frac{T}{\dl}\big] KN^{-\g} \ll 1$,
we can repeat this argument $\big[\frac{T}{\dl}\big]$ times over the intervals $[j \dl, (j+1) \dl]$ for $j = 0, 1, \cdots, \big[\frac{T}{\dl}\big]-1$
and obtain
$ \| (u, v)( j \dl) \|_{H^{s_1, s_2}} \leq K + 1$
for $ j = 0, 1, \cdots, \big[\frac{T}{\dl}\big] $. 
Hence, from the local theory and the time-reversibility of the equation,  the solution $(u, v)$  
with the initial data $(u_0, v_0) \in \wt{\Omega}_\eps$ exists on $[-T, T]$ and moreover we have
$ \| (u, v)( t ) \|_{H^{s_1, s_2}} \leq 2(K + 1) \sim \big( \log \frac{T}{\eps} \big)^\frac{1}{2}$
for all $ |t| \leq T$.

Note that  $\wt{\Omega}_\eps$ constructed above depends on $N$, $T$,  and $\eps$, 
where $N$, in turn, depends on $T$ and $\eps$.
To be explicit about this dependence, 
let us denote $\wt{\Omega}_\eps$ and $N$ by $\wt{\Omega}(T, \eps)$ and $N(T, \eps)$.
Now, fix $\eps > 0$, and 
let $T_j = 2^j$ and $\eps_j = \frac{\eps}{2^{j+1}}$ for $j \in \mathbb{N}$. 
Then, construct $\wt{\Omega}^{(j)}_\eps = \wt{\Omega}(T_j, \eps_j)$ with $N_j = N(T_j, \eps_j)$.
By construction, $\mu\big(({\wt{\Omega}^{(j)}_\eps})^c\big) < \eps_j $.
Note that $K_j \sim \big(\log \frac{T_j}{\eps^2_j} \big)^\frac{1}{2}
= \big(\log \frac{2^{3j + 2}}{\eps^2} \big)^\frac{1}{2}
\sim \big(\log \frac{{T_j}}{\eps} \big)^\frac{1}{2}$.
Thus, we can choose $N_j$ sufficiently large so that 
$\big[\frac{T_j}{\dl_j}\big] K_j N_j^{-\g} \lesssim T_j^{1+} N_j^{-\g} \ll 1$.
Also, for $(u_0, v_0) \in \wt{\Omega}^{(j)}_\eps$, we have 
\[\| (u, v)(t)\|_{H^{s_1, s_2}} \lesssim \Big( \log \frac{T_j\cdot 2^{j+1}}{\eps} \Big)^\frac{1}{2}
= \Big( \log \frac{ 2^{2j+1}}{\eps} \Big)^\frac{1}{2}
\sim \Big( \log \frac{ 2^{j}}{\eps} \Big)^\frac{1}{2} 
=\Big( \log \frac{T_j }{\eps} \Big)^\frac{1}{2}\]

\noindent
for $|t| \leq T_j$.
Finally, let $\Omega_\eps = \bigcap_{j = 1}^\infty \wt{\Omega}^{(j)}_\eps$.
Then, $\Omega_\eps$ has the desired property.
\end{proof}

\begin{remark} \rm
This establishes the global well-posedness of the Majda-Biello system
almost surely on the statistical ensemble (with the $L^2$ cutoff and the mean 0 assumption on $u_0$.)
\end{remark}

As a corollary, we obtain
\begin{corollary}  \label{unifconvMB2}
Let  $\frac{1}{4}< s_1 < \s_1 < \frac{1}{2} < s_2 < \s_2 < 1$ with $ 2s_1 > s_2 > \frac{1}{2} + \frac{1}{2} \max( \nu_{c_1},\nu_{d_1},\nu_{d_2})$ 
and $2\s_1 > \s_2$. 
Also, let $\Omega_\eps$ be as in  Proposition \ref{GWPensemble2}.
Then, for $T < \infty$, we have
\[ \big\| (u, v) - (u^N, v^N) \big\|_{C([-T, T] ; H^{s_1, s_2} \times H^{s_1, s_2})} \to 0\]
as $ N \to \infty$
uniformly for $ (u_0, v_0) \in \Omega_\eps$.
\end{corollary}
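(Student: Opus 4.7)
The plan is to read off the quantitative convergence rate already contained in the proof of Proposition \ref{GWPensemble2}: that argument does not merely produce a well-posedness statement, but actually controls $w - W := (u, v) - (u^N, v^N)$ in terms of $N^{-\g}$ on $[-T, T]$. The key observation is that, by the construction of $\Omega_\eps$ carried out in Proposition \ref{GWPensemble2}, the norm $\|(u_0, v_0)\|_{H^{\s_1, \s_2}}$ is bounded by the same constant $K = K(T, \eps)$ for every $(u_0, v_0) \in \Omega_\eps$. This uniform starting point is what will ultimately give the uniformity required by the statement.

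First I would collect the three ingredients already established inside the proof of Proposition \ref{GWPensemble2}: the truncation bound \eqref{Ghomo1}, which gives $\|(u_0, v_0) - (u_0^N, v_0^N)\|_{H^{s_1, s_2}} \lesssim N^{-\g} K$; the Duhamel-type estimates \eqref{Gduhamel1}--\eqref{Gduhamel2}; and the uniform-in-$N$ a priori bounds $\|(u, v)(t)\|_{H^{s_1, s_2}}, \, \|(u^N, v^N)(t)\|_{H^{s_1, s_2}} \leq 2K$ for $|t| \leq T$. On a short slice $[j\dl, (j+1)\dl]$ with $\dl = \dl(K)$ chosen small enough to absorb the term $C\dl^\theta \|w - W\|_{Y^{s_1, s_2} \times Y_\al^{s_1, s_2}[-\dl, \dl]}$ into the left hand side, these inputs combine to give
\begin{equation*}
\|(w - W)((j+1)\dl)\|_{H^{s_1, s_2}} \leq c \,\|(w - W)(j\dl)\|_{H^{s_1, s_2}} + c\, N^{-\g},
\end{equation*}
with $c = c(K)$ independent of $j$ and $N$. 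Iterating this over $j = 0, 1, \ldots, [T/\dl]$ (and symmetrically on $[-T, 0]$ by time-reversibility of the flow) and summing the resulting geometric series, I obtain
\begin{equation*}
\sup_{|t| \leq T} \|(w - W)(t)\|_{H^{s_1, s_2}} \leq C(K, T)\, N^{-\g}.
\end{equation*}

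Since $T$ and $\eps$ are fixed while $N \to \infty$, and $C(K, T)$ depends on the datum only through the uniform bound $K = K(T, \eps)$, the right hand side tends to $0$ uniformly for $(u_0, v_0) \in \Omega_\eps$, yielding the desired conclusion. The only delicate point is ensuring that the local constants appearing in the bilinear estimates of Section 5 remain uniform throughout the iteration, so that the slice length $\dl$ does not shrink from step to step and $[-T, T]$ can be covered in finitely many pieces; this is precisely what the uniform a priori bound on $\|(u, v)(t)\|_{H^{s_1, s_2}}$ and $\|(u^N, v^N)(t)\|_{H^{s_1, s_2}}$ recalled above guarantees. Once that is in hand, the argument is essentially mechanical.
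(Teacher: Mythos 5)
Your proposal is correct and follows essentially the paper's intended route: the corollary is read off from the quantitative difference bound $\sup_{|t|\le T}\|(w-W)(t)\|_{H^{s_1,s_2}\times H^{s_1,s_2}} \lesssim \big[\tfrac{T}{\dl}\big] K N^{-\g}$ produced by iterating \eqref{Ghomo1}, \eqref{Gduhamel1}, \eqref{Gduhamel2} on slices of length $\dl(K)$, with uniformity on $\Omega_\eps$ coming from the uniform bound $\|(u_0,v_0)\|_{H^{\s_1,\s_2}}\le K(T,\eps)$ built into the construction. One small touch-up: for arbitrary large $N$ (not just the specific $N(T,\eps)$ used in Proposition \ref{GWPensemble2}, where the bound on $(u^N,v^N)$ came from membership in $\Omega_{N,\eps}$ and invariance of $\mu_N$), the a priori bound on $(u^N,v^N)$ should be obtained by bootstrapping the difference estimate off the known global bound on $(u,v)$, which your iteration accommodates without change.
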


Now, we are ready to prove the invariance of $\mu$. 
Let \[X = \bigcup_M \big\{ f = f\big( (a_n)_{|n| \leq M}, (b_n)_{|n| \leq M} \big) \text{ continuous and bounded}\big\}.\]

\noindent
i.e. $f \in X$ is bounded and there exists $M$ such that $f$ depends  continuously on a finitely many modes $\{ |n| \leq M\}$. 
Let $\cj{X}$ be the closure of $X$.
Also, for $|t| < \infty $, let $S^t$ be the flow map for the Majda-Biello system \eqref{MB} 
and $S_N^t$ be the flow map for its finite dimensional approximation \eqref{approxMB}.

Note that $\rho_N$ is obtained from $\rho$ by integrating in $(a_n, b_n)_{n > N}$. 
From Sobolev inequality, 
\[\bigg|\int \mathbb{P}_N\phi (\mathbb{P}_N \psi)^2 - \phi \psi^2 dx\bigg|
\lesssim \|\phi^N - \phi\|_{H^\frac{1}{6}} \|\psi\|_{H^\frac{1}{6}}^2
+\|\phi\|_{H^\frac{1}{6}} \|\psi\|_{H^\frac{1}{6}}\|\psi^N - \psi\|_{H^\frac{1}{6}}
\to 0, \text{ a.s.}
\] 

\noindent
 as $N \to \infty$ since $(\phi, \psi) \in H^\frac{1}{6}$ a.s.
Also, 
we have 
$e^{\frac{1}{2} \int (\mathbb{P}_N\phi- a_0) (\mathbb{P}_N\psi - b_0)^2} \chi_{\Omega_{N, B}} \leq 
e^{c_1 \|\phi - a_0\|_{H^\frac{1}{6}} + c_2 \|\psi - b_0\|_{H^\frac{1}{6}} }\chi_{\Omega_B} \in L^1(d\rho)$
for any $N$.
Thus, by Dominated Convergence Theorem, we have
$\lim_{N\to \infty }Z_N = Z$.
Then, from the boundedness of $f$ and Dominated Convergence Theorem,  we see that 
\begin{align} \label{Gdecay}
& I_N := \int f\big(S^t_N(\mathbb{P}_N \phi, \mathbb{P}_N \psi)\big) (d\mu - d\mu_N)\\
& = \int \Big\{ Z_N^{-1}e^{\frac{1}{2} \int \mathbb{P}_N\phi (\mathbb{P}_N\psi)^2} \chi_{\Omega_{N, B}} 
- Z^{-1}  e^{\frac{1}{2} \int \phi \psi^2 } \chi_{\Omega_{B}} \Big\} 
f\big(S^t_N(\mathbb{P}_N \phi, \mathbb{P}_N \psi)\big) d (a_0, b_0) \otimes d\rho\notag
\end{align}

\noindent
tends to 0 as $N \to \infty$.

\begin{theorem} \label{THM:GIBBSinvariance}
The Gibbs measure $\mu$ is invariant under the flow of the Majda-Biello system \eqref{MB} in the sense that
  \[ \int f\big( S^t(\phi, \psi)\big) \, \mu(d\phi, d\psi) = \int f(\phi, \psi) \, \mu(d\phi, d\psi)\]
for all $f \in \cj{X}$.
\end{theorem}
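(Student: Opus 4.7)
The plan is to establish the identity first for the dense class $f\in X$ (test functions depending continuously on only finitely many Fourier modes $\{|n|\le M\}$) and then extend to $\cj X$ by density, using that $\mu$ is a probability measure so all the quantities below depend on $f$ only through $\|f\|_{L^\infty}$. The core reduction exploits two ingredients already in place: (i) $\mu_N$ is invariant under the finite-dimensional flow $S_N^t$ of \eqref{approxMB}, which follows from Liouville's theorem once one recalls that both the truncated Hamiltonian $H_N$ in \eqref{GFINITEHAMIL1} and the truncated $L^2$ norm are conserved by \eqref{approxMB}; and (ii) $\mu$ and $\mu_N$ are close in the sense already quantified in \eqref{Gdecay}.

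Fix $f\in X$ with cutoff $M$, fix $t\in\R$, and take $N\ge M$.  Since $f(\omega)=f(\mathbb{P}_N\omega)$ for such $N$, I will decompose
\begin{align*}
\int f(S^t\omega)\,d\mu-\int f(\omega)\,d\mu
=&\ \underbrace{\int\bigl[f(S^t\omega)-f(S_N^t\mathbb{P}_N\omega)\bigr]\,d\mu}_{\mathrm{A}_N}
+\underbrace{\int f(S_N^t\mathbb{P}_N\omega)\,(d\mu-d\mu_N)}_{\mathrm{B}_N}\\
&+\underbrace{\int\bigl[f(S_N^t\mathbb{P}_N\omega)-f(\mathbb{P}_N\omega)\bigr]\,d\mu_N}_{\mathrm{C}_N}
+\underbrace{\int f(\mathbb{P}_N\omega)\,(d\mu_N-d\mu)}_{\mathrm{D}_N}.
\end{align*}
The term $\mathrm{C}_N$ vanishes identically by the invariance of $\mu_N$ under $S_N^t$.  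The terms $\mathrm{B}_N$ and $\mathrm{D}_N$ both tend to $0$ as $N\to\infty$ by exactly the Dominated Convergence argument already recorded for $I_N$ in \eqref{Gdecay}, using $Z_N\to Z$, boundedness of $f$, and almost-sure convergence of the densities through the Sobolev embedding into $H^{1/6}$.

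For the remaining term $\mathrm{A}_N$ I will fix $\eps>0$, split the $\mu$-integral along $\Omega_\eps$ and its complement (with $\Omega_\eps$ as in Proposition \ref{GWPensemble2}), and invoke Corollary \ref{unifconvMB2}: on $\Omega_\eps$ the truncated solution $(u^N,v^N)(t)$ converges to $(u,v)(t)$ in $H^{s_1,s_2}\times H^{s_1,s_2}$ uniformly in the initial data, and since each of the finitely many Fourier modes entering $f$ is a continuous linear functional on $H^{s_1,s_2}$ (as $s_1>0$), the difference $f(S_N^t\mathbb{P}_N\omega)-f(S^t\omega)$ tends to $0$ uniformly on $\Omega_\eps$.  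On $\Omega_\eps^c$ the integrand is bounded by $2\|f\|_{L^\infty}$ while $\mu(\Omega_\eps^c)<\eps$, yielding $\limsup_{N\to\infty}|\mathrm{A}_N|\le 2\|f\|_{L^\infty}\eps$; sending $\eps\downarrow 0$ finishes the identity on $X$, and density extends it to $\cj X$.

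The principal obstacle is $\mathrm{A}_N$, which demands a genuine comparison between the infinite-dimensional flow $S^t$ (defined only $\mu$-a.s., on $\bigcup_\eps\Omega_\eps$ supplied by Proposition \ref{GWPensemble2}) and the finite-dimensional flow $S_N^t$.  This is precisely what Corollary \ref{unifconvMB2} has been engineered to deliver, and it in turn rests on the new low-regularity LWP of Theorem \ref{THM:LWP1} together with the $N$-uniform growth bound of Corollary \ref{finitedimGWP} carried over to $S^t$ by the stability argument in Proposition \ref{GWPensemble2}.  Without this machinery the argument would not close, since $\mu$ is supported in a regime where the classical LWP on $H^s\times H^s$ fails; once it is available, however, the rest of the proof is essentially the Bourgain/Rey-Bellet--Thomas scheme transcribed to the present setting.
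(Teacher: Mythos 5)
Your proposal is correct and follows essentially the same route as the paper: the four-term telescoping $\mathrm{A}_N+\mathrm{B}_N+\mathrm{C}_N+\mathrm{D}_N$ is just a more explicit bookkeeping of the paper's estimate $\big|\int f(S^t\varphi)\,d\mu-\int f(S_N^t\mathbb{P}_N\varphi)\,d\mu_N\big|\le \wt{I}_N(\Omega_\eps)+\wt{I}_N(\Omega_\eps^c)+|I_N|$ followed by the invariance of $\mu_N$ and the convergence $\int f(\mathbb{P}_N\varphi)\,d\mu_N\to\int f(\varphi)\,d\mu$. All the key ingredients you invoke (Liouville invariance of $\mu_N$, the dominated-convergence argument of \eqref{Gdecay}, uniform convergence on $\Omega_\eps$ from Corollary \ref{unifconvMB2}, and density in $\cj{X}$) are exactly those used in the paper.
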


\begin{proof}
Fix $f \in X$, $t > 0$, and $\eps > 0$.
Let   
$ \wt{I}_N(A) =  \big| \int_{ A}  f(  S^t\varphi )  - f( S_N^t \mathbb{P}_N\varphi) d \mu \big| $,
where $\varphi = (\phi, \psi)$.
By Corollary \ref{unifconvMB2}, we have
$ \| S^t\varphi - S^t_N \mathbb{P}_N \varphi \|_{ H^{s_1, s_2}} \to 0$ as $ N \to \infty$
uniformly for $(\phi, \psi) \in \Omega_\eps$.
Since $f$ is continuous, there exists $N_1$ such that
\[  \wt{I}_N( \Omega_\eps)
 \leq \sup_{ \varphi \in \Omega_\eps} | f( S^t\varphi) - f( S_N^t\mathbb{P}_N \varphi) | 
 < \eps , \]

\noindent
for all $N \geq N_1$. 
On $\Omega_\eps^c$, we have
$\wt{I}_N( \Omega^c_\eps)
\leq 2 \|f\|_{L^\infty} \mu(\Omega^c_\eps) \lesssim \eps.$
From \eqref{Gdecay}, there exists $N_2$ such that $|I_N| < \eps$
 for all $ N \geq N_2$. 
Putting all together, we have
\begin{align} \label{Gibbsest}
  \bigg|  \int f & (  S^t\varphi) d\mu  - \int f( S_N^t\mathbb{P}_N\varphi) d \mu_N \bigg| 
\leq \wt{I}_N(\Omega_\eps) + \wt{I}_N  (\Omega_\eps^c)
 + |I_N|
\lesssim \eps 
\end{align}

\noindent
 for all $ N \geq \max ( N_1, N_2)$.
Therefore, from \eqref{Gibbsest} and the invariance of $\mu_N$, we have
\begin{align} \label{invariance2}
\int f(  S^t \varphi) d \mu & = \lim_{N \to \infty} \int f( S_N^t \mathbb{P}_N\varphi) d \mu_N 
 = \lim_{N\to \infty} \int f(\mathbb{P}_N\varphi) d \mu_N
= \int f (\varphi) d \mu. 
\end{align}

\noindent
By density, \eqref{invariance2} holds for all $f \in \cj{X}.$
\end{proof}

\section{Appendix}
In this appendix, we present the proof of 
Lemma \ref{LEM:tight} following the notations introduced in Section 3.
We only show the second estimate in \eqref{tight1}.
From  H\"older inequality and Lemma \ref{LEM:GABSCONTI}, we have
\begin{align*}
\mu & \big(\Omega_{ B}  \setminus \Omega_{ B} (s_1, s_2, K) \big)
= \int_{ \{ \| (\phi, \psi) \|_{H^{s_1, s_2}} > K \} }  \chi_{\Omega_{ B}} d \mu \\
& \lesssim B^2 \bigg(\int_{ \{ \| (\phi, \psi) \|_{H^{s_1, s_2}} > K \} }  \chi_{\Omega_{ B}} d \rho \bigg)^\frac{1}{2} 
 \Big\| \chi_{ \left\{ \| (\phi, \psi) \|_{H^{s_1, s_2}} > K \right\} }  
 \chi_{\Omega_{ B}}  e^{\frac{1}{2} \int \phi \psi^2 dx} 
\Big\|_{L^2 ( d (a_0, b_0) \otimes d \rho)} \\
& \lesssim \rho\big( \| (\phi, \psi) \|_{H^{s_1, s_2}} > K, \  \|(\phi, \psi) \|_{L^2} \leq B, \phi, \psi \text{ mean } 0 \big)^\frac{1}{2}.
\end{align*}

\noindent
For notational simplicity, we assume $\phi$ and $\psi$ have mean 0 in the following.
By the definition of the $H^{s_1, s_2}$ norm, we have
\begin{align} \label{XYZ}
\rho\big( \| (\phi, \psi) \|_{H^{s_1, s_2}}  > K, \  \|(\phi, \psi) \|_{L^2} \leq B \big) 
 \leq \rho\big( \| (\phi, \psi) \|_{H^{s_1}} > \tfrac{1}{2} K, \  \|(\phi, \psi) \|_{L^2} \leq B \big) \notag \\
 + \rho\big( \sup_n \jb{n}^{s_2} | \ft{\phi}(n)| + \sup_n \jb{n}^{s_2} | \ft{\psi}(n)|> \tfrac{1}{2} K, \  \|(\phi, \psi) \|_{L^2} \leq B \big).
\end{align}

Let $d \rho_1= d\rho\big|_{(a_n)_{n \geq 1}}$ and $d\rho_2 = d\rho\big|_{(b_n)_{n \geq 1}}$.
Then, we have
\begin{align*}
\rho\big( \| (\phi, \psi) \|_{H^{s_1}} > \tfrac{1}{2} K, \  \|(\phi, \psi) \|_{L^2} \leq B \big)
 \leq \ & \rho_1 \big( \{\| \phi \|_{H^s} \geq \tfrac{1}{4}K, \|\phi\|_{L^2} \leq B \}\big) \\ 
 + & \rho_2 \big( \{\| \psi \|_{H^s} \geq \tfrac{1}{4} K, \| \psi\|_{L^2} \leq B \}\big). 
\end{align*}

\noindent
We only prove the estimate on $\rho_1$.
Fix $M_0$ dyadic (to be specified later.) Then,  we have
$\big\|\sum_{|n| \leq M_0} \ft{\phi}(n) e^{inx}  \big\|_{H^s} 
\leq C  M_0^s \big( \sum_{|n| \leq M_0} |\ft{\phi}(n)|^2\big)^\frac{1}{2} \leq C  M_0^s B.$
By choosing $C M_0^s B = \frac{1}{8} K$, we have
$\big\|\sum_{|n| \leq M_0} \ft{\phi}(n) e^{inx}  \big\|_{H^s} \leq \frac{1}{8}K$
and $ M_0 \sim \big(\frac{K}{B}\big)^\frac{1}{s}. $

Now, let $\s_j = C 2^{-\eps j}$ for some small $\eps > 0$,
where $C$ is chosen such that $\sum_{j \geq 1} \s_j = \frac{1}{8}$.
Also, let $M_j = M_0 2^j$ dyadic.
Note that $\s_j = C M_0^\eps M_j^{-\eps}$.
Then,  we have 
\[\rho_1\big( \| \phi\|_{H^s} > \tfrac{1}{4}K, \  \| \phi\|_{L^2} \leq B \big)
\leq \sum_{j = 1}^\infty \rho_1\big[ \Big\| \sum_{|n| \sim M_j} \ft{\phi}(n) e^{inx} \Big\|_{H^s} > \s_j K \big].\]

\noindent
Recall that  $\ft{\phi}(n) = \frac{f_n(\omega)}{n}$, 
where $\{f_n(\omega)\}_{n\geq 1}$ are i.i.d. standard complex Gaussian random variables 
and $f_{-n} = \cj{f_n}$. 
Thus, if $\big\| \sum_{|n| \sim M_j} \ft{\phi}(n) e^{inx} \big\|_{H^s}  \geq  \s_j K$, then we have
 $\big( \sum_{n\sim M_j} |f_n(\omega)|^2\big)^\frac{1}{2} \gtrsim R_j := \s_j KM_j^{1-s} $.
Then, using the polar coordinates,  we have
\begin{align} \label{GLR6}
\mathbb{P}_\omega \big[   \Big( \sum_{n\sim M_j} |f_n(\omega)|^2\Big)^\frac{1}{2} \gtrsim R_j \big]
 \sim \int_{B^c(0, R_j)} e^{-\frac{|f|^2}{2}} \prod_{n \sim M_j} df_n
 \lesssim \int_{R_j}^\infty e^{-\frac{r^2}{2}} r^{2\cdot \# \{n \sim M_j\} -1} dr.
\end{align}

\noindent 
Note that the implicit constant in the inequality is $\s(S^{2\cdot \# \{n \sim M_j\} -1})$,
a surface measure of the $2\cdot \# \{n \sim M_j\} -1$ dimensional unit sphere.
We drop it since $\s(S^n) = 2\pi^\frac{n}{2} /\G(\frac{n}{2}) \lesssim 1$.
By change of variables $t = M_j^{-\frac{1}{2}} r$, we have
$r^{2\cdot \# \{n \sim M_j\} -2} \lesssim r^{4M_j} \sim M_j^{2M_j} t^{4M_j}.$
Since $t \geq M_j^{-\frac{1}{2}} R_j = C K M_0^\eps M_j^{\frac{1}{2} -s-\eps} \gtrsim K M_0^\eps M_j^{0+}$
as long as $s < \frac{1}{2}$ (with $\eps>0$ sufficiently small), we have
\begin{equation}\label{GLR100}
M_j^{2M_j} = e^{2M_j \ln M_j} < e^{\frac{1}{8}M_jt^2}, \ 
\text{ and } \ 
 t^{4M_j}  = (t^4) ^M_j < (e^{\frac{1}{8}t^2})^{M_j} = e^{\frac{1}{8}M_jt^2}
\end{equation}

\noindent
for $K$ sufficiently large, independent of $M_j > M_0$.
Thus, we have
$r^{2\cdot\# \{n \sim M_j\} -2} < e^{\frac{1}{4}M_j t^2} = e^{\frac{1}{4}r^2}$
for $r > R$.
From this and \eqref{GLR6}, we have
\begin{equation*}
\mathbb{P}_\omega\big[   \Big( \sum_{n\sim M_j} |f_n(\omega)|^2\Big)^\frac{1}{2} \gtrsim R_j \big]
\leq C \int_{R_j}^\infty e^{-\frac{1}{4}r^2} r dr = C e^{-\frac{1}{4}R_j^2} \leq e^{-cR_j^2} 
= e^{-c\s_j^2 K^2 M_j^{2-2s}}.
\end{equation*}

\noindent
Then, by summing up over $j$, 
 we obtain
\[ \rho_1\big(  \| \phi\|_{H^s} >\tfrac{1}{4}K, \  \| \phi\|_{L^2} \leq B \big)
\leq \sum_{j = 1}^\infty e^{-c\s_j^2 K^2 M_j^{2-2s}}  \leq e^{-c' K^2M_0^{2-2s}} \leq   e^{-c''K^2}. \]

As for the second term in \eqref{XYZ}, we have
\begin{align*}
 \rho\big( \sup_n \jb{n}^{s_2} &  | \ft{\phi}(n)|  + \sup_n \jb{n}^{s_2} | \ft{\psi}(n)|> \tfrac{1}{2} K, \  \|(\phi, \psi) \|_{L^2} \leq B \big) \\
 & \leq \rho_1\big( \sup_n \jb{n}^{s_2} | \ft{\phi}(n)|  > \tfrac{1}{4} K \big) 
+ \rho_2\big( \sup_n \jb{n}^{s_2} | \ft{\psi}(n)|> \tfrac{1}{4} K\big). 
\end{align*}

\noi
First, recall the following integrability result due to Fernique \cite{FER}
for an abstract Wiener space $(i, H, B)$.

\begin{proposition} [Theorem 3.1 in \cite{HUO}]
\label{THM:FER} 
Let $(i, H, B)$ be an abstract Wiener space.
Then, there exists $ c > 0$ such that $ \int_B e^{c \|x\|_B^2} \mu(d x) < \infty$.
Hence, there exists $ c' > 0$ such that $\rho ( \|x\|_B > K) \leq e^{-c'K^2}$.
\end{proposition}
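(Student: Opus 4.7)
\smallskip

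\noindent
\textbf{Proof proposal.} The plan is to follow Fernique's classical rotation-and-iteration argument, which gives Gaussian decay of the tail $\rho(\|x\|_B > K)$ directly; the exponential moment bound then follows by elementary integration. The starting observation is that, since $\rho$ is induced from a Gaussian measure on $H$ (defined on cylinder sets and then extended to $B$), the product measure $\rho \otimes \rho$ on $B \times B$ is invariant under the $45^\circ$ rotation
\begin{equation*}
(x, y) \longmapsto \Big( \tfrac{x+y}{\sqrt{2}}, \, \tfrac{x - y}{\sqrt{2}} \Big),
\end{equation*}
because this invariance holds on each finite-dimensional cylinder and $\|\cdot\|_B$ is a measurable seminorm. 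I would verify this rotation invariance first, as it is the only place where the Gaussian structure enters.

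Next, let $X, Y$ be independent $B$-valued random variables with law $\rho$. Writing $X = \frac{1}{\sqrt{2}}(U+V)$ and $Y = \frac{1}{\sqrt{2}}(U-V)$ where $U = \frac{X+Y}{\sqrt{2}}$, $V = \frac{X-Y}{\sqrt{2}}$, the triangle inequality gives $\|X\|_B, \|Y\|_B \geq \frac{1}{\sqrt{2}}(\|V\|_B - \|U\|_B)$. Combined with the rotation invariance $(U,V) \stackrel{d}{=} (X,Y)$, this yields, for any $0 < s < t$,
\begin{equation*}
\rho(\|x\|_B \leq s) \, \rho(\|x\|_B > t)
\leq \rho\Big( \|x\|_B > \tfrac{t-s}{\sqrt{2}} \Big)^2.
\end{equation*}
Since $\rho$ is a probability measure on $B$ and $\|\cdot\|_B$ is finite $\rho$-a.s., one can choose $t_0$ so large that $\rho(\|x\|_B \leq t_0) \geq 3/4$ (equivalently $\rho(\|x\|_B > t_0) \leq 1/4$).

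I would then iterate: setting $t_0$ as above and $t_{n+1} = t_0 + \sqrt{2}\, t_n$, the recursion gives $\alpha_n := \rho(\|x\|_B > t_n)$ satisfies $\alpha_{n+1} \leq \tfrac{4}{3}\alpha_n^2$, hence $\tfrac{4}{3}\alpha_n \leq \big(\tfrac{4}{3}\alpha_0\big)^{2^n} \leq 3^{-2^n}$. Because $t_n \sim c \,(\sqrt{2})^n$, we have $2^n \sim c' t_n^2$, so $\alpha_n \lesssim e^{-c'' t_n^2}$; monotonicity of $\alpha(\cdot)$ fills in the intermediate values of $K$, giving $\rho(\|x\|_B > K) \leq e^{-c' K^2}$ for some $c' > 0$. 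Finally, writing
\begin{equation*}
\int_B e^{c\|x\|_B^2} \, \rho(dx)
= 1 + \int_0^\infty 2cK e^{cK^2} \rho(\|x\|_B > K)\, dK,
\end{equation*}
one chooses $c < c'$ and the integral converges, proving the first assertion.

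The only step that requires any care is the rotation-invariance statement in the infinite-dimensional setting: strictly speaking $\rho$ is defined as an extension from cylinder sets in $H$, so one must check that the pushforward of $\rho \otimes \rho$ under the $45^\circ$ rotation agrees with $\rho \otimes \rho$ on cylinder sets in $B \times B$ (where cylinders are defined via $y \in B^*$), and then invoke uniqueness of the countably additive extension. I would expect this to be the main technical point of the argument; the rest of the proof is the purely combinatorial iteration described above.
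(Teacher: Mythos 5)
Your argument is correct: it is precisely Fernique's classical rotation-and-iteration proof, which is the proof given in the source the paper cites (Theorem 3.1 in Kuo's book); the paper itself offers no proof, quoting the result as known. The one point you flag as delicate --- rotation invariance of $\rho\otimes\rho$ under $(x,y)\mapsto\big(\tfrac{x+y}{\sqrt 2},\tfrac{x-y}{\sqrt 2}\big)$, checked on cylinder sets determined by elements of $B^\ast\subset H$ and then extended by uniqueness --- is indeed the only place the Gaussian structure is used, and your handling of it is the standard one.
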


\noindent
Now, define a Banach space $B^{s_2}$ via the norm $\|\cdot\|_{B^{s_2}} = \sup_n \jb{n}^{s_2} | \ft{\phi}(n)|$.
Then, \eqref{tight1} follows once we show that $(i, H^1_0, B^{s_2})$ is an abstract Wiener space.

First, recall that $\jb{n}^{s_2} |\ft{\phi}(n)| \sim \jb{n}^{s_2-1} |f_n(\omega)|$.
Let $X_n (\omega)= |f_n(\omega)|^{\frac{1}{1-s_2}}$.
Then, we have $ \mathbb{E} [X_n ] < \infty$, 
which immediately implies that $\jb{n}^{-1}X_n \to 0$ a.s.
Then, by Egoroff's Theorem, given $\eps > 0$ there exists a set $E \subset \Omega$
with $\rho(E^c) < \eps$ 
such that 
$\jb{n}^{s_2} |\ft{\phi}(n)| \to 0 $ uniformly on $E$.
Then, choose $N_0$ sufficiently large such that 
$\jb{n}^{s_2} |\ft{\phi}(n)| < \eps $ on $E$ for all $|n| \geq N_0$.
This shows that $\rho( \| \mathbb{P}_{\geq N_0} \phi\|_{B^{s_2}}\geq \eps) < \eps$
as desired.

\end{document}